\newcommand{\OriSquare}[5]
{
\begin{xy}
(0,0)="Pos";
"Pos"+(1,0) **@{-}; ?*h!U!/^2pt/{\text{\tiny #5}}, 
"Pos"+(1,1) **@{-}; ?*h!L!/^1pt/{\text{\tiny #2}},
"Pos"+(0,1) **@{-}; ?*h!D!/^1pt/{\text{\tiny #3}},
"Pos" **@{-}; ?*h!R!/^1pt/{\text{\tiny #4}},
"Pos"+(0.5,0.5) *h{#1};
\end{xy}
}
\newcommand{\vmathbb}{\mathbb}
\DeclareMathOperator{\Gal}{Gal}
\DeclareMathOperator{\QQ}{\vmathbb{Q}}
\DeclareMathOperator{\QQq}{\overline{\QQ}}
\DeclareMathOperator{\absGal}{\Gal(\QQq/\QQ)}
\DeclareMathOperator{\CC}{\vmathbb{C}}
\DeclareMathOperator{\RR}{\vmathbb{R}}
\DeclareMathOperator{\ZZ}{\vmathbb{Z}}
\DeclareMathOperator{\NN}{\vmathbb{N}}
\DeclareMathOperator{\Af}{\vmathbb{A}}
\DeclareMathOperator{\Hp}{\vmathbb{H}}
\DeclareMathOperator{\Pe}{\vmathbb{P}^1}
\DeclareMathOperator{\Pek}{\vmathbb{P}^1_k}
\DeclareMathOperator{\PeC}{\vmathbb{P}^1_{\CC}}
\DeclareMathOperator{\crit}{crit}
\DeclareMathOperator{\Stab}{Stab}
\DeclareMathOperator{\Aut}{Aut}
\DeclareMathOperator{\Out}{Out}
\DeclareMathOperator{\Deck}{Deck}
\DeclareMathOperator{\id}{id}
\DeclareMathOperator{\cupdot}{\stackrel{.}{\cup}}
\DeclareMathOperator{\Spec}{Spec}
\DeclareMathOperator{\im}{im}
\DeclareMathOperator{\SL}{SL}
\DeclareMathOperator{\GL}{GL}
\DeclareMathOperator{\PSL}{PSL}
\DeclareMathOperator{\len}{len}
\newenvironment{aenum}%
	{\begin{enumerate}  \setlength{\itemsep}{0em}}%
	{\end{enumerate}}
\theoremstyle{definition}
\newtheorem{defbem}{Definition and Remark}[section]
\newtheorem{defi}[defbem]{Definition}
\newtheorem{bem}[defbem]{Remark}
\newtheorem{prop}[defbem]{Proposition}
\newtheorem{kor}[defbem]{Corollary}
\newtheorem{hilfssatz}[defbem]{Lemma}
\newtheorem{satz}{Theorem}
\newtheorem{bsp}[defbem]{Example}
\title{}
\begin{document}

\begin{center}
  {\large\bf \uppercase{The Galois action on M-Origamis\\ and their Teichmüller curves}}\\[1em]  
  {\uppercase{Florian Nisbach}\\[1em]}
  Karlsruhe Institute of Technology (KIT)\\
  {florian.nisbach@kit.edu\\[1em]}
  \textit{\today}
\end{center}

\begin{abstract}
We consider a rather special class of translation surfaces (called \emph{M-Origamis} in this work) that are obtained from dessins by a construction introduced by Möller in \cite{mm1}. We give a new proof with a more combinatorial flavour of Möller's theorem that $\absGal$ acts faithfully on the Teichmüller curves of M-Origamis and extend his result by investigating the Galois action in greater detail. 

We determine the Strebel directions and corresponding cylinder decompositions of an M-Origami, as well as its  Veech group, which contains the modular group $\Gamma(2)$ and is closely connected to a certain group of symmetries of the underlying dessin. Finally, our calculations allow us to give explicit examples of Galois orbits of M-Origamis and their Teichmüller curves.
\end{abstract}

\maketitle

\tableofcontents

\section{Introduction} 
The absolute Galois group $\absGal$ has been a central object of interest for quite some time. Its appeal is the vast amount of arithmetic information it encodes, which is also an explanation for its tremendously complicated structure. To give one example, the question which isomorphism types of groups appear as finite quotients of $\absGal$ is the inverse Galois problem, a still prospering field of research with wide open problems. A classical review on known results and open questions about $\absGal$ is \cite{neu}.

One approach to understanding the absolute Galois group is to study its actions on objects that are relatively easy to understand. Belyi's theorem \cite{be} (see Section \ref{sec:dessins}) inspired Grothendieck to define a class of such combinatorial objects, the so-called \emph{dessins d'enfants}, on which the absolute Galois group acts faithfully. One way of describing a dessin d'enfant is seeing it as a covering of the complex projective line ramified over three points. In this sense, a related class of objects are \emph{origamis} (sometimes also called \emph{square tiled surfaces}), which can be seen as coverings of an elliptic curve ramified over a single point. They allow an $\SL_2(\RR)$ action which gives rise to constructing so-called \emph{Teichmüller curves} in the corresponding moduli space. These curves turn out to be defined over number fields, so they also carry a Galois action. For some time it was unclear if this action is non-trivial, until Möller proved in \cite{mm1} that it is indeed faithful. To do this, he considered a subset of the set of dessins on which $\absGal$ still acts faithful, made origamis out of them, considered their Teichmüller curves and showed that the Galois actions on all these objects fit together in such a way that the faithfulness does not break along the way.

The goal of this work is to give a more topological or combinatorial view (in terms of the monodromy of coverings) on Möller's construction, which enables us to extend his results and actually give examples of Galois orbits of Teichmüller curves. The structure of this work is as follows:

In Section \ref{sec:topology}, we give a short overview of the topological methods we are going to use and adapt them to our needs. Section \ref{sec:dessins} is an introduction to Belyi theory and dessins d'enfants.

In Section \ref{sec:origamis}, we begin by explaining the analytical and algebro-geometric notions of Teichmüller curves of translation surfaces in general, and of origamis  in particular. We discuss the Galois action on these objects and prove the Galois invariance of certain properties of an origami $O$, such as the index of the Veech group $\Gamma(O)$  in $\SL_2(\ZZ)$ and the isomorphism type of its group of translations $\text{Trans}(O)$. Also, we obtain the maybe surprising relationship
\[[M(O):M(C(O))]\leq[\SL_2(\ZZ):\Gamma(O)],\]
where $M(O)$ and $M(C(O))$ are the fields of moduli of the origami $O$ and its Teichmüller curve, respectively.

The main part of this work is Section \ref{sec:m-ori}, where we begin by explaining Möller's fibre product construction to produce special origamis (which we call \emph{M-Origamis}) out of dessins. We show that we can take this fibre product in the category of topological coverings, which allows us to explicitly express the monodromy of an M-Origami $O_\beta$ in terms of the monodromy of the dessin $\beta$ we start with.
We show that the Veech group of $O_\beta$ lies between the full modular group $\SL_2(\ZZ)$ and $\Gamma(2)$ and exhibit its close relationship to the group $W_\beta\coloneqq \text{Stab}_{\Aut(\PeC\setminus\{0,1,\infty\})}(\beta)$. We go on by determining the cylinder decomposition of $O_\beta$ in terms of the ramification type of $\beta$. These calculations will allow us to reprove Möller's results and refine them, e.g.\ by showing that if for a \emph{Belyi tree} $\beta$ and an automorphism $\sigma\in\absGal$, we have $\beta\ncong \beta^\sigma$, then we also have $O_\beta\ncong O_{\beta^\sigma}=(O_\beta)^\sigma$, and under a certain condition on the field of moduli of $\beta$, their Teichmüller curves are also different.

Finally, the last section is dedicated to giving several examples that we are able to construct with our methods. We explicitly construct Galois orbits of M-Origamis and their Teichmüller curves. Also, we show that every congruence  subgroup of $\SL_2(\ZZ)$ of level $2$ can be realised as the Veech group of an origami. This is interesting in the light of  \cite[Theorem 4]{gs}, where Weitze-Schmithüsen realises all congruence subgroups of $\SL_2(\ZZ)$ as Veech groups of origamis, with the exception of a finite list containing the congruence group of level $2$ and index $2$. 

For more details on a great part of this article's contents, see also the author's PhD thesis \cite{diss}.

%\subsection*{Acknowledgements}
\textbf{Acknowledgements.}\: The author wishes to express his thanks to the advisors of his thesis, Gabriela Weitze-Schmithüsen and Frank Herrlich, as well as to Stefan Kühnlein, for many helpful discussions and suggestions. Furthermore, many thanks go to the authors of the origami software package \cite{ori-lib} that was used to produce the examples in the last section of this work.

\section{Topological preliminaries}\label{sec:topology}
In this section we will recall some basic properties of topological coverings. In particular we will write up formulas for the monodromy of the fibre product and the composition of two coverings in terms of their monodromies, as these seem to be absent in most topology textbooks. Let us start with some definitions and conventions.

Throughout this article, a (topological, unramified) \textit{covering} is understood to be a continous map $p\colon Y\to X$, where $X$ is a path-wise connected, locally path-wise connected, semi-locally simply connected topological space (call these spaces \textit{coverable}), and every point $x\in X$ has a neighbourhood $U_x\ni x$ such that $p^{-1}(U_x)$ is a disjoint union $\coprod_{i\in I}U_i$ such that for all $i\in I$ the restriction $p_{|U_i}:U_i\to U_x$ is a homeomorphism. $U_x$ is then called \textit{admissible neighbourhood} of $x$ with respect to $p$. Sometimes we will lazily drop the specific covering map $p$ and simply write $Y/X$. The well defined cardinality $\deg p\coloneqq|p^{-1}(x)|$ is called \textit{degree} of the covering.

Note that usually, the definition of a covering requires also $Y$ to be path-wise connected. For this situation we will use the term \emph{connected covering}.

As usual, denote the push-forward of paths (or their homotopy classes) by a continous map $f$ by $f_\ast$, i.e.\ $f_\ast(g)\coloneqq f\circ g$. For a continous map of pointed spaces $f\colon (Y,y)\to (X,x)$, this yields a (functorial) group homomorphism $f_\ast\colon \pi_1(Y,y) \to \pi_1(X,x)$, which is injective if $f$ is a covering. In case $f$ is a connected covering  and $f_\ast(\pi_1(Y,y))\subseteq \pi_1(X,x)$ is a normal subgroup, $f$ is called normal or Galois covering. Remember that in this case the factor group is isomorphic to $\text{Deck}(Y/X)$, the group of deck transformations for the covering, i.e.\ the homeomorphisms of $Y$ preserving the fibres of $f$, which is then often called the \emph{Galois group} of $f$.

Recall the well-known path lifting property of coverings: Let $g\colon [0,1]\to X$ be path and $p\colon Y \to X$ a covering, then for every $y\in p^{-1}(g(0))$ there exists a unique lift  of $g$ with $h(0)=y$. Denote this lift by $L_y^p(g)$ and its endpoint by $e_y^p(g)\coloneqq h(1)$.
We compose paths “from right to left”, more precisely: If $\alpha, \beta\in\pi_1(X,x_0)$ are two elements of the fundamental group of a topological space then $\beta\alpha$ shall denote the homotopy class one gets by first passing through a representative of $\alpha$ and then one of $\beta$.

The monodromy of a covering $p\colon Y \to X$ is defined as follows: Fix a basepoint $x_0\in X$ and a numbering $p^{-1}(x_0)=\{y_1,\ldots,y_d\}$ on its fibre. Then the monodromy homomorphism $m_p$ is given by
\[m_p\colon\pi_1(X,\,x_0)\to S_d, \gamma\mapsto (i\mapsto j,\text{ if }y_j=e^p_{y_i}(\gamma)).\]
Note that  $p$ is a connected covering (i.e.\ $Y$ is path-wise connected) iff the image of $m_p$ is a transitive subgroup of $S_d$. Of course, as we drop the requirement of $Y$ being path-wise connected, the well-known Galois correspondence between equivalence classes of coverings and conjugacy classes of subgroups of the fundamental group breaks. Instead, we have the following easy
\begin{prop}\label{prop:coverings} Let $X$ be a coverable space and $n\in\NN$. Then there is a bijection
\begin{align*}
 & \{ X'/X \text{ covering of degree } d \}_{/\text{Fibre preserving homeomorphisms}}\\
\leftrightarrow & \{m\colon \pi_1(X) \to S_d \text{ permutation representation }\}_{/\text{conjugation in } S_d}.
\end{align*}
\end{prop}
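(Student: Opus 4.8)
The plan is to read a degree-$d$ covering as the $\pi_1(X)$-set given by the monodromy action on its fibre, and a homomorphism $m\colon\pi_1(X)\to S_d$ as the $\pi_1(X)$-set structure it defines on $\{1,\dots,d\}$. Under this dictionary, fibre-preserving homeomorphisms over $X$ correspond to isomorphisms of $\pi_1(X)$-sets and conjugation in $S_d$ corresponds to relabelling the underlying set, so the statement becomes a repackaging of the classical theory of covering spaces.

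Concretely, I would first check that $p\mapsto m_p$ is well defined on the indicated equivalence classes. Choosing a different numbering of $p^{-1}(x_0)$ replaces $m_p$ by a conjugate, and a fibre-preserving homeomorphism $\varphi\colon X'\to X''$ over $X$ restricts to a bijection of the fibres over $x_0$ intertwining the two monodromy actions, hence again only conjugates $m_p$. For the inverse, I would use that a coverable $X$ has a universal covering $q\colon\widetilde X\to X$ with $\pi_1(X,x_0)\cong\Deck(\widetilde X/X)$: given $m$, let $\pi_1(X,x_0)$ act on $\widetilde X\times\{1,\dots,d\}$ diagonally, via deck transformations on the first factor and via $m$ on the second, and set $X'_m\coloneqq(\widetilde X\times\{1,\dots,d\})/\pi_1(X,x_0)$ with $p_m\colon X'_m\to X$ induced by $q$ on the first factor. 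Over an admissible, simply connected neighbourhood $U$ one has $q^{-1}(U)\cong U\times\pi_1(X,x_0)$ equivariantly, whence $p_m^{-1}(U)\cong U\times\{1,\dots,d\}$, so $p_m$ is a covering of degree $d$; and unwinding the definitions shows that its monodromy, with respect to the numbering of $p_m^{-1}(x_0)$ coming from $\{1,\dots,d\}$, is $m$ (modulo the standard care with left versus right actions forced by our path-composition convention, which does not affect the statement). Thus $[m]\mapsto[p_m]$ is a section of $[p]\mapsto[m_p]$.

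It then remains to show that $[p]\mapsto[m_p]$ is injective, i.e.\ that every covering $p\colon X'\to X$ is homeomorphic over $X$ to $X'_{m_p}$. Here I would decompose $X'$ into its (open, path-connected) components: each component is a connected covering of $X$ and, under the classical Galois correspondence, is classified by the conjugacy class of $p_\ast\pi_1$ of that component, which is exactly the stabiliser in $\pi_1(X,x_0)$ of a point of the corresponding orbit of the monodromy action on $p^{-1}(x_0)$. Matching components with orbits and invoking the classical equivalence of connected coverings on each piece assembles the desired homeomorphism over $X$, and the two maps are then mutually inverse. The only non-formal ingredient is this last step — it is where the existence of the universal cover and the classification of connected coverings are genuinely used; everything else is bookkeeping with fibres, numberings and orbit decompositions, which I expect to be routine.
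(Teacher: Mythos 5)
Your argument is correct; note that the paper itself gives no proof of this proposition, stating it as an easy consequence of classical covering space theory, and your write-up (monodromy on the fibre, the associated covering $(\widetilde X\times\{1,\dots,d\})/\pi_1(X,x_0)$ built from the universal cover, and component-by-component reduction to the classical correspondence for connected coverings) is exactly the standard argument one would supply. The only small imprecision is the phrase ``admissible, simply connected neighbourhood'': semi-local simple connectivity does not provide simply connected neighbourhoods in general, but you only need neighbourhoods evenly covered by the universal covering $q$, which do exist for a coverable $X$, so the local triviality of $p_m$ goes through unchanged.
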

Now let us turn to fibre products of coverings. Recall that for two continous maps $f\colon A\to X,\, g\colon B \to X$, the \textit{fibre product} $A\times_X B$ is given by
\[A\times_X B\coloneqq \left\{(a,\,b)\in A\times B\colon f(a)=g(b)\right\}\]
endowed with the subspace topology of the product. Consequently, the projections $p_A\colon A\times_X B\to A, (a,b)\mapsto a$ and $p_B\colon A\times_X B\to B, (a,b)\mapsto b$ are continuous. It is easy to see that if $f$ and $g$ are covering maps, so is $f\circ p_A=g\circ p_B\colon A\times_X B\to X$, and so are $p_A$ and $p_B$ if $A$ and $B$ are path-wise connected, respectively.

Note that, even if both $A$ and $B$ are path-wise connected, the fibre product $A\times_X B$ need not be. In fact, the following proposition will show that, if $f\colon A\to X$ is a degree $d$ covering, then $A\times_X A$ is the coproduct of $d$ copies of $A$.

From the proposition above we know that a covering is uniquely determined by its monodromy. So one should be able to write down a formula for the monodromy of the fibre product of two coverings in terms of their respective monodromies. The following proposition gives an answer.

\begin{prop}\label{satz:fprod}
Let $X$ be a coverable space, $f\colon A\to X,\, g\colon B\to X$ coverings of degree $d$ and $d'$, respectively, with given monodromy maps $m_f$ resp. $m_g$. Then, we have for the fibre product $A\times_X B$:
\begin{aenum}
\item For each path-wise connected component $A_i\subseteq A$, the restriction 
\[{p_A}_{|p_A^{-1}(A_i)}\colon p_A^{-1}(A_i)=A_i\times_X B \to A_i\]
is a covering of degree $d'$ with monodromy
\[m_g\circ(f_{|A_i})_\ast.\]
\item The map $f\circ p_A=g\circ p_B \colon A\times_X B\to X$ is a covering of degree $dd'$ with monodromy
\[m_f\times m_g\colon  \pi_1(X,\,x_0)\to S_d\times S_{d'}\subseteq S_{dd'}, \gamma\mapsto \left((k,\,l)\mapsto (m_f(k),\,m_g(l))\right),\]
where $(k,\,l)\in\{1,\ldots,\,d\}\times\{1,\ldots,\,d'\}$.
\end{aenum}
\end{prop}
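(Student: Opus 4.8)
The plan is to prove both parts with the same simple device: a lift of a path to a fibre product is precisely a pair of lifts, one to each factor, so the monodromy computation reduces to the given monodromies $m_f$, $m_g$ together with the uniqueness of path lifting.

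For part (a), note first that $p_A^{-1}(A_i)=A_i\times_X B$ and that $p_A|_{A_i\times_X B}\colon A_i\times_X B\to A_i$ is the pullback of $g$ along $f|_{A_i}\colon A_i\to X$; since $A_i$ is path-wise connected (hence coverable, being a covering space of $X$), it is a covering by the discussion preceding the proposition. For $a\in A_i$ the fibre is $\{a\}\times g^{-1}(f(a))$, in bijection with $g^{-1}(f(a))$, so the degree is $d'$. Now choose a basepoint $a_0\in A_i$ with $f(a_0)=x_0$ — possible because the covering $f|_{A_i}$ is onto $X$ — and identify $\{a_0\}\times g^{-1}(x_0)$ with $\{1,\dots,d'\}$ via the numbering underlying $m_g$. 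For a loop $\gamma$ in $A_i$ based at $a_0$ and a point $b\in g^{-1}(x_0)$, the path $t\mapsto\bigl(\gamma(t),\,L_b^g(f\circ\gamma)(t)\bigr)$ is continuous, takes values in $A_i\times_X B$ since $f(\gamma(t))=g\bigl(L_b^g(f\circ\gamma)(t)\bigr)$, and projects to $\gamma$; by uniqueness of lifts it is the lift of $\gamma$ starting at $(a_0,b)$, so its endpoint is $\bigl(a_0,\,e_b^g(f\circ\gamma)\bigr)$. Hence the monodromy permutation of $\gamma$ is $m_g\bigl((f|_{A_i})_\ast\gamma\bigr)$, which is the asserted $m_g\circ(f|_{A_i})_\ast$.

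For part (b), $\pi\coloneqq f\circ p_A=g\circ p_B$ is a covering by the same discussion, with fibre $\pi^{-1}(x_0)=f^{-1}(x_0)\times g^{-1}(x_0)$ of cardinality $dd'$; number it by pairs $(k,l)\in\{1,\dots,d\}\times\{1,\dots,d'\}$ using the numberings of $f^{-1}(x_0)=\{a_1,\dots,a_d\}$ and $g^{-1}(x_0)=\{b_1,\dots,b_{d'}\}$ underlying $m_f$ and $m_g$. For a loop $\gamma$ in $X$ based at $x_0$ and indices $(k,l)$, the path $t\mapsto\bigl(L_{a_k}^f(\gamma)(t),\,L_{b_l}^g(\gamma)(t)\bigr)$ is, by the same reasoning, the unique lift of $\gamma$ starting at $(a_k,b_l)$, with endpoint $\bigl(e_{a_k}^f(\gamma),\,e_{b_l}^g(\gamma)\bigr)=\bigl(a_{m_f(\gamma)(k)},\,b_{m_g(\gamma)(l)}\bigr)$. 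So the monodromy of $\pi$ sends $\gamma$ to the permutation $(k,l)\mapsto\bigl(m_f(\gamma)(k),\,m_g(\gamma)(l)\bigr)$, which is exactly $m_f\times m_g$.

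The argument is essentially routine; the only point that needs care is keeping the two fibre numberings and the basepoint choices mutually consistent. It is worth stressing — as the phrasing of the proposition already does — that $A\times_X B$ need not be connected, which is precisely why in (a) one must work one component $A_i$ at a time, and why in (b) the monodromy lands in the generally intransitive subgroup $S_d\times S_{d'}$ rather than all of $S_{dd'}$.
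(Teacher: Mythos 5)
Your proposal is correct and follows essentially the same route as the paper: both arguments come down to the uniqueness of path lifting, identifying lifts into the fibre product with pairs of lifts into the factors (you build the lift upstairs as a pair, while the paper lifts upstairs and projects down — the same computation read in the other direction). Your bookkeeping of basepoints and fibre numberings matches the paper's choice $p_B(c_i)=b_i$, so there is nothing to add.
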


\begin{proof} We omit the straightforward proof that ${p_A}_{|p_A^{-1}(A_i)}$ and $f\circ p_A=g\circ p_B$ are coverings of the claimed degrees. For part a), let $A$ w.l.o.g.\ be path-wise connected, i.e.\ $A_i=A$.
Let us now calculate the monodromy of $p_A$. So, choose base points $x_0\in X$ and $a_0\in f^{-1}(x_0)$. Let $g^{-1}(x_0)=\{b_1,\ldots,\,b_{d'}\}$ be the fibre over $x_0$, and $p_A^{-1}(a_0)=\{c_1,\ldots,\,c_{d'}\}$ be the fibre over $a_0$, the numbering on the latter chosen such that $p_B(c_i)=b_i$.

Now, take a closed path $\gamma\colon [0,\,1]\to A$ with $\gamma(0)=\gamma(1)=a_0$. Let $i\in\{1,\ldots,\,d'\}$, and let $\tilde{\gamma}=L_{c_i}^{p_A}(\gamma)$ be the lift of $\gamma$ starting in $c_i$. Assume that $\tilde{\gamma}(1)=c_j$.

Consider now the path $\delta=f\circ\gamma$. It is a closed loop starting in $x_0$. Let $\tilde{\delta}=L_{b_i}^g(\delta)$ be its lift starting in $b_i$, then we have, because of the uniqueness of the lift and the commutativity of the diagram: $\tilde{\delta}=p_B\circ\tilde{\gamma}$, so particularly, as we asserted $p_B(c_i)=b_i$ for all $i$, we have $\tilde{\delta}(1)=b_j$.

So indeed, we have shown $m_{p_A}(i)=(m_g\circ f_\ast)(i)$.

For part b), let $\gamma \in \pi_1(X,x_0),\, c_{ij}\in (f\circ p_A)^{-1}(x_0)$. Further let $e_{c_{ij}}^{f\circ p_A}(\gamma)=c_{kl}$.
Then we have $e_{a_i}^f(\gamma)=p_A(c_{kl})=a_k$ and $e_{b_j}^g(\gamma)=p_B(c_{kl})=b_l$. This completes the proof.
\end{proof}

Now, let $f\colon Y\to X$ and $g\colon Z\to Y$ be coverings of degrees $d$ and $d'$, respectively (so in particular $Y$ is path-wise connected). It is straightforward to see that $f\circ g$ is a covering of degree $dd'$. Like above, we are interested in a formula for $m_{f\circ g}$ in terms of $m_f$ and $m_g$.

Let $x_0\in X,\, f^{-1}(x_0)=\{y_1,\ldots,y_d\},\, g^{-1}(y_i)=\{z_{i1},\ldots, z_{id'}\}$. The fundamental group of $X$ is denoted by $\Gamma\coloneqq \pi_1(X,x_0)$, the given mono\-dromy map by $m_f\colon \Gamma\to S_d$. Fix the notation 
\[\Gamma_1\coloneqq m_f^{-1}(\Stab(1))=\{\gamma\in\Gamma\colon  m_f(\gamma)(1)=1\}.\]
So, if we choose $y_1$ as a base point of $Y$ and set $\Gamma'\coloneqq \pi_1(Y,y_1)$, then we have $f_\ast(\Gamma')=\Gamma_1.$ Denote, as usual, the monodromy map of the covering $g$ by $m_g\colon \Gamma'\to S_{d'}$.

\begin{prop}\label{satz:comp}
In the situation described above, let $\gamma_i,\, i=1,\ldots,d$, be right coset representatives of  $\Gamma_1$ in $\Gamma$, with $\gamma_1=1$, such that $e_{y_i}^f(\gamma_i)=y_1$. So, we have $\Gamma=\:\stackrel{\cdot}{\bigcup}\Gamma_1\cdot\gamma_i$.

Then, we have:
\[m_{f\circ g}(\gamma)(i,\, j)=\left(m_f(\gamma)(i), m_g(c_i(\gamma))(j)\right)\]
Here, we denote $c_i(\gamma)\coloneqq (f_\ast)^{-1}(\gamma_k\gamma\gamma_i^{-1}),\, k\coloneqq m_f(\gamma)(i)$
\end{prop}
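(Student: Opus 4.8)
Proof proposal.

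The plan is to follow the unique lift of the loop $\gamma$ through the composite covering $f\circ g$ and read off the two coordinates of its endpoint separately: the first from $f$, the second by comparing with a suitable loop in $Y$ based at $y_1$. Before anything else I would pin down the numbering of the fibres $g^{-1}(y_i)$ that makes the formula literally correct: with $g^{-1}(y_1)=\{z_{11},\dots,z_{1d'}\}$ numbered as in the definition of $m_g$, write $\tilde\gamma_i := L^f_{y_i}(\gamma_i)$ for the lift of the coset representative $\gamma_i$ starting at $y_i$, which by hypothesis runs from $y_i$ to $y_1$, and decree that $z_{ij}$ is the endpoint of the $g$-lift of the reversed path $\tilde\gamma_i^{-1}$ starting at $z_{1j}$ (for $i=1$ this is consistent, since $\gamma_1=1$).

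A preliminary step is to check that $c_i(\gamma)$ is well defined. Put $k := m_f(\gamma)(i)$. Since $m_f$ is a homomorphism into $S_d$ and $m_f(\gamma_i)(i)=1$, $m_f(\gamma_k)(k)=1$ by the choice of the $\gamma$'s, one computes $m_f(\gamma_k\gamma\gamma_i^{-1})(1)=m_f(\gamma_k)\bigl(m_f(\gamma)\bigl(m_f(\gamma_i)^{-1}(1)\bigr)\bigr)=m_f(\gamma_k)(m_f(\gamma)(i))=m_f(\gamma_k)(k)=1$, so $\gamma_k\gamma\gamma_i^{-1}\in\Gamma_1=f_\ast(\Gamma')$; as $f_\ast$ is injective, $c_i(\gamma)=(f_\ast)^{-1}(\gamma_k\gamma\gamma_i^{-1})\in\Gamma'$ makes sense.

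Next, the first coordinate. Let $\hat\gamma := L^{f\circ g}_{z_{ij}}(\gamma)$. Then $g\circ\hat\gamma$ is a lift of $\gamma$ through $f$ starting at $g(z_{ij})=y_i$, so by uniqueness $g\circ\hat\gamma=L^f_{y_i}(\gamma)=:\tilde\gamma^f$, which ends at $y_k$ with $k=m_f(\gamma)(i)$; hence $\hat\gamma(1)\in g^{-1}(y_k)$, say $\hat\gamma(1)=z_{kl}$. Uniqueness of lifts also identifies $\hat\gamma$ with $L^g_{z_{ij}}(\tilde\gamma^f)$, i.e.\ $l$ is obtained by transporting $z_{ij}$ along $\tilde\gamma^f$. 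To recognise this $l$ as $m_g(c_i(\gamma))(j)$ I would form the loop $\delta := \tilde\gamma_k\cdot\tilde\gamma^f\cdot\tilde\gamma_i^{-1}$ at $y_1$ --- the endpoints fit together, $y_1\rightsquigarrow y_i\rightsquigarrow y_k\rightsquigarrow y_1$ --- observe that $f\circ\delta=\gamma_k\cdot\gamma\cdot\gamma_i^{-1}$, so $f_\ast([\delta])=\gamma_k\gamma\gamma_i^{-1}$ and therefore $[\delta]=c_i(\gamma)$; then lift $\delta$ through $g$ starting at $z_{1j}$ in three pieces (right to left): $\tilde\gamma_i^{-1}$ takes $z_{1j}$ to $z_{ij}$ by our numbering convention, $\tilde\gamma^f$ takes $z_{ij}$ to $z_{kl}$ by the previous sentence, and $\tilde\gamma_k$ takes $z_{kl}$ back to $z_{1l}$ --- again the numbering convention, now read with indices $k$ and $l$, reversed. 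Thus $m_g(c_i(\gamma))(j)=m_g([\delta])(j)=l$, and combining with the first coordinate gives $m_{f\circ g}(\gamma)(i,j)=(m_f(\gamma)(i),\,m_g(c_i(\gamma))(j))$.

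Essentially every individual step is an instance of ``path lifts are unique, and the image of a lift under a further covering map is again a lift''. The real work --- and the point I would expect to be the main obstacle --- is purely bookkeeping: keeping the right-to-left composition convention straight (so that $m_f,m_g$ are genuine homomorphisms and $f\circ\delta$ assembles in the stated order), and, above all, committing to the correct numbering of the fibres $g^{-1}(y_i)$ for $i\neq 1$, since with an arbitrary numbering the clean formula fails and the element $c_i(\gamma)$ loses its meaning.
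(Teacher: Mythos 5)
Your proposal is correct and follows essentially the same route as the paper's proof: identify the first coordinate via $g\circ\hat\gamma=L^f_{y_i}(\gamma)$, then decompose the relevant path using the lifts of the coset representatives and the loop $c_i(\gamma)=(f_\ast)^{-1}(\gamma_k\gamma\gamma_i^{-1})$ at $y_1$, together with the normalised numbering of the fibres $g^{-1}(y_i)$ (which the paper introduces by its ``w.l.o.g.\ renumber the $z_{ij}$'' step and you fix explicitly). Your extra check that $\gamma_k\gamma\gamma_i^{-1}\in\Gamma_1$, so that $c_i(\gamma)$ is well defined, is a welcome explicit justification of a point the paper uses implicitly.
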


\begin{proof}
Let $\gamma\in\pi_1(X, x_0)$, $\alpha\coloneqq L_{z_{ij}}^{f\circ g}(\gamma)$, and further let $\alpha(1)=e_{z_{ij}}^{f\circ g}(\gamma)\eqqcolon z_{i'j'}$. We have to determine $(i',\,j')$.

The path $\beta\coloneqq L_{y_i}^f(\gamma)=g\circ\alpha$ has endpoint $\beta(1)=y_{m_f(\gamma)(i)}$. In particular, we have $i'=m_f(\gamma)(i)$.

Now,  let us determine $j'$. So let $\beta_\nu\coloneqq L_{y_\nu}^f(\gamma_\nu)$ be liftings (for $\nu=1,\ldots,\, d$). Remember that by our choice of the numbering of the $\gamma_\nu$, we have $\beta_\nu(1)=e_{y_\nu}^f(\gamma_\nu)=y_1$. Using the notation $k\coloneqq m_f(\gamma)(i)$, we can write $\beta=\beta_k^{-1}\tilde{\beta}\beta_i$ with unique $\tilde{\beta}\in\pi_1(Y, y_1)$. Indeed, we have: $\tilde{\beta}=\beta_k\beta\beta_i^{-1}=L_{y_1}^f(\gamma_k\gamma\gamma_i^{-1})$.

W.l.o.g. we have that the lifting $\alpha_{ij}\coloneqq L_{z_{ij}}^g(\beta_i)$ has endpoint $z_{1j}$, as we have chosen $\gamma_1=1$, and for $i\neq 1$ we can renumber the $z_{ij}, j=1,\ldots,d'$.

Denote $\tilde{\alpha}\coloneqq L_{z_{1j}}^g(\tilde{\beta})$ and $l\coloneqq m_g(\tilde{\beta}(j))$, then we have $\alpha=\alpha_{kl}^{-1}\tilde{\alpha}\alpha_{ij}$, and because of $\tilde{\alpha}(1)=z_{1\, m_g(\tilde{\beta}(j))}$ we get:
\[z_{i'j'}=e_{z_{ij}}^{f\circ g}(\gamma)=\alpha(1)=\left(\alpha_{kl}^{-1}\tilde{\alpha}\alpha_{ij}\right)(1)=z_{kl}\]
So finally, $j'=l=m_g(\tilde{\beta})(j)=m_g(\beta_k\beta\beta_i^{-1})(j)=m_g(c_i(\gamma))(j)$.
\end{proof}

Before we move on, let us state a lemma on normal coverings, which will become handy later on and which the author has learned from Stefan Kühnlein.

\begin{hilfssatz}\label{hilfssatz:decktrafo} Let $f\colon X\to Y,\,f'\colon X'\to Y$ be connected coverings and $g\colon Y\to Z$ be a normal (so in particular connected) covering, and let $Z$ be a Hausdorff space. If $g\circ f\cong g\circ f'$, i.e.\ there is a homeomorphism $\varphi\colon X\to X'$ with $g\circ f=g\circ f'\circ \varphi$, then there is a deck transformation $\psi\in \Deck(g)$ such that $\psi\circ f=f'\circ \varphi$.
\end{hilfssatz}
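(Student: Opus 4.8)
The plan is to produce $\psi$ by prescribing its value at a single point and then to identify $\psi\circ f$ and $f'\circ\varphi$ as two lifts of one and the same map, which must therefore coincide. To set this up I would fix a base point $x_0\in X$ (possible since $X$, being the total space of a connected covering, is path-wise connected) and put $y_0\coloneqq f(x_0)$, $y_0'\coloneqq f'(\varphi(x_0))$ and $z_0\coloneqq g(y_0)$. The hypothesis $g\circ f=g\circ f'\circ\varphi$ gives $g(y_0)=g(f(x_0))=g(f'(\varphi(x_0)))=g(y_0')$, so $y_0$ and $y_0'$ lie in the common fibre $g^{-1}(z_0)$. Since $g$ is a \emph{normal} covering, its group of deck transformations acts transitively on the fibres of $g$, so there is a (unique) $\psi\in\Deck(g)$ with $\psi(y_0)=y_0'$; this $\psi$ is the candidate.

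It then remains to check that $\psi\circ f=f'\circ\varphi$, both of which are continuous maps $X\to Y$. Post-composing with $g$ yields $g\circ(\psi\circ f)=g\circ f$ (because $\psi\in\Deck(g)$, i.e.\ $g\circ\psi=g$) and $g\circ(f'\circ\varphi)=g\circ f$ (the hypothesis), so $\psi\circ f$ and $f'\circ\varphi$ are both lifts of the single map $g\circ f\colon X\to Z$ along the covering $g$; they agree at $x_0$ by construction, since $(\psi\circ f)(x_0)=\psi(y_0)=y_0'=(f'\circ\varphi)(x_0)$. As $X$ is connected, the uniqueness of lifts of a map out of a connected space along a covering then forces $\psi\circ f=f'\circ\varphi$, which is the assertion.

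The argument is short and there is no genuine obstacle; the care needed lies only in invoking the two standard inputs in the exact form the paper's conventions demand. The first is that the deck group of a normal covering acts transitively on its fibres — this is where the normality hypothesis is spent (equivalently, $g_\ast\pi_1(Y,y_0)$, being normal, equals all of its conjugates, so every point of $g^{-1}(z_0)$ is the image of $y_0$ under some deck transformation). The second is the uniqueness of lifts: the set $\{x\in X:\psi(f(x))=f'(\varphi(x))\}$ is open and closed by the local sheet structure of $g$ and is non-empty, hence all of $X$ because $X$ is connected, the Hausdorff hypothesis on $Z$ (inherited by $Y$) keeping us safely in the classical situation. Note that connectedness of $X'$ plays no role.
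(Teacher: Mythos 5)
Your proposal is correct and follows essentially the same route as the paper: construct $\psi$ via transitivity of $\Deck(g)$ on the fibre containing $f(x_0)$ and $f'(\varphi(x_0))$ (this is exactly where normality is used), then conclude equality on all of connected $X$. The only cosmetic difference is that you invoke the standard uniqueness-of-lifts lemma for the map $g\circ f$, whereas the paper proves that step by hand via the open-and-closed agreement set $A=\{a\in X\mid\psi(f(a))=f'(\varphi(a))\}$ and admissible neighbourhoods.
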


\begin{center}
\begin{tikzpicture}[description/.style={fill=white,inner sep=2pt}]
\matrix (m) [matrix of math nodes, row sep=1.5em,
column sep=1.25em, text height=1.5ex, text depth=0.25ex]
{ X & & X'\\
\\
Y && Y\\
&Z\\
};
\path[->,font=\scriptsize]
(m-1-1) edge node[auto] {$\varphi$} (m-1-3)
        edge node[auto,swap] {$f$} (m-3-1)
(m-1-3) edge node[auto] {$f'$} (m-3-3)
(m-3-1) edge[dotted] node[auto] {$\psi$} (m-3-3)
        edge node[auto,swap] {$g$} (m-4-2)
(m-3-3) edge node[auto] {$g$} (m-4-2);

\end{tikzpicture}
\end{center}

\begin{proof}
Choose $z\in Z,\,y\in g^{-1}(z),\,x\in f^{-1}(y)$. If we denote $x'\coloneqq \varphi(x), y'\coloneqq f'(x')$, then by hypothesis $y'\in g^{-1}(z)$. So by normality of $g$, there is a deck transformation $\psi\in\Deck(g)$ such that $\psi(y)=y'$ (which is even unique). Of course, $\psi\circ f(x)=f'\circ\varphi(x)$, and we claim now that we have $\psi\circ f=f'\circ\varphi$ globally.

Consider the set $A\coloneqq \{a\in X\mid\psi\circ f(a)=f'\circ\varphi(a)\}$. Clearly $A\neq\emptyset$ because $x\in A$. Also, it is closed in $X$ because all the spaces are Hausdorff. We want to show now that $A$ is also open. Because $X$ is connected, this implies $A=X$ and finishes the proof.

So let $a\in A$, and let $g(f(a))\in U\subseteq Z$ be an admissible neighbourhood for both $g\circ f$ and $g\circ f'$ (and so particularly for $g$). Furthermore let $V\subseteq g^{-1}(U)$ be the connected component containing $f(a)$, and $W\subseteq f^{-1}(V)$ the one containing $a$. Denote $V'\coloneqq\psi(V)$, and by $W'$ denote the connected component of $f'^{-1}(V')$ containing $\varphi(a)$. As it is  not clear by hypothesis that $W'=W''\coloneqq\varphi(W)$, set $\tilde{W}'\coloneqq W'\cap W''$, which is still an open neighbourhood of $\varphi(a)$, and adjust the other neighbourhoods in the following way:
\[\tilde{W}\coloneqq\varphi^{-1}(\tilde{W}')\subseteq W,\:\:\tilde{V}\coloneqq f(\tilde{W}),\:\:\tilde{V}'\coloneqq f'(\tilde{W}').\]
Note that we still have $a\in\tilde{W}$, that all these sets are still open, that $\tilde{U}\coloneqq g(\tilde{V})=g(\tilde{V}')$, and that the latter is still an admissible neighbourhood for $g\circ f$ and $g\circ f'$. By construction, by restricting all the maps to these neighbourhoods we get a commutative pentagon of homeomorphisms, so in particular $(\psi\circ f)_{|\tilde{W}}=(f'\circ\varphi)_{|\tilde{W}}$, which finishes the proof.
\end{proof}

\section{Dessins d'enfants and Belyi's theorem}\label{sec:dessins}

Let us now establish the basic theory of \emph{dessins d'enfants}. First, we give their definition and explain Grothendieck's equivalence to \emph{Belyi pairs}. Then, we state Belyi's famous theorem to establish the action of $\absGal$ on dessins and introduce the notions of fields of moduli and fields of definitions of the appearing objects.

We will mainly use the language of schemes in the context of algebraic curves, which seems to be the natural viewpoint here in the eyes of the author. Our notation will stay within bounds of the ones in the beautiful works \cite{wo}, \cite{schneps} and \cite{koe}, in which the curious reader will find many of the details omitted here.
\subsection{Dessins and Belyi morphisms}
\begin{defi}\label{def:dessin}
\begin{aenum}
\item A \emph{dessin d'enfant} (or \textit{Grothendieck dessin}, or \textit{children's drawing}) of degree $d$ is a tuple $(B,W,G,S)$, consisting of:
\begin{itemize}
\item A compact oriented connected real $2$-dimensional (topological) manifold $S$,
\item two finite disjoint subsets $B,W\subset S$ (called the \emph{black} and \emph{white vertices}),
\item an embedded graph $G\subset S$ with vertex set $V(G)=B\cupdot W$ which is bipartite with respect to that partition of $V(G)$, such that $S\setminus G$ is homeomorphic to a finite disjoint union of open discs (called the \emph{cells} of the dessin, and such that $|\pi_0(G\setminus(B\cup W))|=d$.
\end{itemize}
\item An isomorphism between two dessins $D\coloneqq (B,W,G,S)$ and $D'\coloneqq (B',W',G',S')$ is an orientation preserving homeomorphism $f\colon S\to S'$, such that
\[f(B)=B',\,f(W)=W',\text{ and }f(G)=G'.\]
\item By $\Aut(D)$ we denote the group of automorphisms of $D$, i.e.\ the group of isomorphisms between $D$ and itself.
\end{aenum}
\end{defi}

So, from a na\"ive point of view, a dessin is given by drawing several black and white dots on a surface and connecting them in such a manner by edges that the cells which are bounded by these edges are simply connected. The starting point of the theory of dessins d'enfants is that there are astonishingly many ways of giving the data of a dessin up to isomorphism. In the following proposition, we will survey several of these equivalences.

\begin{prop}\label{prop:dessin-equiv}
Giving a dessin in the above sense up to isomorphism is equivalent to giving each of the following data:
\begin{aenum}
\item A finite topological covering $\beta\colon  X^*\to \PeC\setminus\{0,\,1,\,\infty\}$ of degree $d$ up to equivalence of coverings.
\item A conjugacy class of a subgroup $G\leq \pi_1(\PeC\setminus\{0,\,1,\,\infty\})$ of index $d$.
\item A pair of permutations $(p_x,\,p_y)\in S_d^2$, such that $\langle p_x,\,p_y\rangle\leq S_d$ is a transitive subgroup, up to simultaneous conjugation in $S_d$.
\item A non-constant holomorphic map $\beta\colon  X\to \PeC$ of degree $d$, where $X$ is a compact Riemann surface and $\beta$ is ramified at most over the set $\{0,\,1,\,\infty\}$, up to fibre preserving biholomorphic maps.
\item A non-constant morphism $\beta\colon  X\to \PeC$ of degree $d$, where $X$ is a non-singular connected projective curve over $\CC$ and $\beta$ is ramified at most over the set $\{0,\,1,\,\infty\}$, up to fibre preserving $\CC$-scheme isomorphisms. Such a morphism is called a \emph{Belyi morphism} or \emph{Belyi pair}.
\end{aenum}
\end{prop}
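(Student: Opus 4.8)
The plan is to prove the proposition as a chain of well-known equivalences, citing the standard references \cite{wo,schneps,koe} for the links that are "off the shelf'' and spelling out only enough of the dictionaries to see that the combinatorial data of Definition \ref{def:dessin} is really matched to the analytic and algebraic pictures. Concretely I would establish (dessin) $\Leftrightarrow$ (c) $\Leftrightarrow$ (b) $\Leftrightarrow$ (a) $\Leftrightarrow$ (d) $\Leftrightarrow$ (e).

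For (dessin) $\Leftrightarrow$ (c) I would use the rotation system (ribbon-graph structure) carried by an embedded graph on an oriented surface: the orientation of $S$ induces a cyclic ordering of the edges meeting each vertex, so letting $p_x$ be the permutation of the edge set $E(G)$ that rotates edges around the black vertices and $p_y$ the one rotating around the white vertices gives a pair $(p_x,p_y)\in S_d^2$ (note $|E(G)|=|\pi_0(G\setminus(B\cup W))|=d$), well defined up to the relabelling of $E(G)$, i.e.\ up to simultaneous conjugation. Connectedness of $G$ (equivalently of $S$) is exactly transitivity of $\langle p_x,p_y\rangle$, and the cells of the dessin correspond to the cycles of the product $p_xp_y$; conversely, from such a pair one reconstructs $S$ by gluing a disc to each face-cycle, producing a closed oriented surface in which the graph is embedded with simply connected complement, so the disc condition in Definition \ref{def:dessin} is automatic on this side. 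One checks routinely that an isomorphism of dessins is the same as a simultaneous conjugation.

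The middle links are standard covering-space theory. The thrice-punctured sphere $\PeC\setminus\{0,1,\infty\}$ is coverable, and its fundamental group is free of rank two, generated by loops $x,y$ around $0$ and $1$ (with $xy$ a loop around $\infty$); choosing such generators identifies a transitive permutation representation of $\pi_1$ up to conjugation with a pair $(p_x,p_y)=(m(x),m(y))$ generating a transitive subgroup up to simultaneous conjugation, and with a conjugacy class of index-$d$ subgroups via point stabilizers, giving (c) $\Leftrightarrow$ (b). For (b) $\Leftrightarrow$ (a) I would invoke the Galois correspondence for coverings: Proposition \ref{prop:coverings} identifies degree-$d$ coverings up to fibre-preserving homeomorphism with permutation representations up to conjugation, and the connected ones — those with transitive monodromy, matching the connectedness of a dessin's covering — are exactly those coming from conjugacy classes of index-$d$ subgroups. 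Under this chain the analytic incarnation of a dessin has black vertices $\beta^{-1}(0)$, white vertices $\beta^{-1}(1)$, graph $\beta^{-1}([0,1])$ and one marked point of $\beta^{-1}(\infty)$ in each face, and its rotation system reproduces the $(p_x,p_y)$ above; recording this compatibility is what makes the whole cycle consistent.

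Finally, (a) $\Leftrightarrow$ (d) is the Riemann existence theorem: a finite covering of $\PeC\setminus\{0,1,\infty\}$ extends, uniquely up to fibre-preserving biholomorphism, to a non-constant holomorphic map from a compact Riemann surface $X$ to $\PeC$ ramified at most over $\{0,1,\infty\}$, and deleting the fibres over $\{0,1,\infty\}$ is the inverse construction; and (d) $\Leftrightarrow$ (e) is the classical equivalence of categories between compact Riemann surfaces with non-constant holomorphic maps and smooth connected projective curves over $\CC$ with non-constant morphisms (Riemann's algebraization theorem, or GAGA), which respects the degree and the ramification locus together with its image in $\{0,1,\infty\}$. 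I expect the only genuinely fussy part to be the first link: verifying that the ribbon-graph reconstruction of $S$ yields the disc condition and — more to the point — that isomorphism of dessins, equivalence of coverings, conjugacy of subgroups, and simultaneous conjugacy of permutation pairs all say the same thing; the remaining equivalences are quoted theorems.
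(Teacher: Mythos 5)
Your chain of equivalences (dessin $\leftrightarrow$ c $\leftrightarrow$ b $\leftrightarrow$ a $\leftrightarrow$ d $\leftrightarrow$ e) is exactly the paper's: the combinatorial link via rotation systems is what the paper delegates to Malgoire--Voisin and Jones--Singerman, the middle links rest on Proposition \ref{prop:coverings} and the free rank-two fundamental group, and the last two are the Riemann existence theorem and GAGA, just as in the paper's sketch. The proposal is correct; the only difference is that you spell out the ribbon-graph dictionary that the paper cites, which is a matter of exposition rather than of method.
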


\begin{proof}[Sketch of a proof]
The crucial point is the equivalence between an isomorphism class of dessins in the sense of the definition, and a conjugacy class of a pair of permutations as in c). It is shown by C.\ Voisin and J.\ Malgoire in \cite{mv}, and, in a slight variation, by G.\ Jones and D.\ Singerman in \cite[§ 3]{js}.

The equivalence of a), b) and c) is a simple consequence of Proposition \ref{prop:coverings}, as the fundamental group of $\PeC\setminus\{0,\,1,\,\infty\}$ is free in two generators $x$ and $y$ and the two permutations $p_x$ and $p_y$ describe their images under the monodromy map.

The equivalence between a) and d) is well-known in the theory of Riemann surfaces: The complex structure on $\PeC$ is unique, and every topological covering of $\PeC$ minus a finite set gives rise to a unique holomorphic covering by pulling back the complex structure, which can be uniquely compactified by Riemann's theorem on removing singularities.

Finally, the equivalence between d) and e) follows from the well-known GAGA principle first stated in \cite{gaga}.
\end{proof}

Let us illustrate these equivalences a little more: First, note that reconstructing a dessin in the sense of  Definition \ref{def:dessin} from d) can be understood in the following explicit way: As $S$, we take of course the Riemann surface $X$, as $B$ and $W$ we take the preimages of $0$ and $1$, respectively, and for the edges of $G$ we take the preimages of the open interval $(0,1)\subset\PeC$. Then, $S\setminus G$ is the preimage of the set $\PeC\setminus[0,\,1]$, which is open and simply connected. So the connected components of $S\setminus G$ are open and simply connected proper subsets of a compact surface and thus homeomorphic to an open disc.

Second, this indicates how to get from a dessin to the monodromy of the corresponding covering: As generators of $\pi_1(\PeC\setminus\{0,\,1,\,\infty\})$, we fix simple closed curves around $0$ and $1$ with winding number $1$ (say, starting in $\frac{1}{2}$), and call them $x$ and $y$, respecively. Choose a numbering of the edges of the dessin. Then, $p_x$ consists of the cycles given by listing the edges going out of each black vertex in counter-clockwise direction, and we get $p_y$ in the same manner from the white vertices.

Before we continue, we will state an easy consequence of Proposition \ref{prop:dessin-equiv}:
\begin{kor}\label{cor:dessins-finite-number}
For any $d\in \NN$, there are only finitely many dessins d'enfants of degree $d$ up to equivalence.
\end{kor}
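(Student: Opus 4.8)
The plan is to read off the finiteness directly from the combinatorial description in Proposition \ref{prop:dessin-equiv}, since that proposition has already done all the real work: it translates ``dessin of degree $d$ up to isomorphism'' into a purely finite piece of data. The cleanest route is via part c), which identifies isomorphism classes of degree-$d$ dessins with simultaneous-conjugacy classes of pairs $(p_x,p_y)\in S_d^2$ generating a transitive subgroup of $S_d$.

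So first I would invoke Proposition \ref{prop:dessin-equiv}\,c) to reduce the claim to the assertion that there are only finitely many such conjugacy classes. Then I would simply observe that $S_d$ is a finite group of order $d!$, hence $S_d^2$ is a finite set (of order $(d!)^2$), and therefore the subset of pairs with transitive generated subgroup is finite as well. Finally, a set of conjugacy classes is a quotient of that finite set by the $S_d$-action of simultaneous conjugation, so it is itself finite; in fact one gets the crude explicit bound of $(d!)^2$ dessins of degree $d$ before even dividing by conjugation. This finishes the proof.

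There is essentially no obstacle here: the content is entirely contained in the equivalences of Proposition \ref{prop:dessin-equiv}, and what remains is the trivial observation that a finite set has finitely many subsets and finitely many quotients. (If one preferred an alternative argument one could instead use part b) together with the classical fact — due to M.\ Hall — that a finitely generated group has only finitely many subgroups of each finite index, applied to the free group $\pi_1(\PeC\setminus\{0,1,\infty\})$ of rank $2$; but this is heavier machinery than needed.) The only point worth stating carefully is that passing from pairs of permutations to their conjugacy classes cannot increase cardinality, so finiteness is preserved under the reduction.
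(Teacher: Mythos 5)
Your proof is correct and follows exactly the paper's own argument: both reduce via Proposition \ref{prop:dessin-equiv}\,c) to pairs of permutations in $S_d^2$ up to simultaneous conjugation and observe the bound $(d!)^2$. Nothing to add.
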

\begin{proof} By Proposition \ref{prop:dessin-equiv}, a dessin can be characterised by a pair of permutations $(p_x,\,p_y)\in (S_d)^2$. So, $(d!)^2$ is an upper bound for the number of isomorphism classes of dessins of degree $d$.
\end{proof}

Next, we will establish the notion of a weak isomorphism between dessins. 
\begin{defi} We call two Belyi morphisms $\beta\colon X\to\PeC$ and $\beta'\colon X'\to\PeC$ \textit{weakly isomorphic} if there are biholomorphic maos $\varphi\colon X\to X'$ and $\psi\colon \PeC\to\PeC$ such that the following square commutes:
\begin{center}
\begin{tikzpicture}[description/.style={fill=white,inner sep=2pt}]
\matrix (m) [matrix of math nodes, row sep=2.5em,
column sep=2.25em, text height=1.5ex, text depth=0.25ex]
{ X & X'\\
 \PeC & \PeC\\};
\path[->,font=\scriptsize]
(m-1-1) edge node[auto] {$\varphi $} (m-1-2)
        edge node[left] {$\beta $} (m-2-1)
(m-1-2) edge node[auto] {$\beta' $} (m-2-2)
(m-2-1) edge node[auto] {$\psi$} (m-2-2);

\end{tikzpicture}
\end{center}
\end{defi}
Note that in the above definition, if $\beta$ and $\beta'$ are ramified exactly over $\{0,\,1,\,\infty\}$, then $\psi$ has to be a Möbius transformation fixing this set. This subgroup $W\leq\text{Aut}(\PeC)$ is clearly isomorphic to $S_3$ and generated by
\[s\colon z\mapsto 1-z\;\text{ and }\;t\colon z\mapsto z^{-1}.\]
In the case of two branch points, $\psi$ can of course still be taken from that group.  So for a dessin $\beta$, we get up to isomorphism all weakly isomorphic dessins by postcomposing with all elements of $W$. Let us reformulate this on a more abstract level:
\begin{defbem}\label{defbem:weak-action}
\begin{aenum}
\item The group $W$ acts on the set
%\footnote{Actually we should be talking about isomorphism classes in order to get a set.}
of dessins from the left by $w\cdot\beta\coloneqq w\circ\beta$. The orbits under that action are precisely the weak isomorphism classes of dessins.
\item For a dessin $\beta$ we call (by slight abuse of the above definition, as a weak isomorphism should consist of two morphisms) $W_\beta\coloneqq \Stab(\beta)$, its stabiliser in $W$, the \textit{group of weak automorphisms}.%\footnote{A maybe more precise notion of a weak automorphism would be, for a dessin $\beta:\:X\to\PeC$, a pair $(\varphi,\,\psi)$, where $\varphi\in\Aut(X),\,\psi\in W$ and $\psi\circ\beta=\beta\circ\varphi$. This would exhibit $W_\beta$ as the image of the group of such pairs under the projection $(\varphi,\,\psi)\mapsto \psi$.}
\item If a dessin $\beta$ is given by a pair of permutation $(p_x,\,p_y)$, then its images under the action of $W$ are described by the following table (where $p_z\coloneqq p_x^{-1}p_y^{-1})$:
\begin{center}
	\begin{tabular}{|c|c|c|c|c|c|}
	    \hline
		$\beta$ &   $s\cdot\beta$ &   $t\cdot\beta$ &   $(s\circ t)\cdot\beta$ &  $(t\circ s)\cdot\beta$ &  $(t\circ s \circ t)\cdot\beta$ \\\hline\hline
		$(p_x,\,p_y)$ & $(p_y,\,p_x)$ & $(p_z,\,p_y)$ & $(p_y,\,p_z)$ & $(p_z,\,p_x)$ & $(p_x,\,p_z)$ \\\hline
	\end{tabular}
\end{center}
\end{aenum}
\end{defbem}
\begin{proof}
Part a) was already discussed above. Proving c) amounts to checking what $s$ and $t$ do, and then composing and using \ref{satz:comp}, for example. The calculation has been done in \cite[2.5]{si}.
\end{proof}

Let us now explain the common notions of \emph{pre-clean} and \emph{clean} dessins and introduce a term for particularly un-clean ones:
\begin{defbem}\label{defbem:clean-dessins}
Let $\beta$ be a dessin defined by a pair of permutations $(p_x,\,p_y)$.
\begin{aenum}
\item $\beta$ is called \textit{pre-clean} if $p_y^2=1$, i.e.\ if all white vertices are either of valence $1$ or $2$.
\item $\beta$ is called \textit{clean} if all preimages of $1$ are ramification points of order precisely $2$, i.e.\ if all white vertices are of valence $2$.
\item If $\beta\colon X\to\PeC$ is a Belyi morphism of degree $d$, then if we define $a(z)\coloneqq 4z(1-z)\in\QQ[z]$ we find that $a\circ\beta$ is a clean dessin of degree $2d$.
\item We will call $\beta$ \textit{filthy} if it is not weakly isomorphic to a pre-clean dessin, i.e.\ $1\notin\{p_x^2,\,p_y^2,\,p_z^2\}$.
\end{aenum}
\end{defbem}
Another common class of dessins consists of the unicellular ones. We briefly discuss them here.
\begin{defbem}\label{defbem:uni-dessins}
\begin{aenum}
\item A dessin d'enfant $D$ is said to be \textit{unicellular} if it consists of exactly one open cell.
\item If $D$ is represented by a pair of permutations $(p_x,\,p_y)$, it is unicellular iff $p_z=p_x^{-1}p_y^{-1}$ consists of exactly one cycle.
\item If $D$ is represented by a Belyi morphism $\beta\colon X\to\PeC$, it is unicellular iff $\beta$ has exactly one pole.
\item If $D$ is a dessin in genus $0$, it is unicellular iff its graph is a tree.
\end{aenum}
\end{defbem}

\subsection{Fields of definition, moduli fields and Belyi's theorem}\label{ss:modulifields}

Let us recall the notions of fields of definition and moduli fields of schemes, varieties and morphisms. Following the presentation of this material in \cite{koe}, we will use the language of schemes. 

By $\Spec$, denote the usual spectrum functor from commutative rings to affine schemes. Fix a field $K$. Remember that a $K$-scheme is a pair $(S,p)$, where $S$ is a scheme and $p\colon S\to \Spec(K)$ a morphism, called the \emph{structure morphism}. $(S,p)$ is called $K$-variety if $S$ is reduced and $p$ is a separated morphism of finite type. A morphism between $K$-schemes is a scheme morphism forming a commutative triangle with the structure morphisms. Denote the such obtained categories by $\text{Sch}/K$ and $\text{Var}/K$, respectively.

Keep in mind that $p$ is part of the data of a $K$-scheme $(S,p)$, and that changing it gives a different $K$-scheme even though the abstract scheme $S$ stays the same. This allows us to define an action of $\Aut(K)$ on $\text{Sch}/K$:
\begin{defbem}\label{defbem:aut-K-action}
Let $(S,p)$ be a $K$-scheme, and $\sigma\in\Aut(K)$.
\begin{aenum}
\item Define $(S,p)^\sigma\coloneqq (S,\Spec(\sigma)\circ p)$.% and $(S,p).\sigma:=(S',p')$, where $(S', p')$ shall be defined by the following Cartesian diagram:
%\begin{center}
%\begin{tikzpicture}[description/.style={fill=white,inner sep=2pt}]
%\matrix (m) [matrix of math nodes, row sep=1.5em,
%column sep=1.25em, text height=1.5ex, text depth=0.25ex]
%{ S' && S\\
%&\square\\
% \Spec(K) && \Spec(K)\\
%};
%\path[->,font=\scriptsize]
%(m-1-1) edge node[auto] {$ \varphi $} (m-1-3)
%        edge node[auto] {$p'$} (m-3-1)
%(m-1-3) edge node[auto] {$p$} (m-3-3)
%(m-3-1) edge node[auto] {$\Spec(\sigma^{-1})$} (m-3-3);
%\end{tikzpicture}
%\end{center}
%Then $(S,p)^\sigma\cong(S,p).\sigma$ in $\text{Sch}/K$.
\item Mapping $(S,p)\mapsto(S,p)^\sigma$ defines a right action of $\Aut(K)$ on $\text{Sch}/K$. This restricts to an action on $\text{Var}/K$.
\end{aenum}
\end{defbem}

We are now able to define the terms field of definition and moduli field.
\begin{defi}\label{defi:fields}
\begin{aenum}
\item A subfield $k\subseteq K$ is called a\textit{field of definition} of a $K$-scheme ($K$-variety) $(S,p)$ if there is a $k$-scheme ($k$-variety) $(S',p')$ such that there is a Cartesian diagram
\begin{center}
\begin{tikzpicture}[description/.style={fill=white,inner sep=2pt}]
\matrix (m) [matrix of math nodes, row sep=1.5em,
column sep=1.25em, text height=1.5ex, text depth=0.25ex]
{ S && S'\\
&\square\\
 \Spec(K) && \Spec(k)\\
};
\path[->,font=\scriptsize]
(m-1-1) edge node[auto] {$ $} (m-1-3)
        edge node[auto] {$p$} (m-3-1)
(m-1-3) edge node[auto] {$p'$} (m-3-3)
(m-3-1) edge node[auto] {$\Spec(\iota)$} (m-3-3);
\end{tikzpicture}
\end{center}
where $\iota\colon k\to K$ is the inclusion. Alternatively, $(S,p)$ is said to be \textit{defined over} $k$ then.
\item For a $K$-scheme $(S,p)$, define the following subgroup $U(S,p)\leq\Aut(K)$:
\[U(S,p)\coloneqq \{\sigma\in\Aut(K)\mid (S,p)^\sigma\cong(S,p)\}\]
The \textit{moduli field} of $(S,p)$ is then defined to be the fixed field under that group:
\[M(S,p)\coloneqq K^{U(S,p)}\]
\end{aenum}
\end{defi}

We will also need to understand the action of $\Aut(K)$ on morphisms. We will start the bad habit of omitting the structure morphisms here, which the reader should amend mentally.
\begin{defi} Let $\beta\colon  S\to T$ be a $K$-morphism (i.e.\ a morphism of $K$-schemes) and $\sigma\in \Aut(K)$.
\begin{aenum}
\item The scheme morphism $\beta$ is of course also a morphism between the $K$-schemes $S^\sigma$ and $T^\sigma$. We denote this $K$-morphism by $\beta^\sigma\colon S^\sigma\to T^\sigma$.
\item Let $\beta'\colon S'\to T'$ be another $K$-morphism. Then we write $\beta\cong\beta'$ if there are $K$-isomorphisms $\varphi\colon S\to S'$ and $\psi\colon T\to T'$ such that $\psi\circ\beta=\beta'\circ\varphi$. Specifically, we have $\beta\cong\beta^\sigma$ iff there are $K$-morphisms $\varphi, \psi$ such that the following diagram (where we write down at least some structure morphisms) commutes:
\begin{center}
\begin{tikzpicture}[description/.style={fill=white,inner sep=2pt}]
\matrix (m) [matrix of math nodes, row sep=1.5em,
column sep=1.25em, text height=1.5ex, text depth=0.25ex]
{ S^\sigma && S\\
\\
T^\sigma && T\\
\\
 \Spec(K) && \Spec(K)\\
};
\path[->,font=\scriptsize]
(m-1-1) edge node[auto] {$\varphi$} (m-1-3)
        edge node[auto] {$\beta^\sigma$} (m-3-1)
(m-1-3) edge node[auto] {$\beta$} (m-3-3)
(m-3-1) edge node[auto] {$\psi$} (m-3-3)
        edge node[auto] {$p$} (m-5-1)
(m-3-3) edge node[auto] {$p$} (m-5-3)
(m-5-1) edge node[auto] {$\Spec(\sigma)$} (m-5-3);
\end{tikzpicture}
\end{center}
\end{aenum}
\end{defi}

Let us now define the field of definition and the moduli field of a morphism in the same manner as above. For an inclusion $\iota\colon k\to K$ denote the corresponding base change functor by $\cdot\times_{\Spec(k)}\Spec(K)$.
\begin{defi}\label{defi:fields-morph}
\begin{aenum}
\item A morphism $\beta\colon S\to T$ of $K$-schemes (or $K$-varieties, respectively) is said to be \textit{defined over} a field $k\subseteq K$ if there is a morphism $\beta'\colon S'\to T'$ of $k$-schemes ($k$-varieties) such that $\beta\cong\beta'\times_{\Spec(k)}\Spec(K)$.
\item For a morphism $\beta\colon S\to T$ of $K$-schemes, define the following subgroup $U(\beta)\leq\Aut(K)$:
\[U(\beta)\coloneqq \{\sigma\in\Aut(K)\mid \beta^\sigma\cong\beta\}\]
The \textit{moduli field} of $\beta$ is then defined to be the fixed field under this group:
\[M(\beta)\coloneqq K^{U(\beta)}\]
\end{aenum}
\end{defi}
If, now, $\beta\colon X\to\PeC$ is a Belyi morphism, the above notation gives already a version for a moduli field of $\beta$. But this is not the one we usually want, so we formulate a different version here:
\begin{defi}\label{defi:dessinmoduli} Let $\beta\colon X\to \PeC$ be a Belyi morphism, and let $U_\beta\leq\Aut(\CC)$ be the subgroup of field automorphisms $\sigma$ such that there exists a $\CC$-isomorphism $f_\sigma\colon X^\sigma\to X$ such that the following diagram commutes:
\begin{center}
\begin{tikzpicture}[description/.style={fill=white,inner sep=2pt}]
\matrix (m) [matrix of math nodes, row sep=1.5em,
column sep=1.25em, text height=1.5ex, text depth=0.25ex]
{ X^\sigma && X\\
\\
 (\PeC)^\sigma && \PeC\\
};
\path[->,font=\scriptsize]
(m-1-1) edge node[auto] {$f_\sigma $} (m-1-3)
        edge node[auto] {$\beta^\sigma$} (m-3-1)
(m-1-3) edge node[auto] {$\beta$} (m-3-3)
(m-3-1) edge node[auto] {$\text{Proj}(\sigma)$} (m-3-3);
\end{tikzpicture}
\end{center}
where $\text{Proj}(\sigma)$ shall denote the scheme (not $\CC$-scheme!) automorphism of $\PeC=\text{Proj}(\CC[X_0,\,X_1])$ associated to the ring (not $\CC$-algebra!) automorphism of $\CC[X_0,\,X_1]$ which extends $\sigma\in\Aut(\CC)$ by acting trivially on $X_0$ and $X_1$.

Then, we call the fixed field $M_\beta\coloneqq \CC^{U_\beta}$ the \textit{moduli field} of the dessin corresponding to $\beta$.
\end{defi}
The difference to Definition \ref{defi:fields-morph} b) is that there, we allow composing $\text{Proj}(\sigma)$ with automorphisms of $\PeC$, potentially making the subgroup of $\Aut(\CC)$ larger and therefore the moduli field smaller. Let us make that precise, and add some more facts about all these fields, by citing \cite[Proposition 6]{wo}:
\begin{prop}\label{prop:fields-properties}
 Let $K$ be a field, and $\beta\colon S\to T$ a morphism of $K$-schemes, or of $K$-varieties. Then:
\begin{aenum}
\item $M(S)$ and $M(\beta)$ depend only on the $K$-isomorphism type of $S$ resp. $\beta$.
\item If furthermore $\beta$ is a Belyi morphism, then the same goes for $M_\beta$.
\item Every field of definition of $S$ (resp. $\beta$) contains $M(S)$ (resp. $M(\beta)$).
\item We have $M(S)\subseteq M(\beta)$.
\item If $\beta$ is a Belyi morphism, then we also have $M(\beta)\subseteq M_\beta$.
\item In this case $M_\beta$ (and therefore also $M(S)$ and $M(\beta)$) is a number field, i.e.\ a finite extension of $\QQ$.
\end{aenum}
\end{prop}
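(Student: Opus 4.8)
The plan is to prove the six statements in order, each resting on the earlier ones and on the formalism of Section~\ref{ss:modulifields}. For~(a), I would observe that a $K$-isomorphism $\varphi\colon S\to S'$ compatible with the structure morphisms is automatically a $K$-isomorphism $S^\sigma\to (S')^\sigma$ for every $\sigma\in\Aut(K)$, since twisting a structure morphism by $\Spec(\sigma)$ commutes with precomposition by $\varphi$; hence $U(S)=U(S')$ and $M(S)=M(S')$, and the same bookkeeping with the pair $(\varphi,\psi)$ gives $U(\beta)=U(\beta')$ for $K$-isomorphic morphisms. For~(b) one conjugates a witnessing isomorphism $f_\sigma$ by $\varphi$ and $\varphi^\sigma$ to see that $U_\beta$ depends only on the covering class of Proposition~\ref{prop:dessin-equiv}, i.e.\ on the dessin.

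For~(c) I would use that if $S$ is defined over $k$, then base change along any $\sigma\in\Aut(K/k)$ yields a canonical $K$-isomorphism $S^\sigma\cong S$ (this is where $\sigma|_k=\id$ is used), so $\Aut(K/k)\subseteq U(S)$ and therefore $M(S)=K^{U(S)}\subseteq K^{\Aut(K/k)}=k$; for $K=\CC$ or $K=\QQq$ the last equality is the standard fact that every element of $K\setminus k$ is moved by some automorphism fixing $k$ (transcendental elements directly, algebraic ones via a nontrivial $k$-conjugate). The morphism case is identical with base change of morphisms in place of schemes. For~(d) I would note that the pair $(\varphi,\psi)$ witnessing $\beta^\sigma\cong\beta$ has $\varphi$ a $K$-isomorphism $S^\sigma\to S$, so $U(\beta)\subseteq U(S)$ and hence $M(S)\subseteq M(\beta)$. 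For~(e) the point is that $\operatorname{Proj}(\sigma)$ is itself a $\CC$-isomorphism $(\PeC)^\sigma\to\PeC$ lying over $\Spec(\sigma)$, so the square in Definition~\ref{defi:dessinmoduli} is the square in Definition~\ref{defi:fields-morph}(b) for the particular choice $\psi=\operatorname{Proj}(\sigma)$; hence $U_\beta\subseteq U(\beta)$ and $M(\beta)\subseteq M_\beta$.

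The substantive part is~(f). The key input is that $\operatorname{Proj}(\sigma)$ fixes $0=[0:1]$, $1=[1:1]$ and $\infty=[1:0]$, because these are $\QQ$-rational points of $\PeC$. Granting this, $\operatorname{Proj}(\sigma)\circ\beta^\sigma\colon X^\sigma\to\PeC$ is again a degree-$d$ Belyi morphism: $X^\sigma$ is still a non-singular connected projective $\CC$-curve, the degree is $\deg\beta=d$, and the ramification still lies over $\{0,1,\infty\}$. So by Proposition~\ref{prop:dessin-equiv} every $\sigma\in\Aut(\CC)$ gives a dessin of degree $d$, and by Corollary~\ref{cor:dessins-finite-number} there are only finitely many of these; since $U_\beta$ is the stabiliser of the class of $\beta$ under the resulting action of $\Aut(\CC)$, $n\coloneqq[\Aut(\CC):U_\beta]$ is finite. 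It follows first that $M_\beta=\CC^{U_\beta}$ contains no element transcendental over $\QQ$ --- such an $\alpha$ would admit infinitely many $\Aut(\CC)$-images, obtained by extending isomorphisms $\QQ(\alpha)\to\QQ(\alpha')$ for infinitely many transcendental $\alpha'$, forcing $[\Aut(\CC):\Stab(\alpha)]=\infty$ while $U_\beta\subseteq\Stab(\alpha)$ --- so $M_\beta\subseteq\QQq$; and then, for each $\alpha\in M_\beta$, its $\Aut(\CC)$-orbit equals its $\absGal$-orbit, of size $[\QQ(\alpha):\QQ]\le n$, so a primitive-element argument yields $M_\beta=\QQ(\alpha_0)$ for any $\alpha_0\in M_\beta$ of maximal degree, a number field. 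The inclusions from (d) and~(e) then show $M(S)$ and $M(\beta)$ are number fields as well, being subfields of $M_\beta$. I expect the two delicate points to be the verification that $\operatorname{Proj}(\sigma)$ preserves the Belyi condition (which reduces to the $\QQ$-rationality of $0,1,\infty$) and the final passage from finite index to number field.
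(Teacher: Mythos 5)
Your proposal is correct and follows essentially the same route as the paper, which dismisses (a)--(e) as immediate from the definitions (with (e) being exactly your observation that $\operatorname{Proj}(\sigma)$ is an admissible choice of $\psi$) and proves (f) by noting $\deg\beta=\deg\beta^\sigma$, so Corollary \ref{cor:dessins-finite-number} gives $[\Aut(\CC):U_\beta]=|\beta\cdot\Aut(\CC)|<\infty$ and hence $[M_\beta:\QQ]<\infty$. The only difference is that you spell out the steps the paper leaves implicit --- the fixed-field identity $K^{\Aut(K/k)}=k$ for $K=\CC,\QQq$ in (c) and the passage from finite index of $U_\beta$ to $M_\beta$ being a number field --- which is a sound and welcome elaboration, not a different argument.
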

\begin{proof}
Parts a) to e) are direct consequences of the above definitions, and for f) we note that surely if $\sigma\in\Aut(\CC)$ then $\deg(\beta)=\deg(\beta^\sigma)$. So by Corollary \ref{cor:dessins-finite-number}, we have $[\Aut(\CC):U_\beta]=|\beta\cdot\Aut(\CC)|<\infty$ and so $[M_\beta:\QQ]<\infty$.
\end{proof}

We can now state Belyi's famous theorem:
\begin{satz}[V.\ G.\ Belyi] \label{satz:belyi}
Let $C$ be a smooth projective complex curve. $C$ is definable over a number field if and only if it admits a Belyi morphism $\beta\colon C\to\PeC$.
\end{satz}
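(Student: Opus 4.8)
The plan is to prove the two implications separately: the ``easy'' one (existence of a Belyi morphism forces $C$ to be definable over a number field) by rigidity and descent, and the ``hard'' one (Belyi's theorem proper) by Belyi's two reduction tricks applied to an arbitrary morphism to $\PeC$.

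\textbf{From a Belyi morphism to a number field.} Given a Belyi morphism $\beta\colon C\to\PeC$ of degree $d$, Proposition \ref{prop:dessin-equiv} shows its $\CC$-isomorphism class is encoded by the purely combinatorial datum of a conjugacy class of a pair of permutations in $S_d^2$, and Corollary \ref{cor:dessins-finite-number} shows there are only finitely many of those. Since $\beta^\sigma$ is again a Belyi morphism of the same degree for every $\sigma\in\Aut(\CC)$, the $\Aut(\CC)$-orbit of $\beta$ is finite; arguing exactly as in the proof of Proposition \ref{prop:fields-properties} f), this forces $M(\beta)$ to be a number field, hence so is $M(C)$ by Proposition \ref{prop:fields-properties} d). To upgrade ``number-field moduli field'' to ``definable over a number field'' I would first note that $(C,\beta)$ is already definable over $\QQq$ --- its $\CC$-isomorphism type \emph{is} the combinatorial datum of Proposition \ref{prop:dessin-equiv}, which is insensitive to the base field, so the equivalence of parts a), d), e) of that proposition realises the same datum over $\QQq$ --- and then run a Galois descent argument over $\QQq$: for $\sigma\in\Gal(\QQq/M(\beta))$ choose isomorphisms $\varphi_\sigma\colon(C,\beta)^\sigma\xrightarrow{\ \sim\ }(C,\beta)$; these satisfy the cocycle relation up to the finite group $\Deck(\beta)\hookrightarrow S_d$ of automorphisms of the Belyi pair, so after replacing $M(\beta)$ by a suitable finite extension the descent obstruction dies and $(C,\beta)$, in particular $C$, is defined over that number field.

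\textbf{From a number field to a Belyi morphism.} Fixing a model of $C$ over a number field $k$, any non-constant function in $k(C)$ gives a non-constant morphism $f_0\colon C\to\PeC$ whose branch locus is a finite subset of $\PeC(\QQq)$. \emph{First reduction (making branch points rational).} Among the branch points not in $\PeC(\QQ)$ pick one, $\alpha$, of maximal degree $n\ge 2$ over $\QQ$, and postcompose $f_0$ with the minimal polynomial $m$ of $\alpha$, viewed as $m\colon\PeC\to\PeC$. The whole $\Gal(\QQq/\QQ)$-orbit of $\alpha$ is mapped to $0$, the new critical values of $m$ (images of the zeros of $m'$, of degree $\le n-1$, together with $\infty$) have degree $\le n-1$, and the images of the remaining old branch points have degree $\le n$; since $\gamma\mapsto m(\gamma)$ is Galois-equivariant, the number of degree-$n$ Galois orbits in the branch locus strictly drops. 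Iterating and inducting downward on $n$ yields a morphism whose branch locus lies in $\QQ\cup\{\infty\}$; if that set has at most two points we are done after a $\QQ$-rational Möbius transformation, and otherwise such a transformation lets us assume the branch locus is $\{0,1,\infty,\lambda_1,\dots,\lambda_s\}$ with $\lambda_i\in\QQ\setminus\{0,1\}$ and $s\ge1$. \emph{Second reduction (removing the extra points).} Pick $\lambda=\lambda_1$; applying an element of the group $W$ of Möbius transformations permuting $\{0,1,\infty\}$ we may assume $\lambda=p/(p+q)$ with coprime positive integers $p,q$, and postcompose with
\[
g(z)\ =\ \frac{(p+q)^{p+q}}{p^p\,q^q}\,z^p(1-z)^q,
\]
a rational map that is ramified only over $\{0,1,\infty\}$ and satisfies $g(0)=g(1)=0$, $g(\infty)=\infty$, $g(\lambda)=1$. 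Hence $g\circ(\,\cdot\,)$ has branch locus contained in $\{0,1,\infty\}\cup\{g(\lambda_2),\dots,g(\lambda_s)\}\subseteq\QQ\cup\{\infty\}$, with strictly fewer ``extra'' points. After finitely many steps the branch locus is exactly $\{0,1,\infty\}$, giving the Belyi morphism.

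\textbf{Where the difficulty lies.} The converse is essentially routine once the two tricks are set up; the only point needing care is the termination argument for the first reduction (strict decrease of the number of top-degree Galois orbits in the branch locus), for which the lexicographic measure $(n,\#\{\text{degree-}n\text{ orbits}\})$ must be checked to decrease at each step. The genuine obstacle is in the first implication: the passage from ``field of moduli is a number field'' to ``$C$ is definable over a number field'' needs descent --- first the reduction from $\CC$ to $\QQq$, then Galois descent over $\QQq$ controlled by the finite group $\Deck(\beta)$ --- and this is the only non-formal input, the step I would expect to spend the most effort making precise.
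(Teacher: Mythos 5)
Your ``only if'' direction (number field $\Rightarrow$ Belyi morphism) is the classical two-step Belyi argument, and it is correct, including the lexicographic termination measure for the first reduction and the polynomial $g(z)=\tfrac{(p+q)^{p+q}}{p^pq^q}z^p(1-z)^q$ for the second; the paper does not reprove this but refers to the literature, so here you simply supply more detail than the text. Likewise, your finiteness argument showing that $M(\beta)$, and hence $M(C)$, is a number field is exactly Proposition \ref{prop:fields-properties} f) combined with Corollary \ref{cor:dessins-finite-number}, as in the paper.

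The genuine gap is in the remaining step of the ``if'' direction. Your assertion that $(C,\beta)$ is ``already definable over $\QQq$'' because the combinatorial datum of Proposition \ref{prop:dessin-equiv} is ``insensitive to the base field'' is precisely the deep point, not a formality: the equivalences a), d), e) of that proposition are proved over $\CC$ via the Riemann existence theorem and GAGA, and transferring them to $\QQq$ amounts to the comparison of (\'etale) fundamental groups of $\Pe_{\QQq}\setminus\{0,1,\infty\}$ and $\PeC\setminus\{0,1,\infty\}$ --- a theorem of essentially the same depth as the implication you are proving, which you invoke without proof. The paper closes exactly this gap differently: it combines the number-field bound on the moduli field with the cited theorem of Hammer--Herrlich that a curve can be defined over a \emph{finite extension} of its moduli field. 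Your subsequent Galois-descent paragraph does not repair the missing step: once a $\QQq$-model exists, definability over a number field is immediate (the finitely many defining coefficients generate one), so the cocycle argument is superfluous; and as a mechanism it is also not sound as stated, since ``cocycles up to $\Deck(\beta)$ whose obstruction dies after a finite extension'' is essentially a re-assertion of the moduli-field-versus-field-of-definition result (Hammer--Herrlich type) rather than a proof of it. So either cite a precise descent theorem (Hammer--Herrlich, or Grothendieck's comparison theorem for fundamental groups, or a spreading-out/specialization argument) at that point, or the proof of the ``obvious'' direction is incomplete.
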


In our scope, the gap between Proposition \ref{prop:fields-properties} f) and the “if” part (traditionally called the “obvious” part) of the theorem can be elegantly filled by the following theorem that can be found in \cite{hh}:

\begin{satz}[H.\ Hammer, F.\ Herrlich] Let $K$ be a field, and $X$ be a curve over $K$. Then $X$ can be defined over a finite extension of $M(X)$.
\end{satz}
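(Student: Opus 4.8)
This is a descent statement, with content only when $K$ carries many automorphisms; the plan is to reduce to $K$ algebraically closed and then to realise $X$ as a point of a coarse moduli scheme over $\QQ$. First I would perform the reduction: passing to $X_{\overline K}\coloneqq X\times_K\overline K$, a model of $X_{\overline K}$ over a field finite over $M(X_{\overline K})$ descends, by ordinary Galois descent along the Galois extension $\overline K/K$, to a model of $X$ over a subfield of $K$ that one checks to be finite over $M(X)$; the bookkeeping comparing $M(X_{\overline K})$ with $M(X)$ is routine, and in the application $K=\CC$ it is vacuous. So I may assume $K=\overline K$.

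Assume first $g\coloneqq g(X)\ge 2$, so that $\Aut(X)$ is finite. Let $M_g$ be the coarse moduli scheme of smooth projective genus-$g$ curves, a quasi-projective scheme over $\QQ$, so that over any algebraically closed field the moduli point gives a bijection between isomorphism classes of such curves and $M_g$-points. Write $\langle X\rangle\in M_g(K)$ for the moduli point of $X$. By functoriality of $M_g$ in automorphisms of the base field, $\langle X\rangle^\sigma=\langle X^\sigma\rangle$, and since $K=\overline K$ one has $X^\sigma\cong X$ if and only if $\langle X^\sigma\rangle=\langle X\rangle$; hence the $\Aut(K)$-stabiliser of $\langle X\rangle$ is exactly the group $U(X)$ of Definition \ref{defi:fields}. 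Choosing an affine cover of $M_g$ defined over $\QQ$, this forces every coordinate of $\langle X\rangle$ to lie in $K^{U(X)}=M(X)$; that is, $\langle X\rangle$ is the image of a point $z_0\in M_g(M(X))$.

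The next and central step is to manufacture from $z_0$ an honest curve over a \emph{finite} extension of $M(X)$ with moduli point $z_0$; this is where the finite extension genuinely enters, since by the examples of Earle and Shimura the moduli field need not itself be a field of definition. I would rigidify with level structure: fix $n\ge 3$ prime to the characteristic and let $M_g[n]\to M_g$ be the forgetful morphism from the fine moduli scheme of genus-$g$ curves with full level-$n$ structure (a scheme over $\QQ(\zeta_n)$); it is finite and surjective, so its fibre over $z_0$ is a non-empty finite $M(X)$-scheme and has a closed point $w$ with $L\coloneqq\kappa(w)$ finite over $M(X)$. As $M_g[n]$ is fine, $w$ yields a genuine genus-$g$ curve $X_L$ over $L$ (forgetting the level structure) whose moduli point is $z_0$ read over $L$; embedding $L\hookrightarrow K$ over $M(X)$ and base-changing, $X_L\times_LK$ has moduli point $\langle X\rangle$, hence $X_L\times_LK\cong X$ because $K$ is algebraically closed. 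So $X$ is defined over $L$, a finite extension of $M(X)$.

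For $g\le 1$ the statement is classical and in fact sharper: over $K=\overline K$ a genus-$0$ curve is $\Pe_K$, defined over $\QQ\subseteq M(X)$, while a genus-$1$ curve carries a rational point, hence is an elliptic curve $E$ with $\QQ(j(E))\subseteq M(X)$, and is defined over $\QQ(j(E))$ by its standard Weierstrass model. The hard part is the passage from $z_0$ to $X_L$ for $g\ge 2$: one must be certain that moduli-theoretic descent returns $X$ itself rather than merely a curve with the same invariants --- this uses both that $K$ is algebraically closed and the coarse-moduli property --- and one must settle for a finite extension rather than $M(X)$ itself, which is exactly where the statement cannot be strengthened. A conceptually equivalent but more computational route for $g\ge 2$ is to apply Weil's descent theorem to a datum $(f_\sigma\colon X^\sigma\xrightarrow{\sim}X)_{\sigma\in U(X)}$: the resulting obstruction lies in the finite group $\Aut(X)$ and is killed after a finite extension of $M(X)$, which is what is wanted.
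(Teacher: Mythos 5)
This statement is imported by the paper from Hammer--Herrlich \cite{hh} without proof, so there is no internal argument to compare with; your rigidification strategy (coarse $M_g$ over the prime field, a finite level-$n$ cover carrying a universal family, and a closed point of the fibre over the moduli point) is in fact the same kind of argument as in the cited source, and for $K$ algebraically closed and $g\ge 2$ your two middle paragraphs are essentially a correct proof. The genuine gap is the very first step, the reduction to $K=\overline{K}$, which you declare routine. It is not: ``ordinary Galois descent along $\overline{K}/K$'' requires descent data, and a model of $X_{\overline{K}}$ over a field finite over $M(X_{\overline{K}})$ gives, after base change back to $K$, only some $\overline{K}/K$-form of $X$; producing compatible isomorphisms that descend $X$ \emph{itself} to a subfield of $K$ is exactly the field-of-moduli versus field-of-definition problem the theorem is about, so the reduction assumes what is to be proved. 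Moreover both tools powering your main argument are special to algebraically closed $K$: equality of moduli points in $M_g(K)$ only detects isomorphism over $\overline{K}$, and a closed point $w$ of the fibre of $M_g[n]\to M_g$ over $z_0$ has residue field finite over $M(X)$ but need not embed into $K$ at all. For the latter, note the toy example of the affine $\QQ$-variety $Z\colon x^2+y^2+z^2=-1$: it has a point over its own function field $K=\QQ(Z)$, yet every closed point of $Z$ has as residue field a non-real number field, and no such field embeds into $K$ because $\QQ$ is algebraically closed in $K$. So for arbitrary $K$ a different mechanism is needed (this is what Hammer--Herrlich supply); as written, your argument proves the theorem only for algebraically closed $K$ --- which, to be fair, is the only case the present paper uses, namely $K=\CC$ in the deduction that curves admitting Belyi morphisms are definable over number fields.

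The closing alternative via Weil descent is also not a proof: Weil's criterion needs a family $(f_\sigma)_{\sigma}$ satisfying the cocycle condition $f_{\sigma\tau}=f_\sigma\circ{}^{\sigma}\!f_\tau$ (plus a finiteness/continuity hypothesis when $K/M(X)$ is not algebraic), and the deviation from the cocycle condition is a function into the possibly nonabelian finite group $\Aut(X)$; that it can be ``killed after a finite extension'' is precisely the assertion to be established, not a formal consequence of finiteness. The remaining details of your main argument are fine: the coordinates of $\langle X\rangle$ in a $\QQ$-rational affine chart are fixed by $U(X)$ and hence lie in $M(X)$ by definition of the fixed field; $M_g[n]\to M_g$ (for $n\ge 3$ invertible in $K$) is finite and surjective with $M_g[n]$ fine; and $L=\kappa(w)$ embeds into $K$ over $M(X)$ because $K$ is algebraically closed; in positive characteristic one should work over the prime field rather than $\QQ$, and the genus $\le 1$ cases are handled correctly.
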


The “only if” (traditionally called “trivial”) part of the theorem is a surprisingly explicit calculation of which several variations are known. The reader may refer to Section 3.1 of \cite{ggd}. 

\subsection{The action of $\absGal$ on dessins}

Given a Belyi morphism $\beta\colon X\to\PeC$ and an automorphism $\sigma\in\Aut(\CC)$, in fact $\beta^\sigma\colon X^\sigma\to\PeC$ is again a Belyi morphism, as the number of branch points is an intrinsic property of the underlying scheme morphism $\beta$ that is not changed by changing the structure morphisms. So, $\Aut(\CC)$ acts on the set of Belyi morphisms, and so, due to Proposition \ref{prop:dessin-equiv}, on the set of dessins d'enfants. By Belyi's theorem, this action factors through $\absGal$. It turns out that this action is faithful. Let us state this well known result in the following

\begin{satz}\label{satz:dessinaction}
For every $g\in\NN$, the action of $\absGal$ on the set of dessins d'enfants of genus $g$ is faithful. This still holds for every $g$ if we restrict to the clean, unicellular dessins in genus $g$.
\end{satz}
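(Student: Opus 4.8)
The plan is to reduce the statement to the known faithfulness of $\absGal$ on \emph{all} dessins (of unbounded genus, or indeed on some convenient large class), using the two "cleaning" operations already recorded in the excerpt to move a given dessin into the smaller class of clean unicellular dessins of a fixed genus $g$ without destroying the Galois action. The starting point is the classical fact — which I will either cite or sketch via Belyi's theorem — that $\absGal$ acts faithfully on the set of \emph{all} dessins d'enfants: given $\sigma\neq\id$ in $\absGal$, there is an elliptic curve (or any curve with enough moduli) not fixed by $\sigma$, and by Belyi's theorem it carries a Belyi morphism, giving a dessin $\beta$ with $\beta^\sigma\ncong\beta$.

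First I would fix $\sigma\in\absGal\setminus\{\id\}$ and produce \emph{some} dessin $\beta$ with $\beta^\sigma\ncong\beta$. Next I would make it clean: by Definition and Remark \ref{defbem:clean-dessins}c), composing with $a(z)=4z(1-z)$ turns $\beta$ into a clean dessin $a\circ\beta$ of degree $2d$, and since $a\in\QQ[z]$ this operation commutes with the Galois action, so $(a\circ\beta)^\sigma=a\circ\beta^\sigma$; one must check that $a\circ\beta^\sigma\ncong a\circ\beta$ still holds, which follows because the fibre of $a\circ\beta$ over, say, $\infty$ together with its ramification recovers $\beta$ up to the $W$-action — more carefully, a $\CC$-isomorphism of the cleaned dessins restricts to one of the originals, so non-isomorphism is preserved. (If one only wants faithfulness on clean dessins of \emph{some} genus, this step already suffices once combined with the genus-control step below.) The genus of $a\circ\beta$ is computed from Riemann–Hurwitz and need not be $g$; to land in a prescribed genus $g$ I would post-compose further, or pre-compose with a suitable auxiliary $\QQ$-morphism, to adjust the ramification over $\{0,1,\infty\}$ and hence the Euler characteristic, always keeping the auxiliary maps defined over $\QQ$ so that the Galois action is unaffected and non-isomorphism is preserved.

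For the unicellular refinement I would use Definition and Remark \ref{defbem:uni-dessins}: in genus $0$ unicellular means the graph is a tree, and more generally unicellularity is the condition that $p_z=p_x^{-1}p_y^{-1}$ is a single cycle, equivalently $\beta$ has a single pole. To force a single pole while staying defined over $\QQ$, I would compose $\beta$ (or its cleaned version) with a rational map $\PeC\to\PeC$ defined over $\QQ$ that is totally ramified over $\infty$ and unramified (or at least $\{0,1\}$-ramified) elsewhere over $\{0,1,\infty\}$ — e.g.\ a power map $z\mapsto z^n$ composed with Möbius adjustments — arranging simultaneously that the result is clean, unicellular, and of genus exactly $g$. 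Since all these adjustments are by $\QQ$-morphisms, $\sigma$ still moves the resulting dessin.

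The main obstacle I expect is the simultaneous bookkeeping: the operations "make clean", "make unicellular", and "fix the genus to $g$" interact through Riemann–Hurwitz, and I must exhibit a single construction — or a short sequence of $\QQ$-defined compositions — achieving all three at once while provably preserving $\beta^\sigma\ncong\beta$. The non-isomorphism preservation is the delicate point: composing with a fixed $\QQ$-morphism $h$ sends $\beta\mapsto h\circ\beta$ and $\beta^\sigma\mapsto h\circ\beta^\sigma$, and I need that $h\circ\beta\cong h\circ\beta^\sigma$ would force $\beta\cong\beta^\sigma$ (at least up to the finite ambiguity coming from $\Aut$ of the target and from $\Deck(h)$), which is exactly the kind of statement Lemma \ref{hilfssatz:decktrafo} is designed to supply when $h$ is normal; for non-normal $h$ I would argue directly on monodromies using Proposition \ref{satz:comp}. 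Once that lemma is in hand, the faithfulness statement for clean unicellular dessins of genus $g$ follows formally from the faithfulness on all dessins.
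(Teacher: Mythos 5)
Your plan — start from some dessin moved by $\sigma$ and push it into the clean, unicellular, genus-$g$ class by composing with $\QQ$-defined maps — breaks at exactly the points you flag, and in one place you don't flag. Post-composition $\beta\mapsto h\circ\beta$ never touches the source curve $X$, so it cannot change the genus at all; your proposed mechanism of ``adjusting the Euler characteristic'' by composing over $\{0,1,\infty\}$ does nothing to $g(X)$, and pre-composition would mean choosing a covering of $X$, which is in general not a $\QQ$-morphism (since $X$ itself need not be defined over $\QQ$) and would require a fresh equivariance argument. Likewise, post-composing with a map $h$ that is totally ramified over $\infty$ does not force unicellularity: $(h\circ\beta)^{-1}(\infty)=\beta^{-1}(h^{-1}(\infty))$, so you still have as many poles as $\beta$ has over that point. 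Finally, the genus-$0$ case cannot be started the way you start at all, since every genus-$0$ curve is isomorphic to $\PeC$; there the non-isomorphism must live in the map, which is precisely Lenstra's tree argument and is not supplied by ``faithfulness on all dessins''. The paper avoids all of this by fixing the curve first: for $g=1$ take an elliptic curve with $j$-invariant moved by $\sigma$, for $g\geq 2$ a hyperelliptic curve with moduli moved by $\sigma$ (Armknecht), so that $X^\sigma\ncong X$ and \emph{any} Belyi map on $X$ is a dessin of genus $g$ moved by $\sigma$; then one re-runs the ``trivial'' part of Belyi's theorem on this fixed $X$, starting from a function with a single pole (Riemann--Roch) and post-composing only with polynomials, which preserves ``one pole'' and the genus, and cleaning with $4z(1-z)$ at the end (Definition and Remark \ref{defbem:clean-dessins}c). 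In that setup the non-isomorphism is automatic because the underlying curves already differ — which is exactly the robustness your composition-based reduction gives up. The genus-$0$ statement is quoted from Schneps/Lenstra (faithfulness even on trees), and the clean unicellular refinement from the author's thesis.

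The second genuine gap is your preservation of non-isomorphism under cleaning. Lemma \ref{hilfssatz:decktrafo}, applied to the degree-$2$ normal cover $a(z)=4z(1-z)$, only yields: if $a\circ\beta\cong a\circ\beta^\sigma$ then $\beta\cong\beta^\sigma$ \emph{or} $\beta\cong s\circ\beta^\sigma$ with $s(z)=1-z$. That is, you land at weak isomorphism, not isomorphism, and the second alternative has to be excluded by hand (e.g.\ by arranging distinguishable ramification data over $0$ and over $1$ in the dessin you clean). This is not a removable formality: the paper itself is careful about exactly this isomorphism-versus-weak-isomorphism distinction, and Example \ref{bsp:smaller-curve-moduli} exhibits Galois conjugate, non-isomorphic dessins that \emph{are} weakly isomorphic. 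So your ``finite ambiguity coming from $\Deck(h)$'' is the whole difficulty at this step, and the proposal as written does not resolve it; in the fixed-curve argument for $g\geq 1$ it simply never arises.
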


In genus $1$, this can be seen easily as the action defined above is compatible with the $j$-invariant, i.e.\ we have $\sigma((j(E)))=j(E^\sigma)$ for an elliptic curve $E$. It was noted by F. Armknecht in \cite[Satz 3.11]{arm} that this argument generalises to higher genera, by even restricting to hyperelliptic curves only.

The standard proof for genus $0$ can be found in \cite[Thm.\ II.4]{schneps}, where it is attributed to H. W. Lenstra, Jr. The result there even stronger than the claim here, as it establishes the faithfulness of the action even on trees.

The fact that the faithfulness does not break when restricting to unicellular and clean dessins can be proven easily by carefully going through the proof of the “trivial” part of Belyi's theorem—see \cite[Proof of Theorem G]{diss} for details.

\section{Origamis and their Teichmüller curves}\label{sec:origamis}

\subsection{Origamis as coverings}
Here, we will the exhibit the definition of an \emph{origami} first in the spirit of the previous section and then in the scope of translation surfaces. We will then present a short survey of the theory of translation surfaces and Teichmüller curves, and finally speak about arithmetic aspects of all these objects.

The term “origami” was coined by P.\ Lochak—see \cite{loc} and be aware that it is used in a slightly different meaning there.

The standard intuition for constructing an origami of degree $d\in\NN$ is the following: Take $d$ copies of the unit square $[0,\,1]\times[0,\,1]$ and glue upper edges to lower edges and left to right edges, respecting the orientation, until there are no free edges left, in a way that we do not end up with more than one connected component. In this way, we get a compact topological surface $X$ together with a tiling into $d$ squares (hence the other common name for origamis: \emph{square tiled surfaces}).

Such a tiling naturally defines a (ramified) covering $p\colon X\to E$ of the unique origami of degree $1$, which we call $E$, i.e.\ a compact surface of genus $1$, by sending each square of $X$ to the unique square of $E$. $p$ is ramified over one point, namely the image under glueing of the vertices of the square. 

To be more exact, fix $E\coloneqq \CC/\ZZ^2$ (this defines a complex structure on $E$), then $p$ is ramified (at most) over $0+\ZZ^2$. Denote this point by $\infty$ for the rest of this work, and furthermore $E^\ast\coloneqq E\setminus\{\infty\},\: X^\ast\coloneqq p^{-1}(E^\ast)$. Note that $p\colon X^\ast\to E^\ast$ is an unramified covering of degree $d$, and the fundamental group of $E^\ast$ is free in two generators, so one should expect analogies to the world of dessins. Let us write down a proper definition:

\begin{defi}\label{defi:origami}
\begin{aenum}
\item An \textit{origami} $O$ of degree $d$ is an unramified covering $O\coloneqq (p\colon X^*\to E^*)$ of degree $d$, where $X^*$ is a (non-compact)  topological surface.
\item If $O'=(p'\colon X'^*\to E^*)$ is another origami, then we say that $O$ is \textit{equivalent to} $O'$ (which we denote by $O\cong O'$), if the defining coverings are isomorphic, i.e.\ if there is a homeomorphism $\varphi\colon X'^*\to X^*$ such that $p'=p\circ\varphi$.
\item $O=(p\colon X^*\to E^*)$ is called \textit{normal} if $p$ is a normal covering.
\end{aenum}
\end{defi}
Like in the case of dessins, this is not the only possible way to define an origami. We list several others here:
\begin{prop}\label{prop:ori-equiv}
Giving an origami of degree $d$ in the above sense up to equivalence is equivalent to giving each of the following data:
\begin{aenum}
\item A conjugacy class of a subgroup $G\leq\pi_1(E^*)\cong F_2$ of index $d$.
\item A pair of permutations $(p_A,\,p_B)\in S_d^2$, such that $\langle p_A,\,p_B\rangle\leq S_d$ is a transitive subgroup, up to simultaneous conjugation in $S_d$.
\item A non-constant holomorphic map $p\colon  X\to E$ of degree $d$, where $X$ is a compact Riemann surface and $p$ is ramified at most over the set $\{\infty\}$, up to fibre preserving biholomorphic maps.
\item A non-constant morphism $\colon  X\to E$ of degree $d$, where $X$ is a non-singular connected projective curve over $\CC$ and $p$ is ramified at most over the set $\{\infty\}$, up to fibre preserving isomorphisms.
\end{aenum}
\end{prop}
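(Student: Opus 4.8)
To prove Proposition~\ref{prop:ori-equiv}, the plan is to run the same chain of equivalences as in the proof of Proposition~\ref{prop:dessin-equiv}, with the thrice-punctured sphere $\PeC\setminus\{0,1,\infty\}$ replaced throughout by the once-punctured torus $E^\ast=E\setminus\{\infty\}$; what makes the whole dictionary go through is that, just as for the thrice-punctured sphere, the group $\pi_1(E^\ast)$ is free of rank two. The starting observation is that Definition~\ref{defi:origami} presents an origami of degree $d$ as precisely an element of the left-hand side of the bijection in Proposition~\ref{prop:coverings} applied to the coverable space $E^\ast$, namely an unramified degree-$d$ covering $p\colon X^\ast\to E^\ast$ up to fibre-preserving homeomorphism. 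Fixing a free basis $A,B$ of $\pi_1(E^\ast)$, a permutation representation $m\colon\pi_1(E^\ast)\to S_d$ up to conjugation in $S_d$ is the same datum as the pair $(p_A,p_B)\coloneqq(m(A),m(B))\in S_d^2$ up to simultaneous conjugation, and its image $\langle p_A,p_B\rangle$ is transitive exactly when $X^\ast$ is connected, which is built into the notion of an origami; this gives the equivalence of the definition with b). The equivalence of b) with a) is the usual dictionary between permutation representations of a free group and conjugacy classes of finite-index subgroups — send $m$ to $G\coloneqq m^{-1}(\Stab(1))$, and conversely let a subgroup of index $d$ act on its right coset space — with transitivity again matching the connectedness of the corresponding cover.

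For the passage from a)/b) to c), I would invoke standard Riemann surface theory verbatim as in Proposition~\ref{prop:dessin-equiv}: the torus $E=\CC/\ZZ^2$ carries its canonical complex structure, an unramified topological covering $X^\ast\to E^\ast$ pulls it back uniquely so that $X^\ast$ becomes a Riemann surface and $p$ holomorphic, and since $X^\ast$ is the complement of finitely many points in a compact surface, Riemann's theorem on removable singularities fills these punctures in a unique way, producing a compact Riemann surface $X$ and a degree-$d$ holomorphic map $p\colon X\to E$ ramified at most over $\infty$. This is a bijection on isomorphism classes because a fibre-preserving biholomorphism of the filled-in surfaces restricts to one of the punctured surfaces, while conversely a fibre-preserving homeomorphism of the punctured surfaces is automatically biholomorphic — over an admissible open subset of $E^\ast$ it is a composition of biholomorphisms — and extends over the punctures. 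The equivalence of c) with d) is then the GAGA principle of \cite{gaga}: a compact Riemann surface is canonically a non-singular connected projective $\CC$-curve, holomorphic maps are $\CC$-scheme morphisms, and this equivalence respects both the degree and the ramification locus.

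I do not expect a genuine obstacle here: the statement is a routine transcription of the dessins case. The one point deserving care is the bookkeeping of the various notions of equivalence — ``equivalence of coverings'' in Definition~\ref{defi:origami}, ``simultaneous conjugation'' in b), ``conjugacy class'' in a), ``fibre-preserving biholomorphism'' in c), ``fibre-preserving $\CC$-scheme isomorphism'' in d) — all of which must be checked to match under the translations above; in the dessins setting this matching is exactly what Proposition~\ref{prop:coverings} together with the references \cite{mv} and \cite{js} supply, and here it follows the same way. Accordingly I would present the argument as a sketch, citing Proposition~\ref{prop:coverings}, the freeness of $\pi_1(E^\ast)$, Riemann's removable singularity theorem and GAGA, rather than spelling out the diagram chases.
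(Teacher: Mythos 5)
Your proposal is correct and follows essentially the same route as the paper, which simply notes that the proof is completely analogous to that of Proposition \ref{prop:dessin-equiv} (citing \cite[Proposition 1.2]{kk} for details): the chain Proposition \ref{prop:coverings} plus freeness of $\pi_1(E^\ast)$ for a)/b), pullback of the complex structure and Riemann's removable singularity theorem for c), and GAGA for d) is exactly the intended argument. Your additional observation that the origami case is even simpler, since Definition \ref{defi:origami} already presents an origami as a covering and no analogue of the Voisin--Malgoire/Jones--Singerman step is needed, is accurate.
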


The proof is completely analogous to the one of Proposition \ref{prop:dessin-equiv}. See \cite[Proposition 1.2]{kk} for details.

\subsection{Origamis and translation surfaces}
As we want to study origamis as translation surfaces, let us briefly recall their theory.

\begin{defi}\label{defi:trans-structure}
Let $X$ be a Riemann surface, and let $\mathfrak{X}$ be its complex structure.
\begin{aenum}
\item A \textit{translation structure} $\mu$ on $X$ is an atlas compatible with $\mathfrak{X}$ (as real analytic atlases, i.e.\ their union is an atlas of a real analytic surface), such that for any two charts $f,\,g\in\mu$, the transition map is locally a translation, i.e.\ a map
\[\varphi_{f,g}\colon U\subseteq\CC\to U'\subseteq\CC,\,x\mapsto x+t_{f,g}\]
for some $t_{f,g}\in\CC$. We call the pair $X_\mu\coloneqq (X,\,\mu)$ a translation surface.
\item A biholomorphic map $f\colon X_\mu\to Y_\nu$ between translation surfaces is called a \textit{translation}, or an \textit{isomorphism of translation surfaces}, if it is locally (i.e.\ on the level of charts) a translation. $X_\mu$ and $Y_\nu$ are then called isomorphic (as translation surfaces). If we have furthermore $X=Y$, we call the translation structures $\mu$ and $\nu$ are \textit{equivalent}.
\item If $\mu$ is a translation structure on $X$, and $A\in \SL_2(\RR)$, then we define the translation structure
\[A\cdot\mu\coloneqq \{A\cdot f\mid f\in \mu\}\]
where $A$ shall act on $\CC$ by identifying it with $\RR^2$ as usual. Therefore, we get a left action of $\SL_2(\RR)$ on the set of translation structures on $X$.
\end{aenum}
\end{defi}

Keep in mind that a translation structure $\mu$ on $X$, seen as a complex structure, is usually \emph{not equivalent} to $\mathfrak{X}$!

Let us go on by defining affine diffeomorphisms and the notion of the Veech group of a translation surface:

\begin{defbem}\label{defbem:veechgroup}
Let $X_\mu,\, Y_\nu$ be translation surfaces.
\begin{aenum}
\item An \textit{affine diffeomorphism} $f\colon X_\mu\to Y_\nu $ is an orientation preserving diffeomorphism such that locally (i.e.\ when going down into the charts) it is a map of the form
\[x\mapsto A\cdot x+ t,\, A\in \GL_2(\RR),\,t\in \CC.\]
We call $X_\mu$ and $Y_\nu$ \textit{affinely equivalent} if there is such an affine diffeomorphism.
\item The matrix $A\eqqcolon A_f$ in a) actually is a global datum of $f$, i.e.\ it is the same for every chart. We write $\text{der}(f)\coloneqq A_f$.
\item An affine diffeomorphism $f$ is a translation iff $A_f=I$.
\item If $g\colon Y_\nu\to Z_\xi$ is another affine diffeomorphism, then $\text{der}(g\circ f)=\text{der}(g)\cdot\text{der}(f)$. In particular, $\text{der}\colon \text{Aff}^+(X_\mu)\to \GL_2(\RR)$, is a group homomorphism.
\item We denote the group of all affine orientation preserving diffeomorphisms from $X_\mu$ to itself by $\text{Aff}^+(X_\mu)$.
%\item The map $\text{der}\colon \text{Aff}^+(X_\mu)\to \GL_2(\RR)$, is a group homomorphism.
\item $\text{Trans}(X_\mu)\coloneqq \ker(\text{der})$ is called the \textit{group of translations} of $X_\mu$.
\item $\Gamma(X_\mu)\coloneqq \im(\text{der})$ is called the \textit{Veech group} of $X_\mu$. Its image under the projection map $\GL_2(\RR)\to\text{PGL}_2(\RR)$ is called the \textit{projective Veech group} of $X_\mu$. We denote it by $\text{P}\Gamma(X_\mu)$.
\item If $A\in\SL_2(\RR)$, then we have $A\in\Gamma(X_\mu)\Leftrightarrow X_\mu\cong X_{A\cdot\mu}$ as translation surfaces.
\end{aenum}
\end{defbem}

For a discussion of this, see \cite[Section 1.3]{gs}.

Note that if $X_\mu$ has finite volume, every affine diffeomorphism has to preserve the volume, and as we require affine affine diffeomorphisms also to preserve the orientation, this yields $\Gamma(X_\mu)\subseteq \SL_2(\RR)$.

Our model genus $1$ surface $E=\CC/\ZZ^2$ carries a natural translation structure $\mu_0$, as $\ZZ^2$ acts on $\CC$ by translations. It is quite easy to see that the Veech group $\Gamma(E_{\mu_0})$ is the modular group $\SL_2(\ZZ)$: Denote by $\pi\colon \CC\to E$ the projection, then every affine diffeomorphism on $E$ lifts to a globally affine transformation on $\CC$ via $\pi$. On the other hand, a matrix $A\in\SL_2(\RR)$ induces an affine diffeomorphism on $E$ if and only if it respects the lattice $\ZZ^2$, i.e.\ iff $A\in\SL_2(\ZZ)$. Analogously, for $B\in\SL_2(\RR)$, the veech group of $\CC/(B\cdot \ZZ^2)$ (with a translation structure $\mu_B$ obtainend in the same way as above) is $B\SL_2(\ZZ)B^{-1}$. Also, we have $\Gamma(E_{B\cdot \mu_0})=B\SL_2(\ZZ)B^{-1}$.

Let us fix our favourite generators for $\SL_2(\ZZ)$:
\[S\coloneqq \begin{pmatrix}0 & -1\\ 1& 0\end{pmatrix}\text{ and }T\coloneqq\begin{pmatrix}1 & 1\\ 0& 1\end{pmatrix}.\]

Given a translation surface $X_\mu$ and a topological covering $f\colon Y\to X$, we obtain a translation structure $f^\ast\mu$ on $Y$ by precomposing small enough charts from $\mu$ by $f$. The map $p\colon Y_{f^*\mu}\to X_\mu$ is then usually called \emph{translation covering}. In this way, let us define the Veech group of an origami:

\begin{defi}\label{defi:ori-veechgroup}
Let $O=(p\colon X^*\to E^*)$ be an origami. Then we call 
\[\Gamma(O)\coloneqq \Gamma(X^*_{p^*\mu_0})\]
the \textit{Veech group} of $O$.
\end{defi}

Let us list some fundamental properties:

\begin{prop}\label{prop:ori-veechgroup}
 If $O=(p\colon X^*\to E^*)$ is an origami, and $\Gamma\coloneqq\Gamma(O)$ its Veech group, then we have:
\begin{aenum}
\item $\Gamma\subseteq \SL_2(\ZZ)$.
\item $\Gamma(X^*,B\cdot p^*\mu_0)=B\Gamma B^{-1}$ for every $B\in \SL_2(\RR)$.
\item The isomorphism classes of origamis that are affinely equivalent to $O$ are in bijection with the left cosets of $\Gamma$ in $\SL_2(\ZZ)$.
\item $[\SL_2(\ZZ):\Gamma]<\infty$.
\end{aenum}
\end{prop}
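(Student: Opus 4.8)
The plan is to read everything off the flat geometry of the translation surface $X^*_{p^*\mu_0}$. I will use throughout that every affine diffeomorphism of $X^*$ extends uniquely to the compact surface $X$: the points of $p^{-1}(\infty)$ are cone points of angle in $2\pi\ZZ$, an affine diffeomorphism distorts the flat metric by a bounded factor and hence extends to the metric completion, where it permutes $p^{-1}(\infty)$. Thus $\mathrm{Aff}^+(X^*_{p^*\mu_0})$ may be identified with the group of orientation-preserving diffeomorphisms of the pair $(X,p^{-1}(\infty))$ that are affine for $p^*\mu_0$.

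For part (a) I would first note that $X^*_{p^*\mu_0}$ has finite area $\deg O$, so an affine diffeomorphism $f$ is orientation- and area-preserving, whence $\mathrm{der}(f)\in\SL_2(\RR)$. To cut down to $\SL_2(\ZZ)$ I would use the lattice $\Lambda\coloneqq\{\int_c\omega\mid c\in H_1(X,p^{-1}(\infty);\ZZ)\}\subseteq\CC=\RR^2$ of relative periods of $\omega\coloneqq p^*(dz)$: since $\omega$ is pulled back from $E=\CC/\ZZ^2$, all these periods lie in $\ZZ^2$, and the two edges of a unit square already give the classes $(1,0)$ and $(0,1)$, so $\Lambda=\ZZ^2$. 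Because $\mathrm{der}(f)=A$ is a global datum (\ref{defbem:veechgroup}), we have $\int_{f_*c}\omega=A\int_c\omega$ while $f_*$ is an automorphism of $H_1(X,p^{-1}(\infty);\ZZ)$; hence $A\Lambda=\Lambda$, i.e.\ $A\in\GL_2(\ZZ)$, and with $\det A=1$ we get $A\in\SL_2(\ZZ)$.

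Part (b) is the same computation that gives $\Gamma(E_{B\cdot\mu_0})=B\SL_2(\ZZ)B^{-1}$ recorded before Definition \ref{defi:ori-veechgroup}: postcomposing the charts of $\mu\coloneqq p^*\mu_0$ with the global linear map $B$ turns them into $(B\cdot\mu)$-charts without changing which maps are affine, and a map that is $x\mapsto Ax+t$ in $\mu$-charts is $x\mapsto BAB^{-1}x+Bt$ in $(B\cdot\mu)$-charts, so $\mathrm{der}$ has image $B\Gamma(X^*_\mu)B^{-1}$ on $\mathrm{Aff}^+(X^*_{B\cdot\mu})=\mathrm{Aff}^+(X^*_\mu)$. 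For part (c) I would let each $A\in\SL_2(\ZZ)=\Gamma(E_{\mu_0})$ act on isomorphism classes of origamis by pulling $p$ back along the affine diffeomorphism $\alpha_A^{-1}$ of $E^*$ induced by $A$; the top edge of the Cartesian square is a homeomorphism, so $A\cdot O$ is again an origami of degree $\deg O$, and as a translation surface it is $(X^*,A\cdot p^*\mu_0)$, affinely equivalent to $O$. One checks this is a left action, and by \ref{defbem:veechgroup}(h) its stabiliser at $O$ is exactly $\Gamma$; it is transitive on the affine-equivalence class of $O$ because, by the argument of (a) applied to an affine equivalence, the derivative of any affine equivalence of origamis lies in $\SL_2(\ZZ)$, after which the two origamis are identified. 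Orbit--stabiliser then yields the bijection with the left cosets of $\Gamma$ in $\SL_2(\ZZ)$. Part (d) is immediate from (c): the index equals the number of isomorphism classes of origamis affinely equivalent to $O$, all of degree $\deg O$, and there are only finitely many origamis of a given degree by Proposition \ref{prop:ori-equiv}(b), exactly as in Corollary \ref{cor:dessins-finite-number}.

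The only genuinely non-formal point is (a): identifying the relative period lattice with $\ZZ^2$ on the nose and checking that affine diffeomorphisms really do act on $(X,p^{-1}(\infty))$ and on its relative homology compatibly with the derivative. Once (a) and \ref{defbem:veechgroup}(h) are available, (b)--(d) are bookkeeping; the careful matching of left versus right cosets in (c), and the verification that an affine equivalence with integral derivative underlies an honest isomorphism of origamis, are carried out in \cite[Section 1.3]{gs}.
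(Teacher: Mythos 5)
Your proposal is correct, and it is noticeably more self-contained than the paper's treatment, which at this point is essentially a citation: parts a) and b) are referred to Section 1.3 of \cite{gs}, part c) is reduced to Weitze-Schmithüsen's Proposition 3.3 (every affine diffeomorphism $X^\ast\to X'^\ast$ descends to an affine diffeomorphism of $E^\ast$), and part d) is proved exactly as you do it, by noting that affine equivalence preserves the degree and that there are only finitely many origamis of a given degree (Proposition \ref{prop:ori-equiv}, argued as in Corollary \ref{cor:dessins-finite-number}). For a) you replace the descent-to-the-torus argument by the relative period lattice of $\omega=p^\ast(dz)$: since $\omega$ is pulled back from $\CC/\ZZ^2$ the lattice is exactly $\ZZ^2$, an affine diffeomorphism extends to the pair $(X,p^{-1}(\infty))$ and transforms relative periods by its derivative, so the derivative lies in $\GL_2(\ZZ)\cap\SL_2(\RR)=\SL_2(\ZZ)$. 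This is a genuinely different, standard route, and it has the advantage of applying verbatim to an affine equivalence between two \emph{different} origamis, which is exactly what you need for the transitivity step in c). The one non-formal point you leave open in c) is the same one the paper outsources: that a translation equivalence between the translation surfaces of two origamis is an honest isomorphism of coverings (equivalently, that the stabiliser of $O$ under your $\SL_2(\ZZ)$-action is exactly $\Gamma$, not merely contains it); you defer this to \cite{gs}, the paper to Schmithüsen's descent result. If you want to stay self-contained, it can be done directly: for a translation $\varphi$ between the surfaces of two origamis $(p\colon X^\ast\to E^\ast)$ and $(p'\colon X'^\ast\to E^\ast)$, the map $x\mapsto p'(\varphi(x))-p(x)\in E$ (using the group structure of $E$) is locally constant, hence constant, and since both $p$ and $p'\circ\varphi$ have image exactly $E\setminus\{\infty\}$ the constant is $0$, i.e.\ $p'\circ\varphi=p$. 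With that supplied, your orbit--stabiliser formulation of c), together with b) and d), is complete and matches the paper's conclusions, including the identification of the index in d) with the count of isomorphism classes in c).
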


Proofs can be found in \cite{gs}: a) and b) can be found in Section 1.3 there; c) is elementary if we use Schmithüsen's Proposition 3.3 which states that any affine diffeomorphism $X^\ast\to X'^\ast$ descends to an affine diffeomorphism $E^\ast\to E^\ast$. Her Corollary 3.6 provides a proof for d). Though, part d) can be proven in a more elementary way, by noting that the $\SL_2(\ZZ)$ action on origamis $O=(p\colon X^\ast \to E^\ast)$ preserves the volume of $X^\ast$ and thus the degree of $p$. We can then conclude with the same argument as in (our) Corollary \ref{cor:dessins-finite-number}.

To calculate the Veech groups of the special origamis appearing later in this work, we use a rather different characterisation of Veech groups of origamis found by G.\ Weitze-Schmithüsen in \cite{gs}. Remember $\pi_1(E^\ast)\cong F_2$ and consider the group homomorphism $\varphi\colon\Aut(\pi_1(E^\ast))\to\Out(\pi_1(E^\ast))\cong\GL_2(\ZZ)$. Via the latter isomorphism, we define the “orientation preserving” (outer) automorphism groups $\Out^+(\pi_1(E^\ast))\coloneqq \SL_2(\ZZ)$ and $\Aut^+(\pi_1(E^\ast))\coloneqq\varphi^{-1}(\Out^+(\pi_1(E^\ast)))$.

\begin{satz}[G. Schmithüsen]\label{satz:gabi-vg}
Let $O\coloneqq(p\colon X^\ast\to E^\ast)$ be an origami. Then we have: 
\begin{aenum}
\item $\Gamma(O)=\varphi(\Stab(p_\ast\pi_1(X^\ast)))$.
\item If $f\in\Aut^+(\pi_1(E^\ast))$ with $\varphi(f)=A\in\SL_2(\ZZ)$, and the monodromy of $O$ is given by $m_p$, then the monodromy of $A\cdot O$ is given by $m_p\circ f$.
\end{aenum}
\end{satz}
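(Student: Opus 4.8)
The plan is to translate both statements into the combinatorial language of Proposition \ref{prop:ori-equiv}: the origami $O$ is encoded by its monodromy $m_p\colon\pi_1(E^\ast)\to S_d$, equivalently by the conjugacy class of $U\coloneqq p_\ast\pi_1(X^\ast)=m_p^{-1}(\Stab(1))$, and to use the criterion of Definition and Remark \ref{defbem:veechgroup}(h), that $A\in\Gamma(O)=\Gamma(X^\ast_{p^\ast\mu_0})$ holds iff the translation surfaces $X^\ast_{p^\ast\mu_0}$ and $X^\ast_{A\cdot p^\ast\mu_0}$ are isomorphic. I would prove (b) first, since it also produces the twisted origami needed in (a).

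For (b): a matrix $A\in\SL_2(\ZZ)$ acts linearly on $\CC=\RR^2$, preserves $\ZZ^2$, and hence descends to a homeomorphism $\bar A\colon E\to E$ fixing $\infty$; its restriction to $E^\ast$ is an affine diffeomorphism with $\mathrm{der}(\bar A)=A$, and reading off charts shows that $\bar A\colon E^\ast_{A\cdot\mu_0}\to E^\ast_{\mu_0}$ is in fact an isomorphism of translation surfaces. Since precomposition of charts commutes with the $\SL_2(\RR)$-action, $A\cdot p^\ast\mu_0=p^\ast(A\cdot\mu_0)$, so $\bar A\circ p\colon X^\ast_{p^\ast(A\cdot\mu_0)}\to E^\ast_{\mu_0}$ is a translation covering; by construction this origami is $A\cdot O$. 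Its monodromy is read off from path lifting (Proposition \ref{prop:coverings}): since $\bar A$ fixes $\infty$, the fibre $(\bar A\circ p)^{-1}(\infty)=p^{-1}(\infty)$ is unchanged, and a lift of $\gamma$ along $\bar A\circ p$ is a lift of $\bar A^{-1}\circ\gamma$ along $p$, whence $m_{A\cdot O}=m_p\circ(\bar A^{-1})_\ast$ (as a permutation representation, i.e.\ up to the simultaneous conjugation coming from the change of basepoint, which is harmless). Finally one checks $(\bar A)_\ast\in\Aut^+(\pi_1(E^\ast))$ with $\varphi((\bar A)_\ast)=A$: the induced action on $H_1(E^\ast)\cong H_1(E)\cong\ZZ^2$ is literally $A$, and $\varphi$ is by definition the passage to this $H_1$-action under $\Out(\pi_1(E^\ast))\cong\GL_2(\ZZ)$. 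Using $\bar A^{-1}$ in place of $\bar A$ if one insists on a genuine left action (so that the relevant automorphism has $\varphi$-image $A$ rather than $A^{-1}$), and observing that any two lifts of $A$ to $\Aut^+(\pi_1(E^\ast))$ differ by an inner automorphism and hence change $m_p\circ f$ only by conjugation, gives (b).

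For (a), combine (b) with \ref{defbem:veechgroup}(h): $A\in\Gamma(O)$ iff $X^\ast_{p^\ast\mu_0}\cong X^\ast_{A\cdot p^\ast\mu_0}$ as translation surfaces, iff $O\cong A\cdot O$ as origamis. By Proposition \ref{prop:ori-equiv} the latter means that $U$ and $f^{-1}(U)$ are conjugate in $\pi_1(E^\ast)$ for $f=(\bar A)_\ast$, i.e.\ that $f$ fixes the conjugacy class of $U$, i.e.\ $f\in\Stab(U)$, and therefore $A=\varphi(f)\in\varphi(\Stab(U))$; conversely, if $A=\varphi(f')$ for some $f'\in\Stab(U)$, then $f'$ and $f$ differ by an inner automorphism, so $f$ also stabilises the conjugacy class of $U$ and $A\in\Gamma(O)$. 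The only point that is not formal is that an isomorphism of the translation surfaces $X^\ast_{p^\ast\mu_0}\to X^\ast_{A\cdot p^\ast\mu_0}$ genuinely descends to an affine diffeomorphism of $E^\ast$ inducing the required automorphism of $\pi_1(E^\ast)$; this is the main obstacle, and I would either invoke Schmithüsen's descent result \cite[Prop.\ 3.3]{gs} directly, or argue via developing maps: $X^\ast_{p^\ast\mu_0}$ develops to $\CC$ with $p=\pi\circ\mathrm{dev}$ modulo $\ZZ^2$, a translation isomorphism intertwines the two developing maps up to a fixed translation, hence descends to $E^\ast$, and Lemma \ref{hilfssatz:decktrafo} then pins the induced base map down up to a deck transformation of $E^\ast/E^\ast$ (a translation). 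Everything else is bookkeeping with the monodromy formulas and the isomorphism $\Out(\pi_1(E^\ast))\cong\GL_2(\ZZ)$.
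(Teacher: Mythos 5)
Your proposal is essentially a correct reconstruction, but it is a genuinely different route from the paper for a simple reason: the paper does not prove Theorem \ref{satz:gabi-vg} at all, it attributes part a) to Theorem 1 and part b) to Proposition 3.5 of \cite{gs} and moves on. What you do is rebuild Weitze-Schmithüsen's argument from the definitions: realise $A\cdot O$ as $\bar A\circ p$ (equivalently as the translation surface $X^\ast_{A\cdot p^\ast\mu_0}$), read off its monodromy by path lifting, and reduce a) to the equivalence ``$A\in\Gamma(O)$ iff $O\cong A\cdot O$ as coverings of $E^\ast$'' via the subgroup/monodromy dictionary of Proposition \ref{prop:ori-equiv}. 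You correctly isolate the only non-formal ingredient, namely that an affine self-map of $X^\ast$ with derivative $A$ (equivalently a translation isomorphism $X^\ast_{p^\ast\mu_0}\to X^\ast_{A\cdot p^\ast\mu_0}$) descends to $E^\ast$; note that this is exactly \cite[Prop.\ 3.3]{gs}, which the paper itself invokes for Proposition \ref{prop:ori-veechgroup} c), so citing it means leaning on the same source the paper cites, while your developing-map alternative is viable but needs the two points you gloss made explicit: the holonomy of $p^\ast\mu_0$ lies in $\ZZ^2$, so $p$ is recovered as $\mathrm{dev}$ modulo $\ZZ^2$, and the descended affine map of $E$ must fix the puncture $\infty$, which forces its translation part to vanish (your phrase ``deck transformation of $E^\ast/E^\ast$'' is not the right object here; the relevant deck group is that of $\CC\setminus\ZZ^2\to E^\ast$, or one argues via Lemma \ref{hilfssatz:decktrafo} applied to a suitable normal covering). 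The remaining caveat is the $A$ versus $A^{-1}$ bookkeeping in b): with the natural identification $\Out^+(\pi_1(E^\ast))\cong\SL_2(\ZZ)$ via homology and with $A\cdot O=\bar A\circ p$, your computation yields $m_{A\cdot O}=m_p\circ f$ with $\varphi(f)=A^{-1}$, which is in fact the form the paper uses later (Proposition \ref{prop:STaction} computes $m\circ\phi^{-1}$ for a lift $\phi$ of the matrix); since the paper never writes down its definition of the $\SL_2(\ZZ)$-action on origamis, a complete write-up must fix that definition and the identification coherently before b) can be stated with a definite sign — your observation that the ambiguity is only by inner automorphisms and inversion, and hence harmless for a) and for all applications modulo $\Gamma(2)$, is correct.
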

The first part is Theorem 1 in said work, the second is the isomorphism $\beta$ from Proposition 3.5 there.

\subsection{Moduli and Teichmüller spaces of curves}
We begin by giving a somewhat rough definition of different versions of the (coarse) moduli space of compact Riemann surfaces. A very detailed reference on this subject is provided in \cite{ha}.

\begin{defi}\label{defi:moduli-space}
\begin{aenum}
\item Define the \textit{coarse moduli space of Riemann surfaces of genus $g$ with $n$ distinguished marked points} as
\[M_{g,n}\coloneqq \left\{(X,\,p_1,\ldots,p_n)\mid X \text{ \scriptsize compact R. s. of genus }g,\, p_i\in X,\, p_i\neq  p_j \text{ \scriptsize for } i\neq j\right\}/_\sim\]
where $(X,\,p_1,\ldots,p_n)\sim(Y,\,q_1,\ldots,q_n)$ if there is a biholomorphic map $\varphi\colon X\to Y$ with $\varphi(p_i)=q_i,\,i=1,\ldots,n$.
\item Define the \textit{coarse moduli space of Riemann surfaces of genus $g$ with $n$ non-distinguished marked points} as
\[M_{g,[n]}\coloneqq \left\{(X,\,p_1,\ldots,p_n)\mid X \text{ \scriptsize compact R. s. of genus }g,\, p_i\in X,\, p_i\neq  p_j \text{ \scriptsize for } i\neq j\right\}/_\sim\]
where $(X,\,p_1,\ldots,p_n)\sim(Y,\,q_1,\ldots,q_n)$ if there is a biholomorphic function $\varphi\colon X\to Y$ and a permutation $\pi\in S_n$, such that $\varphi(p_i)=q_{\pi(i)},\,i=1,\ldots,n.$
\item Finally, define the \textit{coarse moduli space of Riemann surfaces of genus $g$} as
\[M_g\coloneqq M_{g,0}=M_{g,[0]}.\]
\end{aenum}
\end{defi}
In fact, $M_{g,n}$ and $M_{g,[n]}$, which we defined just as sets, can be turned into complex quasi-projective varieties, or complex analytic spaces, of dimension $3g-3+n$ (whenever this expression is positive---we have $\dim(M_{1,0})=1$, and $\dim(M_{0,n})=0$ for $n\leq 3$). There are natural projections
\[M_{g,n}\to M_{g,[n]}\to M_g\]
by forgetting the order of the marked points, and totally forgetting the marked points.

All these versions also exist as schemes, which are all defined over $\ZZ$.

The usual analytical approach to understanding moduli spaces is Teichmüller theory. Let us recall the basic facts. We begin by giving the definition of Teichmüller spaces:
\begin{defbem}\label{defbem:teichmüller}
Let $S$ be a fixed compact Riemann surface of genus $g$ with $n$ marked points. (Let us write shortly that $S$ is of type $(g,n)$.)
\begin{aenum}
\item If $X$ is another surface of this type, a \textit{marking} on $X$ is an orientation preserving diffeomorphism $\varphi\colon S\to X$ which respects the marked points.
\item We define the \textit{Teichmüller space} of the surface $S$ as
\[\mathcal{T}(S)\coloneqq \left\{(X,\,\varphi)\mid X\text{ R.\,s. of type }(g,n),\,\varphi\colon S\to X \text{ a marking}\right\}/_\sim\]
where $(X,\,\varphi)\sim(Y,\,\psi)$ if $\psi\circ\varphi^{-1}\colon  X\to Y$ is homotopic to a biholomorphism respecting the marked points (where, of course, the homotopy shall fix the marked points).
\item If $S'$ is another surface of type $(g,n)$, then any choice of a marking $\varphi\colon S\to S'$ yields a bijection $\mathcal{T}(S')\to\mathcal{T}(S)$ by precomposing all markings with $\varphi$, which gives us the right to just write $\mathcal{T}_{g,n}$.
\end{aenum}
\end{defbem}
In the same manner as above, there exist also versions with non-ordered and without marked points, denoted by $\mathcal{T}_{g,[n]}$ and $\mathcal{T}_g$, respectively. It turns out that $\mathcal{T}_{g,n}$ is a complex manifold of dimension $3g-3+n$ whenever this expression is positive. Actually, it is isomorphic to a unit ball of that dimension. The group of orientation preserving diffeomorphisms of $S$, denoted by $\text{Diffeo}^+(S)$, acts on $\mathcal{T}(S)$ from the left by composition with the marking. It is clear that this action factors through the \emph{mapping class group} $\Sigma(S)\coloneqq \pi_0(\text{Diffeo}^+(S))$ and that its orbits are precisely the isomorphism types of Riemann surfaces of type $(g,n)$, so that we have
\[\mathcal{T}(S)/\Sigma(S)\cong M_{g,n}.\]
It is also true but far less obvious that $\Sigma(S)$ acts properly discontinous and with finite stabilisers, and that the above equation holds in the category of complex spaces.

Analogous statements hold for surfaces with $n$ non-ordered marked points. Note that compact surfaces of genus $g$ with finitely many points removed can be compactified uniquely and is thus naturally an element of $M_{g,[n]}$.

\subsection{Teichmüller discs and Teichmüller curves}
Let $X$ be a compact Riemann surface of genus $g$ with $n$ punctures, endowed with a translation structure. For $B\in\SL_2(\RR)$, denote by $X_B$ the Riemann surface that we get by endowing $X$ with the complex structure induced by $B\cdot\mu$. Then the identity map $\id\colon X=X_I\to X_B$ is a marking in the sense of Definition and Remark \ref{defbem:teichmüller} a). Note that this map is in general \textit{not} holomorphic! So we get a map
\[\theta\colon \SL_2(\RR)\to\mathcal{T}_{g,[n]},\,B\mapsto \left[(X_B,\,\id\colon X_I\to X_B)\right].\]
Since for $B\in \SL_2(\RR)$ we have that $z\mapsto B\cdot z$ is biholomorphic iff $B\in\text{SO}(2)$ it is easy to see that $\theta$ factors through $\text{SO}(2)\backslash\SL_2(\RR)\cong \Hp$. 
 We fix the latter bijection as $m\colon \text{SO}(2)\backslash\SL_2(\RR)\to\Hp,\,[A]\mapsto \overline{-A^{-1}(i)}$. The reason for this choice will become clear in a bit. The factor map
\[\overline{\theta}\colon \Hp \to \mathcal{T}_{g,[n]}\]
is injective. It is in fact biholomorphic to its image, and furthermore an isometry with respect to the standard hyperbolic metric on $\Hp$ and the Teichmüller metric on $\mathcal{T}_{g,[n]}$ as defined, for example, in \cite[6.4]{hub}. See \cite[2.6.5 and 2.6.6]{nag} for details. This leads to the following
\begin{defi}\label{defi:tm-disc}
Let $X_\mu\coloneqq (X,\,\mu)$ be a translation surface of type $(g,[n])$. Then, the isometric image
\[\Delta_{X_\mu}\coloneqq \overline{\theta}(\Hp)\subseteq\mathcal{T}_{g,[n]}\]
is called the \textit{Teichmüller disc} associated with $X_\mu$.
\end{defi}
The image of a Teichmüller disc $\Delta_{X_\mu}$ under the projection map into moduli space is, in general, not an algebraic subvariety. If the Veech group of the translation surface $X_{\mu}$ is a lattice in $\SL_2(\RR)$, i.e.\ if $\text{vol}(\Hp/\Gamma(X_\mu))<\infty$, then in fact the image of $\Delta_{X_\mu}$ in the moduli space is an algebraic curve, as stated in the following theorem. It is usually attributed to John Smillie, but cited from \cite{mcm}.

\begin{satz}[J.\ Smillie]\label{satz:tm-curve}
Let $X_\mu$ be a translation surface of type $(g,[n])$, and $\Delta_{X_\mu}$ its Teichmüller disc. Furthermore let $p\colon \mathcal{T}_{g,[n]}\to M_{g,[n]}$ be the projection. Then we have:
\begin{aenum}
\item $p(\Delta_{X_\mu})\subseteq M_{g,[n]}$ is an algebraic curve iff $\Gamma(X_\mu)$ is a lattice. It is then called the \textit{Teichmüller curve} associated to $X_\mu$.
\item In this case, the following diagram is commutative if we define $R\coloneqq \left(\begin{smallmatrix}-1& 0\\0&1\end{smallmatrix}\right)$:

\begin{center}
\begin{tikzpicture}[description/.style={fill=white,inner sep=2pt}]
\matrix (m) [matrix of math nodes, row sep=3.5em, column sep=3em, text height=1.5ex, text depth=0.25ex]
{ \Hp & \Delta_{X_{\mu}} & \mathcal{T}_{g,[n]}\\
\Hp/R\Gamma(X_{\mu})R^{-1}& p(\Delta_{X_{\mu}}) & M_{g,[n]}\\
};
\path[right hook->,font=\scriptsize] (m-1-1) edge node[auto] {$ \overline{\theta} $} (m-1-2);
\path[->,font=\scriptsize] (m-1-1) edge node[auto, swap] {$/R\Gamma(X_\mu)R^{-1}$} (m-2-1);
\path[right hook->,font=\scriptsize] (m-1-2) edge node[auto] {$ $} (m-1-3);
\path[->,font=\scriptsize] (m-1-2) edge node[auto] {$p_{|\Delta_{X_\mu}}$} (m-2-2);
\path[->,font=\scriptsize] (m-1-3) edge node[auto] {$p$} (m-2-3);
\path[->,font=\scriptsize] (m-2-1) edge node[auto] {$j$} (m-2-2);
\path[right hook->,font=\scriptsize] (m-2-2) edge node[auto] {$\iota$} (m-2-3);
\end{tikzpicture}
\end{center}
Furthermore, the map $j$ is the normalisation map for the algebraic curve $p(\Delta_{X_\mu})$.
\end{aenum}
\end{satz}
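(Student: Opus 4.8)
The idea is to analyse $p|_{\Delta_{X_\mu}}$ by first determining which mapping classes identify two points of the Teichmüller disc, and then to read off the claim from the uniformisation of the resulting quotient. Every $f\in\text{Aff}^+(X_\mu)$ induces a class in the mapping class group $\Sigma(S)$, and a direct computation with the explicit bijection $m\colon\text{SO}(2)\backslash\SL_2(\RR)\to\Hp$ fixed before Definition \ref{defi:tm-disc} shows that this class acts on $\Delta_{X_\mu}\cong\Hp$ as the Möbius transformation $z\mapsto R\,\text{der}(f)\,R^{-1}\cdot z$; the conjugation by $R$ together with the complex conjugation hidden in $m$ is present precisely to make this orientation preserving. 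Conversely, if $\psi\in\Sigma(S)$ identifies two distinct points of $\Delta_{X_\mu}$, then $\psi$ carries the totally geodesic, isometrically embedded disc $\Delta_{X_\mu}$ to one sharing at least two points with it; by the uniqueness of the Teichmüller disc through two given points this forces $\psi(\Delta_{X_\mu})=\Delta_{X_\mu}$, so $\psi\in\Stab_{\Sigma(S)}(\Delta_{X_\mu})$, which by the previous remark is the image of $\text{Aff}^+(X_\mu)$ and acts through $R\Gamma(X_\mu)R^{-1}$. Hence $p|_{\Delta_{X_\mu}}$ factors as
\[\Hp\ \twoheadrightarrow\ V\coloneqq\Hp/R\Gamma(X_\mu)R^{-1}\ \xrightarrow{\ j\ }\ M_{g,[n]},\]
with $j$ injective (two points of the disc over the same point of $M_{g,[n]}$ lie in one $R\Gamma(X_\mu)R^{-1}$-orbit), in particular birational onto its image; the diagram in b) commutes by construction.

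Assume $\Gamma(X_\mu)$ is a lattice. Then $V$ is a hyperbolic orbifold of finite area, so by the structure theory of finite-covolume Fuchsian groups its coarse space is a smooth quasi-projective curve; let $\overline{V}$ be its smooth projective completion, adding the finitely many cusps. The crucial step is to extend $j$ holomorphically to $\overline{j}\colon\overline{V}\to\overline{M}_{g,[n]}$, the Deligne--Mumford compactification: a cusp of $V$ is the image of a parabolic element of $\Gamma(X_\mu)$, i.e.\ of a multitwist in $\text{Aff}^+(X_\mu)$, so along the corresponding one-parameter family of flat surfaces certain flat cylinders have modulus tending to $\infty$, and the underlying Riemann surfaces converge --- after pinching the core curves of these cylinders --- to a stable curve; a removable-singularity argument then supplies the extension, mapping the added points into the boundary $\overline{M}_{g,[n]}\setminus M_{g,[n]}$. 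Being a holomorphic map of projective varieties, $\overline{j}$ is a morphism by GAGA, and since $p|_{\Delta_{X_\mu}}$ is non-constant its image $\overline{C}$ is a closed irreducible curve. Thus $p(\Delta_{X_\mu})=j(V)=\overline{C}\cap M_{g,[n]}$ is an algebraic curve; and $\overline{j}\colon\overline{V}\to\overline{C}$, a finite birational morphism from a smooth curve, is the normalisation of $\overline{C}$, so restricting over $M_{g,[n]}$ identifies $j$ with the normalisation of $p(\Delta_{X_\mu})$, as asserted in b).

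Conversely, suppose $C\coloneqq p(\Delta_{X_\mu})$ is an algebraic curve. Away from the orbifold locus $p$ is a locally isometric quotient and $\Delta_{X_\mu}$ is totally geodesic and isometric to $\Hp$, so the composite $\Hp\to C$ factors through the normalisation $\widetilde{C}$ of $C$ as an orbifold covering $\Hp\to\widetilde{C}$ with deck group $R\Gamma(X_\mu)R^{-1}$; hence $\widetilde{C}\cong\Hp/R\Gamma(X_\mu)R^{-1}$ as Riemann surfaces. On one hand $\widetilde{C}$, the smooth model of a quasi-projective curve, is a Riemann surface of finite type; on the other hand it is uniformised by $\Hp$. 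A discrete subgroup of $\PSL_2(\RR)$ whose quotient Riemann surface is of finite type necessarily has finite covolume --- its ends must be cusps, since a funnel end is not conformally a punctured disc and a surface with infinitely many ends or handles is not of finite type. Therefore $R\Gamma(X_\mu)R^{-1}$, and with it $\Gamma(X_\mu)$, is a lattice.

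The step I expect to be the main obstacle is the algebraisation in the forward direction: verifying rigorously that $j$ extends holomorphically over the cusps of $V$ into $\overline{M}_{g,[n]}$, i.e.\ that the degenerating families of flat surfaces have well-defined stable limits varying holomorphically in the parameter; once this is established, GAGA yields algebraicity of the image immediately. The birationality of $j$ --- which rests on the uniqueness of a Teichmüller disc through two points --- and the Fuchsian-group fact invoked in the converse are the remaining, more routine, points. For all of this I would follow \cite{mcm} and the standard references on $\overline{M}_{g,[n]}$.
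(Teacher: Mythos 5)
First, a remark on the comparison: the paper does not prove Theorem \ref{satz:tm-curve} at all --- it imports it, citing \cite[2.4]{hs} and \cite[Proposition 3.2]{loc} for a complete proof. So your sketch can only be measured against the standard arguments in those references, whose overall route (action of $\text{Aff}^+(X_\mu)$ on $\Delta_{X_\mu}$ through $R\,\text{der}(\cdot)\,R^{-1}$, factorisation over $\Hp/R\Gamma(X_\mu)R^{-1}$, extension over the cusps into the Deligne--Mumford boundary by pinching cylinder core curves, and Smillie's closedness argument for the converse) you do follow in outline.

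There is, however, a genuine gap at the step you treat as routine: the claim that any mapping class identifying two points of $\Delta_{X_\mu}$ stabilises the disc, hence that $j$ is \emph{injective}. If $\psi\in\Sigma(S)$ satisfies $\psi(x)=y$ with $x,y\in\Delta_{X_\mu}$, this produces exactly \emph{one} common point of $\psi(\Delta_{X_\mu})$ and $\Delta_{X_\mu}$, namely $y$; your assertion that the two discs share ``at least two points'' has no justification, and the uniqueness of the Teichmüller disc through two points therefore cannot be invoked. Indeed, through any point of $\mathcal{T}_{g,[n]}$ there pass many distinct Teichmüller discs, so nothing forces $\psi(\Delta_{X_\mu})=\Delta_{X_\mu}$, and the conclusion you draw is false in general: the fibres of $p_{|\Delta_{X_\mu}}$ need not be single $R\Gamma(X_\mu)R^{-1}$-orbits, and $p(\Delta_{X_\mu})$ may have self-intersections. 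This is precisely why the theorem asserts only that $j$ is the \emph{normalisation} map and not an isomorphism onto the image. The correct substitute is generic injectivity: two distinct totally geodesic Teichmüller discs meet in at most one point (a common geodesic already determines the disc), and proper discontinuity of the $\Sigma(S)$-action shows that only a discrete set of points of $\Delta_{X_\mu}$ can lie on a translate $\psi(\Delta_{X_\mu})\neq\Delta_{X_\mu}$; hence the non-injectivity locus of $j$ is discrete, which rules out that the extended map $\overline{V}\to\overline{C}$ has degree $\geq 2$ and yields birationality. With that replacement your forward direction is essentially the argument of the cited references. Note also that your converse leans on the same unproved identification of fibres with orbits: from ``$C$ algebraic'' alone, a non-constant holomorphic map $V\to \widetilde{C}$ does not bound the topology of $V$ (the universal covering of a compact curve maps onto it holomorphically), so to conclude that $V=\Hp/R\Gamma(X_\mu)R^{-1}$ is of finite type you again need the discreteness/generic-injectivity statement together with properness of $V\to M_{g,[n]}$; this direction is the substantial content of Smillie's theorem and is more than a routine Fuchsian-group fact.
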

 A  complete proof of this theorem as it is stated here can be found in \cite[2.4]{hs} or \cite[Proposition 3.2]{loc}.

We have learned in \ref{prop:ori-veechgroup} that Veech groups of origamis have finite index in $\SL_2(\ZZ)$, so in particular they are lattices. We get: 
\begin{kor}\label{kor:origamicurve} Let $O = (f\colon X^*\to E^*)$ be an origami, where $X^*$ is of type $(g,[n])$. Then,
\[C(O)\coloneqq p(\Delta_{X_{f^*\mu_0}})\subseteq M_{g,[n]}\]
is an algebraic curve which we call the \textit{origami curve} defined by $O$.
\end{kor}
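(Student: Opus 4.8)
The plan is to obtain this as an immediate consequence of Smillie's theorem (Theorem \ref{satz:tm-curve}) combined with the finiteness statement for Veech groups of origamis. By Definition \ref{defi:ori-veechgroup}, the translation surface naturally attached to $O=(f\colon X^\ast\to E^\ast)$ is $X^\ast_{f^\ast\mu_0}$, and its Veech group is precisely $\Gamma(O)=\Gamma(X^\ast_{f^\ast\mu_0})$. So the first step is just to record, via Proposition \ref{prop:ori-veechgroup}(d), that $[\SL_2(\ZZ):\Gamma(O)]<\infty$.

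The second step is to upgrade "finite index in $\SL_2(\ZZ)$" to "lattice in $\SL_2(\RR)$". Here I would recall the classical fact that $\SL_2(\ZZ)$ is itself a lattice in $\SL_2(\RR)$, i.e.\ $\text{vol}(\Hp/\SL_2(\ZZ))<\infty$, which one sees from the standard fundamental domain of the modular curve. Then for a subgroup $\Gamma\leq\SL_2(\ZZ)$ of finite index $m$, a fundamental domain for $\Gamma$ acting on $\Hp$ is obtained as a union of $m$ translates of one for $\SL_2(\ZZ)$, so $\text{vol}(\Hp/\Gamma)=m\cdot\text{vol}(\Hp/\SL_2(\ZZ))<\infty$; hence $\Gamma(O)$ is a lattice. (The same reasoning applies to the conjugate $R\Gamma(O)R^{-1}$ appearing in Theorem \ref{satz:tm-curve}(b), since conjugation preserves covolume.)

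The final step is to apply Theorem \ref{satz:tm-curve}(a) with the translation surface $X_\mu\coloneqq X^\ast_{f^\ast\mu_0}$ of type $(g,[n])$: since its Veech group is a lattice, the image $p(\Delta_{X^\ast_{f^\ast\mu_0}})$ under the projection $p\colon\mathcal{T}_{g,[n]}\to M_{g,[n]}$ is an algebraic curve, which is exactly the assertion defining $C(O)$. I do not expect any genuine obstacle in this corollary: all the substantive work is already packaged into Proposition \ref{prop:ori-veechgroup}(d) and Theorem \ref{satz:tm-curve}, and the only elementary point worth spelling out is the multiplicativity of the hyperbolic covolume under passage to a finite-index subgroup, which is what bridges the two.
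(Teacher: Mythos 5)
Your argument is correct and follows exactly the route the paper takes: Proposition \ref{prop:ori-veechgroup} d) gives $[\SL_2(\ZZ):\Gamma(O)]<\infty$, hence $\Gamma(O)$ is a lattice, and Theorem \ref{satz:tm-curve} a) then yields that $p(\Delta_{X^\ast_{f^\ast\mu_0}})$ is an algebraic curve. Your only addition is spelling out the (standard) multiplicativity of hyperbolic covolume for finite-index subgroups, which the paper leaves implicit in the phrase ``so in particular they are lattices.''
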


An origami curve does not determine a unique origami in general. In fact, we have: 
\begin{prop}\label{prop:aff-eq-origamis}
Let $O,\,O'$ be origamis. Then we have $C(O)=C(O')$ iff $O$ and $O'$ are affinely equivalent.
\end{prop}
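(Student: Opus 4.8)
The claim is that two origamis $O,O'$ define the same origami curve $C(O)=C(O')$ in $M_{g,[n]}$ if and only if they are affinely equivalent. The plan is to trace through the construction of the origami curve as the image of a Teichmüller disc and unravel what equality of images means. For the \emph{if} direction: suppose $O=(p\colon X^\ast\to E^\ast)$ and $O'=(p'\colon X'^\ast\to E^\ast)$ are affinely equivalent, so there is an affine diffeomorphism $h\colon X^\ast_{p^\ast\mu_0}\to X'^\ast_{p'^\ast\mu_0}$ with $\operatorname{der}(h)=:B\in\SL_2(\ZZ)$ (it lies in $\SL_2(\ZZ)$ by Proposition~\ref{prop:ori-veechgroup}(a) and the volume argument, since both surfaces have the same degree). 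Then for every $A\in\SL_2(\RR)$, the map $h$ is also an isomorphism of translation surfaces $X^\ast_{AB\cdot p^\ast\mu_0}\to X'^\ast_{A\cdot p'^\ast\mu_0}$ (post-composing charts by $AB$ versus $A$ and using that $h$ has derivative $B$), so the two marked Riemann surfaces $X_{AB}$ and $X'_A$ agree in $M_{g,[n]}$. Hence $\overline{\theta}'(\Hp)$ and $\overline{\theta}(\Hp)$ have the same image under $p\colon\mathcal{T}_{g,[n]}\to M_{g,[n]}$: precomposition with the Möbius action of $B$ on $\Hp$ just reparametrises the disc. This gives $C(O')=C(O)$.

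For the \emph{only if} direction: suppose $C(O)=C(O')$. By Theorem~\ref{satz:tm-curve}(b), both $C(O)$ and $C(O')$ are the (normalisation-)images of the Teichmüller discs $\Delta_{X^\ast_{p^\ast\mu_0}}$ and $\Delta_{X'^\ast_{p'^\ast\mu_0}}$; since these discs are isometrically embedded copies of $\Hp$ and their images in $M_{g,[n]}$ coincide, we may pick a point in the common image and lift it along both maps $\overline{\theta},\overline{\theta}'$. This produces some $A\in\SL_2(\RR)$ so that $X_A$ (with its marking) and $X'_{A'}$ (with its marking) represent the \emph{same} point of $M_{g,[n]}$, i.e.\ there is a biholomorphism $X_A\to X'_{A'}$ respecting the (unordered) punctures. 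The task is then to upgrade this single biholomorphism of underlying Riemann surfaces into a global affine equivalence of the translation surfaces. Here one uses that a Teichmüller disc, as a subset of Teichmüller space, carries the translation structure along the whole disc, and that two Teichmüller discs with a common image point must in fact coincide as subsets of $\mathcal{T}_{g,[n]}$ (this is where one invokes that distinct Teichmüller geodesic discs meet in at most one point, or the rigidity of the Teichmüller metric — cf.\ \cite[2.6.5, 2.6.6]{nag} and \cite{hs}). Once $\Delta_{X^\ast_{p^\ast\mu_0}}=\Delta_{X'^\ast_{p'^\ast\mu_0}}$ as embedded discs, comparing the two $\SL_2(\RR)$-parametrisations shows they differ by right multiplication by some $B\in\GL_2(\RR)$, and the corresponding biholomorphism is affine with derivative $B$; tracing base points shows $B\in\SL_2(\ZZ)$, and Schmithüsen's descent result (any affine diffeomorphism $X^\ast\to X'^\ast$ descends to one of $E^\ast$, see the discussion after Proposition~\ref{prop:ori-veechgroup}) confirms this is genuinely an affine equivalence of origamis.

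I expect the main obstacle to be the \emph{only if} direction, specifically the step that promotes a point-wise coincidence of the two origami curves in moduli space to a global identification of the two Teichmüller discs and hence of the translation structures. The subtlety is that $p\colon\mathcal{T}_{g,[n]}\to M_{g,[n]}$ is a quotient by the mapping class group, so a priori equality of images only gives, over each point, that the two discs meet the same $\Sigma$-orbit; one must argue that after adjusting by a mapping class the two discs literally agree inside $\mathcal{T}_{g,[n]}$, using that an isometrically embedded $\Hp$ is totally geodesic and that two such through a common point with the same tangent data coincide — and that the relevant tangent direction is recorded by the translation structure (the holomorphic quadratic differential $\,\mathrm{d}z^2$). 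This is essentially the content of the cited references on Teichmüller discs of translation surfaces; in a self-contained writeup this is the point that needs the most care, whereas the \emph{if} direction and the final bookkeeping with derivatives and base points are routine.
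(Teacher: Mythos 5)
Your \emph{if} direction is essentially fine: an affine diffeomorphism with derivative $B$ identifies $X'^\ast_{p'^\ast\mu_0}$ with $X^\ast_{B\cdot p^\ast\mu_0}$ as translation surfaces, so the two $\SL_2(\RR)$-orbits of complex structures differ only by the reparametrisation $A\mapsto AB$, and the images in $M_{g,[n]}$ coincide. (Your aside that $B\in\SL_2(\ZZ)$ "since both surfaces have the same degree" is backwards --- equality of degrees is a consequence of Schmithüsen's descent result, not a hypothesis --- but nothing in this direction needs $B$ to be integral, or even of determinant one, since a homothety does not change the complex structure.) Note also that the paper itself gives no proof of this proposition; it refers to \cite{hs2}, Proposition 5 b), so your argument has to stand on its own.

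The \emph{only if} direction has a genuine gap, and it sits exactly at the step you flag. You lift a single common point of $C(O)=C(O')$ and then assert that "two Teichmüller discs with a common image point must in fact coincide as subsets of $\mathcal{T}_{g,[n]}$". That statement is false: through any point of Teichmüller space there pass infinitely many pairwise distinct Teichmüller discs, one for each ray of holomorphic quadratic differentials on the underlying surface, so a common point (even a genuinely common point upstairs, let alone two lifts lying merely in the same $\Sigma$-orbit) forces nothing. The correct rigidity statement is weaker: two Teichmüller discs sharing \emph{two} distinct points coincide, because two points determine a unique Teichmüller geodesic and the geodesic determines the generating quadratic differential, hence the disc. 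To invoke this you must use the equality of the whole curves, not of one point: from $p(\Delta')\subseteq p(\Delta)$ one gets $\Delta'\subseteq\bigcup_{\varphi\in\Sigma}\varphi(\Delta)$, a countable union of closed subsets of $\Delta'$, and a Baire-category argument produces a single mapping class $\varphi$ with $\varphi(\Delta)\cap\Delta'$ infinite, whence $\varphi(\Delta)=\Delta'$. Even then you are not done: you still need that a mapping class carrying one Teichmüller disc onto another is represented by an \emph{affine} diffeomorphism of the two flat structures (the identification of the stabiliser, respectively mapper, of a disc in the mapping class group with the affine group); your closing remark about "comparing the two $\SL_2(\RR)$-parametrisations" presupposes this fact rather than proving it. So the proposal correctly isolates where the difficulty lies but replaces the needed argument by an incorrect uniqueness claim, and the derivation of an affine equivalence from the identification of the discs is also left unjustified.
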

A proof can be found in \cite[Proposition 5 b)]{hs2}.

\subsection{Strebel directions and cylinder decompositions}
We will not give any proofs here, for details see for example Sections 3.2 and 3.3 in \cite{kk} and Section 4 in \cite{hs}.

Note that translations on $\RR^2$ are isometries with respect to the standard Euclidean metric, so that on a translation surface $X$, we get a global metric by glueing together the local Euclidean metrics from the charts. It is called the \emph{flat metric} on $X$. A geodesic path $\gamma$ w.r.t.\ this metric is locally a straight line, i.e.\ it is locally of the form $t\mapsto t\cdot v+w$, where $v\in\RR^2$ is independent of the choice of charts. We call $v$ (or rather its equivalence class in $\Pe(\RR)$) the \emph{direction} of $\gamma$. We call $\gamma$ \emph{maximal} if its image is not properly contained in the image of another geodesic path.

\begin{defi}
Let $X_\mu$ be a translation surface of type $(g,\,n)$.
\begin{aenum}
\item A direction $v\in\Pe(\RR)$ is called \textit{Strebel} if every maximal geodesic path on $X_\mu$ with direction $v$ is either closed, or a \textit{saddle connection} (i.e.\ it connects two punctures of $X_\mu$).
\item We call two Strebel directions $v,\,v'\in\Pe(\RR)$ \textit{equivalent} if there is an $A\in \text{P}\Gamma(X_\mu)$ such that $A\cdot v=v'$.
\end{aenum}
\end{defi}

Now, if $v$ is a Strebel direction for $X_\mu$, a \textit{cylinder} in $X_\mu$ is the image of a homeomorphism $c\colon (0,\,1)\times S^1\to U\subseteq X_\mu$, where $U$ is an open subset of $X_\mu$,  with the condition that for every $s\in(0,\,1)$, the restriction to $\{s\}\times S^1$ is a closed geodesic. A cylinder is called \textit{maximal} if it is not properly contained in another cylinder. We note:
\begin{bem}
With the exception of the case $(g,\,n)=(1,\,0)$, the maximal cylinders of $X_\mu$ in the Strebel direction $v$ are the connected components of $X_\mu\setminus S$, where $S$ is the union of the images of all saddle connections in direction $v$.
\end{bem}

Let us restrict to origamis now and summarise the situation in this case:

\begin{prop}\label{prop:ori-strebel-cylinders}
Let $O=(p\colon X^*\to E^*)$ be an origami. Then we have:
\begin{aenum}
\item There is a bijection between the following sets:
    \begin{itemize}
    \item Equivalence classes of Strebel directions of $O$,
    \item Conjugacy classes of maximal parabolic subgroups in $\text{P}\Gamma(O)$,
    \item Punctures (called \textit{cusps}) of the normalisation of the origami curve, $\Hp/R\Gamma(O)R^{-1}$.
    \end{itemize}
\item The vector $\left(\begin{smallmatrix}1\\0\end{smallmatrix}\right)$ is a Strebel direction of $O$, called its \textit{horizontal} Strebel direction.
\item Any maximal parabolic subgroup of $\text{P}\Gamma(O)$ is generated by the equivalence class of a matrix of the form $gT^wg^{-1}$, for some $w\in\NN,\,g\in\SL_2(\ZZ)$.
\item The Strebel direction corresponding to a maximal parabolic subgroup $\langle \overline{gT^wg^{-1}}\rangle$ is  $v_g\coloneqq g\cdot\left(\begin{smallmatrix}1\\0\end{smallmatrix}\right)$. The maximal cylinders of $O$ in this Strebel direction are the maximal horizontal cylinders of the origami $g^{-1}\cdot O$.
\end{aenum}
\end{prop}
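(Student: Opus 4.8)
The plan is to obtain the proposition by combining the purely group-theoretic bijection between cusps and conjugacy classes of maximal parabolic subgroups of a lattice with the geometric bijection between such subgroups of the Veech group and Strebel directions, and then to identify the individual objects by tracking the normalisations fixed above.

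First I would pin down the Strebel directions. Since $E^\ast$ decomposes, in the horizontal direction, into a single open cylinder with the puncture $\infty$ lying on its boundary, and $p\colon X^\ast\to E^\ast$ is an unramified covering, pulling this decomposition back exhibits $X^\ast_{p^\ast\mu_0}$ as a finite union of horizontal cylinders whose boundaries are the lifts of the unique horizontal saddle connection of $E^\ast$; every maximal horizontal geodesic is therefore closed or a saddle connection, which is (b). Running the same argument in an arbitrary direction $v\in\Pe(\QQ)$ shows that every rational direction is Strebel, while for $v\notin\Pe(\QQ)$ the geodesics on $E^\ast$ are dense and their lifts to $X^\ast$ are neither closed nor of finite length, so $v$ is not Strebel. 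Hence the Strebel directions of $O$ are exactly $\Pe(\QQ)=\SL_2(\ZZ)\cdot\left(\begin{smallmatrix}1\\0\end{smallmatrix}\right)$, which gives the shape $v_g=g\cdot\left(\begin{smallmatrix}1\\0\end{smallmatrix}\right)$ used in (c) and (d); moreover, after applying a suitable $g^{-1}\in\SL_2(\ZZ)$ the cylinders in direction $v$ become horizontal cylinders of an origami and hence all have height $1$, so their moduli are commensurable and a common power of the Dehn twists along the core curves is a well-defined affine diffeomorphism of $X^\ast_{p^\ast\mu_0}$ whose derivative is parabolic fixing $v$ and hence lies in $\Gamma(O)$.

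For (c), I would use that $\Gamma(O)$ is a lattice contained in $\SL_2(\ZZ)$ (Theorem \ref{satz:tm-curve}, Proposition \ref{prop:ori-veechgroup}), so $\text{P}\Gamma(O)$ has finite index in $\PSL_2(\ZZ)$; a maximal parabolic subgroup is then a finite-index subgroup of a cusp stabiliser of $\PSL_2(\ZZ)$, hence infinite cyclic, and since every parabolic of $\SL_2(\ZZ)$ fixes a point of $\Pe(\QQ)=\SL_2(\ZZ)\cdot\infty$ and is therefore conjugate to some $\pm T^w$, a generator has the form $\overline{gT^wg^{-1}}$ with $g\in\SL_2(\ZZ)$, $w\in\NN$. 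For (a), I would first invoke the standard fact that for a lattice $\Lambda$ the cusps of $\Hp/\Lambda$ biject with the conjugacy classes of maximal parabolic subgroups of $\Lambda$, applied to $\Lambda=R\Gamma(O)R^{-1}$ (conjugation by $R$ changes the model only by an isometry). I would then map the conjugacy class of $\langle\overline{gT^wg^{-1}}\rangle$ to the equivalence class of $v_g$: this is well defined because conjugating the parabolic by $h\in\text{P}\Gamma(O)$ replaces $v_g$ by $h\cdot v_g$; it is surjective since every Strebel direction is some $v_g$; and it is injective because an affine diffeomorphism realising $v\sim v'$ conjugates the multi-twists built above, while conversely the multi-twist attached to a Strebel direction generates, inside the cyclic parabolic subgroup fixing that direction, exactly one maximal parabolic subgroup of $\text{P}\Gamma(O)$.

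Finally, (d) follows by transporting along $g^{-1}$: by Proposition \ref{prop:ori-veechgroup} b) the linear identity map $X^\ast_{p^\ast\mu_0}\to X^\ast_{g^{-1}\cdot p^\ast\mu_0}$, which realises the origami $g^{-1}\cdot O$, has derivative $g^{-1}$, so it carries the cylinders of $O$ in direction $v_g$ onto the horizontal cylinders of $g^{-1}\cdot O$; and the derivative of the corresponding multi-twist, being parabolic with fixed direction $v_g$, is conjugate to a power of $T$ precisely by $g$, which matches $\langle\overline{gT^wg^{-1}}\rangle$ with the class of $v_g$. I expect the real work to be not a single deep theorem but the careful bookkeeping in (a) and (d): checking, against the normalisation $m([A])=\overline{-A^{-1}(i)}$ and the $R$-conjugation in Theorem \ref{satz:tm-curve}, that the cusp fixed by $\overline{gT^wg^{-1}}$ is indeed the one labelled by $v_g$ rather than by some other rational direction, and making sure the multi-twist really is a genuine affine diffeomorphism — which is exactly where the height-one structure of horizontal origami cylinders, and hence commensurability of moduli, enters.
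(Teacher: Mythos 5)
Your sketch is essentially correct, but there is nothing in the paper to compare it against line by line: the paper does not prove Proposition \ref{prop:ori-strebel-cylinders} at all, it simply refers to Sections 3.2 and 3.3 of \cite{kk}. What you write is the standard argument that the reference also follows: rational directions are exactly the Strebel directions because $g^{-1}$ turns the direction $v_g=g\cdot\left(\begin{smallmatrix}1\\0\end{smallmatrix}\right)$ into the horizontal one of the square-tiled surface $X^*_{g^{-1}\cdot p^*\mu_0}$; each such direction carries a nontrivial parabolic in $\text{P}\Gamma(O)$; parabolic subgroups of a Fuchsian group share a fixed point, so the maximal one over $v_g$ is $\text{P}\Gamma(O)\cap\Stab(v_g)$, infinite cyclic and generated by the class of some $gT^wg^{-1}$; and the cusp--parabolic correspondence for lattices, transported by $R$-conjugation, gives the third set in a). Two points deserve a word of care. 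First, your claim that all maximal horizontal cylinders of an origami have height $1$ is correct for the punctured surface $X^*$ as defined here (every vertex of the square tiling lies over $\infty$, hence is a puncture), but be aware that in Theorem \ref{satz:cyl-decomposition} the paper implicitly measures cylinders with respect to cone points only, so heights $2$ occur there; for your multitwist argument only commensurability of the moduli matters, and that holds since all widths and heights are integers, so nothing breaks. Second, the existence of a nontrivial parabolic in $\text{P}\Gamma(O)$ fixing a given rational direction already follows from $[\SL_2(\ZZ):\Gamma(O)]<\infty$ (Proposition \ref{prop:ori-veechgroup} d)), since the intersection of $\text{P}\Gamma(O)$ with the cyclic stabiliser of that cusp in $\PSL_2(\ZZ)$ then has finite index in it; the Thurston--Veech multitwist is thus not strictly needed for surjectivity in a), though it is the natural geometric link you exploit, together with the derivative computation for $\id\colon X^*_{p^*\mu_0}\to X^*_{g^{-1}\cdot p^*\mu_0}$, to pin down part d) against the normalisations fixed in Theorem \ref{satz:tm-curve}.
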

Proofs can be found in Sections 3.2 and 3.3 of \cite{kk}.

\subsection{The action of $\absGal$ on origami curves}
As the way we constructed Teichmüller curves is clearly of analytical nature, it may be surprising that they have interesting arithmetic properties. This kind of connection reminds of the Theorem of Belyi, which we can indeed use to prove a small part of the following

\begin{prop}\label{prop:ori-curves-arithmetic}
Let $O=(p\colon X^*\to E^*)$ be an origami and $C(O)\subseteq M_{g,[n]}$ its Teichmüller curve.
\begin{aenum}
\item Then, the normalisation map $j\colon \Hp/R^{-1}\Gamma(O)R\to C(O)$ and the inclusion $\iota\colon C(O)\hookrightarrow M_{g,[n]}$ are defined over number fields.
\item Let $\sigma\in\absGal$ be a Galois automorphism, and $O^\sigma=(p^\sigma\colon(X^*)^\sigma\to E^*)$ the Galois conjugate origami\footnote{Note here that we fixed the choice of $E=\CC/\ZZ^2$. We have $j(E)=1728\in\QQ$, and thus $E$ is defined over $\QQ$.}. Then we have\footnote{Strictly speaking, we should use a notation like $\iota_{C(O)}$ to distinguish the embeddings of different origami curves. We suppress the index for reasons of simplicity and bear in mind that the following formula has two different morphisms called $\iota$.}
\[\iota^\sigma((C(O))^\sigma)=\iota(C(O^\sigma))\subseteq M_{g,[n]},\]
so in particular $C(O^\sigma)\cong (C(O))^\sigma$.
\end{aenum}
\end{prop}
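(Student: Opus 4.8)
\emph{Proof plan.} The argument rests on the fact (recorded above) that $M_{g,[n]}$ is a coarse moduli scheme defined over $\QQ$. Consequently every $\sigma\in\absGal$, after being extended to an automorphism of $\CC$ (harmless, since the relevant finite coverings of $E^\ast$, and in particular the origami $O$ itself, descend to $\QQq$), acts on the set $M_{g,[n]}(\CC)$ through a $\sigma$-semilinear automorphism of $M_{g,[n]}\times_\QQ\CC$; in particular it is a homeomorphism for the Zariski topology, and it is compatible with Galois conjugation of the parametrised objects, i.e.\ if $x$ is the moduli point of a marked Riemann surface $X_x$ then $\sigma\cdot x$ is the moduli point of $(X_x)^\sigma$ (this is precisely why $\absGal$ acts on origami curves at all, cf.\ Definition and Remark \ref{defbem:aut-K-action}). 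The first concrete step is to unwind Smillie's diagram (Theorem \ref{satz:tm-curve}, Corollary \ref{kor:origamicurve}): as a set of complex points, $\iota(C(O))$ equals $\{\,x_B\mid B\in\SL_2(\RR)\,\}$, where $x_B$ is the moduli point of the compact surface underlying the translation structure $B\cdot p^\ast\mu_0$, and this surface carries a canonical origami covering $p_B\colon X_B^\ast\to E_B^\ast$ with $E_B=\CC/(B\cdot\ZZ^2)$.

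For part b), fix $\sigma$. Applying $\sigma$ to the covering $p_B$ gives $(p_B)^\sigma\colon(X_B^\ast)^\sigma\to(E_B^\ast)^\sigma$; since $(E_B)^\sigma$ is again an elliptic curve and $(p_B)^\sigma$ is ramified over a single point, Proposition \ref{prop:ori-equiv} shows this is again an origami, and Corollary \ref{kor:origamicurve} shows that its moduli point --- which, by the compatibility above, is exactly $\sigma\cdot x_B$ --- lies on $\iota\bigl(C((p_B)^\sigma)\bigr)$. I would then prove the \emph{key claim} that $(p_B)^\sigma$ is affinely equivalent to $O^\sigma=p^\sigma$ (identifying $(E^\ast)^\sigma$ with $E^\ast$, legitimate because $j(E)=1728\in\QQ$, so $E$ is defined over $\QQ$). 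Granting the claim, Proposition \ref{prop:aff-eq-origamis} gives $C((p_B)^\sigma)=C(O^\sigma)$, hence $\sigma\cdot x_B\in\iota(C(O^\sigma))$ for every $B$, i.e.\ $\sigma\bigl(\iota(C(O))\bigr)\subseteq\iota(C(O^\sigma))$. Applying the same with $(O,\sigma)$ replaced by $(O^\sigma,\sigma^{-1})$ gives the opposite inclusion, so $\sigma\bigl(\iota(C(O))\bigr)=\iota(C(O^\sigma))$; rewriting the left-hand side in the notation of Definition and Remark \ref{defbem:aut-K-action}, this is exactly $\iota^\sigma\bigl((C(O))^\sigma\bigr)=\iota(C(O^\sigma))$, and since $\iota^\sigma$ is an immersion this also yields $(C(O))^\sigma\cong C(O^\sigma)$.

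It remains to justify the key claim, which is where the real work is. I would translate affine equivalence into the combinatorial language of Theorem \ref{satz:gabi-vg}: by that theorem together with Proposition \ref{prop:aff-eq-origamis}, the affine equivalence class of an origami is the orbit of the conjugacy class of the finite-index subgroup $p_\ast\pi_1(X^\ast)\le\pi_1(E^\ast)\cong F_2$ under the outer action of $\Aut^+(\pi_1(E^\ast))$, i.e.\ under $\SL_2(\ZZ)$. Now view this conjugacy class inside the profinite completion $\widehat{F_2}=\pi_1^{\mathrm{et}}(E^\ast_{\QQq})$; because $E$ is defined over $\QQ$, the fundamental exact sequence of $E^\ast$ over $\QQ$ equips $\widehat{F_2}$ with an outer action of $\absGal$, and the operation $O\mapsto O^\sigma$ on finite coverings of $E^\ast$ corresponds to this $\absGal$-action on conjugacy classes of open finite-index subgroups. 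Since the mapping-class-group action (which is the $\SL_2(\ZZ)$ above) and the $\absGal$-action on $\widehat{F_2}$ commute --- both being outer and of independent origin --- $\sigma$ carries the affine equivalence class of $O$ to that of $O^\sigma$, which is the claim. \emph{This étale/combinatorial identification is the main obstacle: one must carefully match the topological operation $O\mapsto O^\sigma$ with the Galois action on open subgroups of $\widehat{F_2}$ and check that it is equivariant for the $\SL_2(\ZZ)$-structure governing affine equivalence.}

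Finally, part a) follows from what we have shown together with a finiteness input. There are only finitely many origamis of a given degree (Proposition \ref{prop:ori-equiv} b)), so the equivariance $\sigma(\iota(C(O)))=\iota(C(O^\sigma))$ just established shows that the $\Aut(\CC)$-orbit of the algebraic subvariety $\iota(C(O))\subseteq M_{g,[n]}$ is finite; an irreducible algebraic subvariety of a variety over $\QQ$ with finite $\Aut(\CC)$-orbit is defined over $\QQq$, and a variety of finite type over $\QQq$ is defined over a number field, whence $\iota$, and then its normalisation $j$, are defined over number fields. (Alternatively --- and this is the "small part" for which Belyi's theorem is advertised --- the abstract curve $\Hp/R\Gamma(O)R^{-1}$ maps finitely to $M_{1,1}$, so composing with the $j$-line and a Möbius transformation taking $\{0,1728,\infty\}$ to $\{0,1,\infty\}$ exhibits it as the source of a Belyi morphism, hence it is defined over $\QQq$ by Theorem \ref{satz:belyi}.)
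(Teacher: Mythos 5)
Your route is genuinely different from the paper's: the paper does not reprove this statement but cites Möller's proof, whose engine is Wewers' theorem that the Hurwitz stack $\mathcal{H}_E$ of coverings of elliptic curves ramified over one point is smooth and defined over $\QQ$; one identifies an orbifold version of $C(O)$ with a geometrically connected component of $\mathcal{H}_E$, lets $\absGal$ permute these components compatibly with conjugation of the parametrised coverings, and gets definability over a number field from the finiteness of the set of components. Your proposal tries to replace this by a fibrewise argument in $M_{g,[n]}$, and it concentrates all the content in the ``key claim''. That is exactly where it breaks: the only justification offered is that the outer $\absGal$-action on $\widehat{F_2}=\pi_1^{\mathrm{et}}(E^\ast_{\QQq})$ and the outer $\SL_2(\ZZ)$-action coming from $\Out^+(F_2)$ ``commute, both being outer and of independent origin''. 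Two outer actions do not commute for such a reason, and here there is a concrete obstruction: unlike the $S_3$-action on dessins, the $\SL_2(\ZZ)$-action on $\pi_1(E^\ast)$ is not induced by automorphisms of $E^\ast$ defined over $\QQ$ (only $\pm I$ is), so there is no descent argument making the two actions commute in $\Out(\widehat{F_2})$. If they did commute elementwise, you would get $(A\cdot O)^\sigma\cong A\cdot O^\sigma$ for every $A\in\SL_2(\ZZ)$, hence that the Veech group itself (not merely its index, cf.\ Theorem \ref{satz:ori-galois-invariants}) is a Galois invariant --- a statement far stronger than anything known and certainly not available without proof. This compatibility between the mapping-class-group structure and the arithmetic structure is precisely the nontrivial input that Wewers' rationality of the Hurwitz stack supplies; asserting it is not proving it.

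There is a second, related gap. Your pointwise argument quantifies over all $B\in\SL_2(\RR)$, but for generic $B$ the curve $E_B=\CC/(B\cdot\ZZ^2)$ has transcendental $j$-invariant, so the covering $p_B$ is not an object of the \'etale picture over $\QQq$ at all, and your identification of Galois conjugation with an action on open subgroups of $\widehat{F_2}$ says nothing about $\sigma\cdot x_B$ for such $B$. To repair this one must either restrict to a Zariski-dense set of points where everything is algebraic and then still know that $(p_B)^\sigma$ lands in the affine equivalence class of $O^\sigma$ --- which is again the component statement for the Hurwitz space --- or know beforehand that the whole family over the Teichm\"uller disc is algebraic, defined over $\QQq$, with a Galois-compatible moduli interpretation; but that is essentially part a) together with the Hurwitz interpretation, i.e.\ the thing to be proven. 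Since your proof of part a) uses the equivariance from part b), it inherits the gap (the Belyi-type alternative you mention only shows the abstract curve $\Hp/R\Gamma(O)R^{-1}$ is definable over $\QQq$, not that $j$ and the embedding $\iota$ are defined over number fields). As it stands, the proposal is not a proof; the missing ingredient is exactly the $\QQ$-rationality of the Hurwitz stack used by M\"oller.
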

This result is proven by Möller in \cite[Proposition 3.2]{mm1}. The main ingredient to his proof is the fact that $\mathcal{H_E}$, the \textit{Hurwitz stack} of coverings of elliptic curves ramified over one prescribed point of some prescribed genus and degree, is a smooth stack defined over $\QQ$. This is a result due to Wewers that can be found in \cite{we}. One identifies (an orbifold version of) $C(O)$ as a geometrically connected component of $\mathcal{H_E}$. The $\absGal$-orbit of $C(O)$ consists precisely of the geometrically connected components of the $\QQ$-connected component containing $C(O)$. The definability of $C(O)$ over a finite extension of $\QQ$ then follows from the fact that $\mathcal{H_E}$ has only finitely many geometrically connected components. Showing this amounts to showing that the number or origamis of given degree is finite.

For a more detailed account of the proof of part b) than in \cite{mm1}, see \cite[Section 3.6]{diss}.

The desired Galois action on origami curves is a direct consequence of Proposition \ref{prop:ori-curves-arithmetic}:

\begin{kor}\label{cor:ori-curve-action}
Let $\mathfrak{O}$ be a set containing one origami of each isomorphism type. Then there is a natural right action of $\absGal$ on the set 
\[C(\mathfrak{O})\coloneqq \{C(O)\mid O\in\mathfrak{O}\},\]
where $\sigma\in \absGal$ sends $C(O)$ to $C(O^\sigma)$.
\end{kor}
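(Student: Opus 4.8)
The plan is to simply read the action off Proposition~\ref{prop:ori-curves-arithmetic}: for $\sigma\in\absGal$ we send $C(O)$ to $C(O^\sigma)$, where $O^\sigma$ is the Galois conjugate origami obtained by viewing $O$, via Proposition~\ref{prop:ori-equiv}(d), as a morphism to the fixed curve $E=\CC/\ZZ^2$ (which is defined over $\QQ$, as $j(E)=1728\in\QQ$) and applying the conjugation of morphisms introduced in Section~\ref{sec:dessins}. Three points then have to be checked: that $C(O^\sigma)$ again lies in $C(\mathfrak{O})$ and depends only on the curve $C(O)$ rather than on the origami $O$ chosen to represent it; that $\sigma\mapsto(\,\cdot\,)^\sigma$ is a right action; and that it factors through $\absGal$ rather than merely through $\Aut(\CC)$.

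For well-definedness, first recall that $C(\,\cdot\,)$ depends only on the affine-equivalence class of an origami (Proposition~\ref{prop:aff-eq-origamis}), hence a fortiori only on its isomorphism class; so replacing $O^\sigma$ by the representative of its isomorphism class lying in $\mathfrak{O}$ does not change $C(O^\sigma)$, and in particular $C(O^\sigma)\in C(\mathfrak{O})$. The substantive point is independence of the representative: if $O_1,O_2\in\mathfrak{O}$ satisfy $C(O_1)=C(O_2)$, one must show $C(O_1^\sigma)=C(O_2^\sigma)$. This is exactly where Proposition~\ref{prop:ori-curves-arithmetic}(b) is used: identifying each Teichmüller curve with its image under $\iota$ in $M_{g,[n]}$, part (b) says $C(O_i^\sigma)=(C(O_i))^\sigma$ as subvarieties of $M_{g,[n]}$; since the subvarieties $C(O_1)$ and $C(O_2)$ coincide, so do their $\sigma$-conjugates, and hence $C(O_1^\sigma)=C(O_2^\sigma)$. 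This is the one step that does real work; the rest is bookkeeping.

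It remains to verify the right-action axioms and the factorisation through $\absGal$. By Proposition~\ref{prop:ori-curves-arithmetic}(a), every $C(O)$ together with its embedding is defined over a number field, so it is a $\QQq$-subvariety of the $\QQ$-scheme $M_{g,[n]}$; the standard right action of $\Aut(\CC)$ on such subvarieties (the morphism-level analogue of Definition and Remark~\ref{defbem:aut-K-action}(b)) is trivial for $\sigma=\id$, satisfies $(V^\sigma)^\tau=V^{\sigma\tau}$, and factors through the surjective restriction $\Aut(\CC)\twoheadrightarrow\Aut(\QQq)=\absGal$ precisely because each $C(O)$ is already defined over $\QQq$. By Proposition~\ref{prop:ori-curves-arithmetic}(b) this standard action coincides with $C(O)\mapsto C(O^\sigma)$, which therefore inherits all the desired properties; alternatively, the axioms can be obtained directly from the fact that $O\mapsto O^\sigma$ is a right action on isomorphism classes of origamis (again Definition and Remark~\ref{defbem:aut-K-action}(b), transported along Proposition~\ref{prop:ori-equiv}(d)).

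I expect no genuine obstacle here: the statement is a formal consequence of Proposition~\ref{prop:ori-curves-arithmetic}, and the only place where something must actually be said rather than merely unwound from definitions is the compatibility check that affinely equivalent origamis have matching conjugate Teichmüller curves — which is handled verbatim by part (b) of that proposition.
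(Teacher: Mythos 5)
Your proposal is correct and follows exactly the route the paper intends: the corollary is stated there as a direct consequence of Proposition \ref{prop:ori-curves-arithmetic}, with part (b) giving $C(O^\sigma)=(C(O))^\sigma$ as embedded curves (hence well-definedness on $C(\mathfrak{O})$) and part (a) giving definability over number fields, so the action factors through $\absGal$. Your write-up merely makes explicit the bookkeeping the paper leaves implicit; there is no gap.
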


\subsection{Galois invariants and moduli fields}

Let us now return to the notion of moduli fields, which we defined in \ref{ss:modulifields}. We begin by defining the moduli field of an origami, and of an origami curve:

\begin{defi}\label{def:ori-moduli}
Let $O=(p\colon X^*\to E^*)$ be an origami, and $C(O)$ its origami curve.
\begin{aenum}
\item Consider the following subgroup of $\Aut(\CC)$:
\[U(O)\coloneqq \left\{\sigma\in\Aut(\CC)\mid\exists\CC\text{-isomorphism }\varphi\colon X^\sigma\to X: p^\sigma=p\circ\varphi\right\}\]
Then, $M(O)\coloneqq \CC^{U(O)}$ is called the \textit{moduli field} of $O$.
\item Remember that $M_{g,[n]}$ is defined over $\QQ$ and define
\[U(C(O))\coloneqq \left\{\sigma\in\Aut(\CC)\mid\ C(O)=(C(O))^\sigma\right\},\]
where as usual we consider $C(O)$ and $(C(O))^\sigma$ as subsets of $M_{g,[n]}$. Then, we call $ M(C(O))\coloneqq \CC^{U(C(O))}$ the \textit{moduli field} of the origami curve $C(O)$.
\end{aenum}
\end{defi}

Let us first note some easy to prove properties of these moduli fields:

\begin{bem}\label{bem:ori-moduli} Let, again, $O=(p\colon X^*\to E^*)$ be an origami, and $C(O)$ its origami curve. Then we have:
\begin{aenum}
\item $M(C(O))\subseteq M(O)$.
\item $[M(O):\QQ]=|O\cdot\Aut(\CC)|<\infty$ and $[M(C(O)):\QQ]=|C(O)\cdot\Aut(\CC)|<\infty$.
\end{aenum}
\end{bem}

\begin{proof}
Part a) is a consequence of Corollary \ref{cor:ori-curve-action}: If $\sigma\in\Aut(\CC)$ fixes $O$, it particularly fixes $C(O)$.

For part b) we begin by noting that we have $|O\cdot\Aut(\CC)|<\infty$ because the degree of $O$ is an invariant under the action of $\Aut(\CC)$, and, as we have seen before, we can bound the number of origamis of degree $d$ by $(d!)^2$. Furthermore, we have $[\Aut(\CC):U(O)]=|O\cdot\Aut(\CC)|$, as $U(O)$ is the stabiliser of $O$ under the action of $\Aut(\CC)$. From \cite[Lemma 1.6]{koe} follows the equality $[M(O):\QQ]=[\Aut(\CC):U(O)]$, given that we can show that $U(O)$ is a closed subgroup of $\Aut(\CC)$. Remember that a subgroup $G\leq\Aut(\CC)$ is closed iff there is a subfield $F\subseteq \CC$ with $G=\Aut(\CC/F)$. Lemma 1.5 in the same article tells us that $U(O)$ is closed if there is a finite extension $D/M(O)$ such that $\Aut(\CC/D)\leq U(O)$. Let us now give a reason for the existence of such an extension $D$: As $E$ and the branch locus $\{\infty\}$ are defined over $\QQ$, it follows from \cite[Theorem 4.1]{gd} that $p\colon X\to E$ can be defined over a number field. Choose such a field of definition $D$, which is hence a finite extension of $M(O)$. Obviously, any element $\sigma\in\Aut(\CC)$ that fixes $D$ lies in $U(O)$, so we can apply Köck's Lemma 1.5 and finally deduce the first half of b).

Now we restate these arguments for the second equality: We use a) to deduce $[M(C(O)):\QQ]<\infty$. Furthermore, from Proposition \ref{prop:ori-curves-arithmetic} a) follows that the embedded origami curve $C(O)$ can be defined over a number field, so we can use the same chain of arguments as above.
\end{proof}

It is interesting to know which properties of origamis are Galois invariants. Let us list some fairly obvious ones:
\begin{satz}\label{satz:ori-galois-invariants}
The following properties of an origami $O=(p\colon X^*\to E^*)$ are Galois invariants:
\begin{aenum}
\item The index of the Veech group $[\SL_2(\ZZ):\Gamma(O)]$.
\item The index of the projective Veech group $[\PSL_2(\ZZ):\text{P}\Gamma(O)]$.
\item The property whether or not $-I\in\Gamma(O)$.
\item The isomorphism type of the group of translations, $\text{Trans}(O)$.
\end{aenum}
\end{satz}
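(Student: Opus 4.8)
The plan is to treat the four items in the order (a), (c), (b), (d), since (b) drops out formally once (a) and (c) are known. For (a) I would combine the dictionary between Veech groups and affine equivalence classes with the Galois-equivariance of origami curves. By Proposition~\ref{prop:ori-veechgroup}~c) the number of isomorphism classes of origamis affinely equivalent to $O$ is $[\SL_2(\ZZ):\Gamma(O)]$, and by Proposition~\ref{prop:aff-eq-origamis} two origamis are affinely equivalent precisely when they define the same Teichmüller curve inside $M_{g,[n]}$. Fix $\sigma\in\absGal$. The induced action on isomorphism classes of origamis makes sense because $E=\CC/\ZZ^2$ has $j$-invariant $1728$ and is hence defined over $\QQ$, so that $E^\sigma=E$; and the action on origami curves is already well defined on the level of subsets of $M_{g,[n]}$, since applying $\sigma$ to an equality $C(O_1)=C(O_2)$ of subsets and invoking Proposition~\ref{prop:ori-curves-arithmetic}~b) yields $C(O_1^\sigma)=C(O_2^\sigma)$. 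Hence $[O']\mapsto[(O')^\sigma]$ restricts to a bijection between the finite set of isomorphism classes affinely equivalent to $O$ and the one affinely equivalent to $O^\sigma$, and comparing cardinalities via Proposition~\ref{prop:ori-veechgroup}~c) gives $[\SL_2(\ZZ):\Gamma(O)]=[\SL_2(\ZZ):\Gamma(O^\sigma)]$.

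For (c) I would reformulate the condition $-I\in\Gamma(O)$ algebraically. Writing $\tau\colon E\to E,\ z\mapsto-z$, for the elliptic involution, I claim that $-I\in\Gamma(O)$ if and only if $\tau$ lifts along $p$, i.e.\ there is $\tilde\tau\in\Aut(X)$ with $p\circ\tilde\tau=\tau\circ p$. For the forward implication, an affine automorphism of $X^\ast_{p^\ast\mu_0}$ with derivative $-I$ descends, by Schmithüsen's descent recalled in the discussion after Proposition~\ref{prop:ori-veechgroup}, to an affine automorphism of $E^\ast$ with derivative $-I$; extending it over the puncture $\infty$ forces it to fix $\infty$, hence to equal $\tau$, so the original automorphism (extended over the punctures of $X^\ast$) is the desired lift. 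For the converse, a lift $\tilde\tau$ permutes $p^{-1}(\infty)$ and thus restricts to a self-homeomorphism of $X^\ast$ which, read in the charts of $p^\ast\mu_0$, is affine with the same derivative that $\tau$ has in the flat charts of $\mu_0$, namely $-I$; being biholomorphic it preserves orientation, so $-I\in\Gamma(O)$. Since $E$ and $\tau$ are defined over $\QQ$, applying any $\sigma\in\absGal$ to $p\circ\tilde\tau=\tau\circ p$ yields $p^\sigma\circ\tilde\tau^\sigma=\tau\circ p^\sigma$ with $\tilde\tau^\sigma\in\Aut(X^\sigma)$, so $\tau$ lifts for $O^\sigma$ exactly when it lifts for $O$; this proves (c). Part (b) is then immediate, because $[\PSL_2(\ZZ):\text{P}\Gamma(O)]$ equals $[\SL_2(\ZZ):\Gamma(O)]$ if $-I\in\Gamma(O)$ and $[\SL_2(\ZZ):\Gamma(O)]/2$ otherwise, hence is a Galois invariant by (a) and (c).

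For (d) I would identify $\text{Trans}(O)$ with a group of curve automorphisms. Using Schmithüsen's descent once more, an affine automorphism of $X^\ast_{p^\ast\mu_0}$ with derivative $I$ descends to a translation of $E^\ast$ fixing $\infty$, i.e.\ to the identity, so the translations of $X^\ast_{p^\ast\mu_0}$ are exactly the deck transformations of $p\colon X^\ast\to E^\ast$, and conversely every deck transformation is a translation by the same chart computation. Extending over the punctures, $\text{Trans}(O)\cong\Aut(X/E)\coloneqq\{d\in\Aut(X)\mid p\circ d=p\}$, a finite group of automorphisms of the curve $X$ over $E$; and for $\sigma\in\absGal$ the map $d\mapsto d^\sigma$ is a group isomorphism $\Aut(X/E)\to\Aut(X^\sigma/E)=\text{Trans}(O^\sigma)$ (using $E^\sigma=E$ and $(p\circ d)^\sigma=p^\sigma\circ d^\sigma$), so $\text{Trans}(O)\cong\text{Trans}(O^\sigma)$, in fact canonically. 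In all four parts the Galois-theoretic step is thus essentially formal; what needs real care is setting up these reformulations — the well-definedness of the action on $C(\mathfrak{O})$ at the level of subsets of moduli space in (a), and, in (c) and (d), keeping careful track of the puncture $\infty$ when passing between affine automorphisms of the translation surface, deck transformations of the punctured covering $X^\ast\to E^\ast$, and automorphisms of the compact curve $X$ over $E$.
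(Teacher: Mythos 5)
The paper never writes out a proof of Theorem \ref{satz:ori-galois-invariants}: it only refers to \cite[Section 3.7]{diss}, so there is no in-text argument to measure yours against; judged on its own, your proof is correct and has the virtue of using only facts quoted in this paper. For (a), combining Proposition \ref{prop:ori-veechgroup} c) (the number of isomorphism classes affinely equivalent to $O$ is $[\SL_2(\ZZ):\Gamma(O)]$) with Proposition \ref{prop:aff-eq-origamis} and Proposition \ref{prop:ori-curves-arithmetic} b) indeed makes $[O']\mapsto[(O')^\sigma]$ a bijection between the affine equivalence classes of $O$ and of $O^\sigma$ (surjectivity via $\sigma^{-1}$), and both sets are finite, so the index comparison is legitimate; note also that $O$ and $O^\sigma$ have the same type $(g,[n])$, so the curves live in the same moduli space, as Proposition \ref{prop:ori-curves-arithmetic} b) presupposes. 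Parts (c) and (d) correctly translate the flat conditions into algebraic ones via Schmithüsen's descent statement recalled after Proposition \ref{prop:ori-veechgroup}: an affine automorphism with derivative $-I$ (resp.\ $I$) descends to an affine automorphism of $E^\ast$ with the same derivative fixing the puncture, hence to the elliptic involution $\tau$ (resp.\ the identity), giving $-I\in\Gamma(O)$ iff $\tau$ lifts to $\Aut(X)$, and $\text{Trans}(O)\cong\Aut(X/E)$; both conditions are visibly $\sigma$-equivariant because $E$, $\infty$ and $\tau$ are defined over $\QQ$, and $d\mapsto d^\sigma$ is a group isomorphism since the underlying scheme morphisms are unchanged. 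The deduction of (b) from (a) and (c) by the index formula (halving exactly when $-I\notin\Gamma(O)$) is also correct. The only steps that deserve an explicit sentence in a written-up version are the ones you already flag: that the derivative is preserved under descent along the translation covering $p$, and that translations, deck transformations and the lift of $\tau$ extend over the punctures (removable singularities), which is what lets you pass to the compact curves on which the Galois action is defined.
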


This has surely been noticed before, but in lack of a better reference we refer to \cite[Section 3.7]{diss} for a proof.

As an application, we can bound the degree of the field extension $M(C(O))/M(O)$ from above in a way that looks surprising at first:

\begin{satz}\label{satz:ori-moduli-degree}
Let $O=(p\colon X^*\to E^*)$ be an origami. Then we have
\[[M(O):M(C(O))]\leq[\SL_2(\ZZ):\Gamma(O)].\]
\end{satz}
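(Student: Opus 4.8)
The plan is to rewrite $[M(O):M(C(O))]$ as an index of subgroups of $\Aut(\CC)$ and then to bound that index by the number of origamis in a fixed affine equivalence class. Write $U \coloneqq U(O)$ and $V \coloneqq U(C(O))$, so that $M(O) = \CC^{U}$, $M(C(O)) = \CC^{V}$, and $U \subseteq V$ by Remark \ref{bem:ori-moduli} a). By Remark \ref{bem:ori-moduli} b) (orbit–stabiliser) we have $[M(O):\QQ] = [\Aut(\CC):U] < \infty$ and $[M(C(O)):\QQ] = [\Aut(\CC):V] < \infty$, so multiplicativity of field degrees along $\QQ \subseteq M(C(O)) \subseteq M(O)$ and of group indices along $U \subseteq V \subseteq \Aut(\CC)$ yields
\[ [M(O):M(C(O))] \;=\; \frac{[\Aut(\CC):U]}{[\Aut(\CC):V]} \;=\; [V:U]. \]
It therefore suffices to show $[V:U] \leq [\SL_2(\ZZ):\Gamma(O)]$.

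For this I would study the map
\[ F\colon V \longrightarrow \{\text{isomorphism classes of origamis affinely equivalent to }O\},\qquad \sigma \longmapsto [O^{\sigma}]. \]
Well-definedness is the crux: for $\sigma \in V$ Definition \ref{def:ori-moduli} b) gives $(C(O))^{\sigma} = C(O)$ as subvarieties of $M_{g,[n]}$, while Proposition \ref{prop:ori-curves-arithmetic} b) identifies the left-hand side with $C(O^{\sigma})$ (using that $M_{g,[n]}$ is defined over $\QQ$, so that $\iota$ and $\iota^{\sigma}$ agree after the identification $(M_{g,[n]})^{\sigma} = M_{g,[n]}$); hence $C(O^{\sigma}) = C(O)$, and Proposition \ref{prop:aff-eq-origamis} shows $O^{\sigma}$ is affinely equivalent to $O$. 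By Proposition \ref{prop:ori-veechgroup} c) the target of $F$ is finite of cardinality $[\SL_2(\ZZ):\Gamma(O)]$.

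Finally I would compute the fibres of $F$. The $\Aut(\CC)$-action of Definition and Remark \ref{defbem:aut-K-action} descends to isomorphism classes of origamis — a $\CC$-isomorphism of origamis is still one after composing the structure morphisms with $\Spec(\sigma)$ — so that $O^{\sigma} \cong O^{\tau}$ if and only if $O^{\sigma\tau^{-1}} \cong O$, i.e.\ if and only if $\sigma\tau^{-1} \in U$, i.e.\ if and only if $\sigma$ and $\tau$ lie in the same coset of $U$ in $V$. Hence $F$ factors through an injection $U\backslash V \hookrightarrow \{\text{isomorphism classes affinely equivalent to }O\}$, which gives $[V:U] = |U\backslash V| \leq [\SL_2(\ZZ):\Gamma(O)]$ and proves the statement.

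The only step demanding genuine care is the identification inside the well-definedness of $F$: reconciling the two a priori distinct meanings of the Galois conjugate of a Teichmüller curve — the image of $C(O)$ under $\sigma$ in the $\QQ$-scheme $M_{g,[n]}$ versus the Teichmüller curve $C(O^{\sigma})$ of the conjugate origami. That is exactly Proposition \ref{prop:ori-curves-arithmetic} b), and everything downstream of it is the elementary coset count above.
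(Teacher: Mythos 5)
Your proof is correct, and every step you invoke is available in the paper at the paper's own level of rigour (in particular, the slide between $\Aut(\CC)$ and $\absGal$ when applying Proposition \ref{prop:ori-curves-arithmetic} b) is one the paper itself makes). The route, however, is the dual of the paper's: the paper counts globally, setting $k=|O\cdot\Aut(\CC)|=[M(O):\QQ]$ and $l=|C(O)\cdot\Aut(\CC)|=[M(C(O)):\QQ]$, using Corollary \ref{cor:ori-curve-action} to see that the curves of the conjugate origamis exhaust the orbit of $C(O)$, and bounding the number of conjugates lying over any fixed curve by $m=[\SL_2(\ZZ):\Gamma(O)]$ — a step for which it needs Theorem \ref{satz:ori-galois-invariants} to know that all origamis in the orbit have the same Veech group index — so that $k\le lm$ and $[M(O):M(C(O))]=k/l\le m$. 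You instead argue at the level of stabilisers: you identify $[M(O):M(C(O))]$ with the index $[U(C(O)):U(O)]$ and inject the coset space $U(O)\backslash U(C(O))$ into the single affine equivalence class of $O$, since any $\sigma\in U(C(O))$ forces $C(O^\sigma)=C(O)$ and hence, by Proposition \ref{prop:aff-eq-origamis}, affine equivalence of $O^\sigma$ with $O$; the bound then comes from Proposition \ref{prop:ori-veechgroup} c) applied to $O$ alone, so the Galois invariance of the Veech group index is never needed. Both arguments rest on the same essential inputs (Remark \ref{bem:ori-moduli} b), Proposition \ref{prop:ori-curves-arithmetic} b) combined with Proposition \ref{prop:aff-eq-origamis}, and Proposition \ref{prop:ori-veechgroup} c)); yours is slightly leaner on cited invariants, while the paper's version makes the global orbit picture explicit.
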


\begin{proof}
Let $O\cdot\Aut(\CC)=\{O_1,\,\ldots,\,O_k\}$ and $C(O)\cdot\Aut(\CC)=\{C_1,\,\ldots,\,C_l\}$. Then we have, as we have shown in Remark \ref{bem:ori-moduli} b):
\[k=[M(O):\QQ],\:\:l=[M(C(O)):\QQ].\]
By Theorem \ref{satz:ori-galois-invariants} the Veech groups of all $O_i$ have the same index $m\coloneqq [\SL_2(\ZZ):\Gamma(O)]$, and by Corollary \ref{cor:ori-curve-action} we also have 
\[\{C_1,\,\ldots,\,C_l\}=\{C(O_1),\,\ldots,\,C(O_k)\}.\]
From Proposition \ref{prop:ori-veechgroup} c) we know that each curve $C_j$ can be the origami curve of at most $m$ of the $O_i$'s, so we have
\[l\geq \frac{k}{m},\]
or equivalently $\tfrac{k}{l}\leq m$. The left hand side of the latter inequality is, by the multiplicity of degrees of field extensions, equal to $[M(O):M(C(O))]$, and the right hand side is by definition the index of the Veech group of $O$.
\end{proof}

\section{The Galois action on M-Origamis}\label{sec:m-ori}
The goal of this section is to recreate M.\ Möller's construction of origamis from dessins, which he used to prove the faithfulness of the Galois action on origami curves in \cite{mm1}, in a more topological way. That is, we will calculate the monodromy of these origamis (called M-Origamis here) as well as their cylinder decompositions and Veech groups, and we will reprove said faithfulness result in a way explicit enough to give examples of non-trivial Galois orbits.

\subsection{Pillow case origamis and M-Origamis}
Remember the elliptic curve of our choice $E=\CC/\ZZ^2$ and that it is defined over $\QQ$. It carries a group structure; denote its neutral by $\infty\coloneqq 0+\ZZ^2$. Further denote by $[2]\colon E\to E$ the multiplication by $2$, and by $h\colon E\to\PeC$ the quotient map under the induced elliptic involution $z+\ZZ^2\mapsto -z+\ZZ^2$. The map $h$ shall be chosen such that $h(\infty)=\infty$, and that the other critical values are $0,\,1$ and some $\lambda\in \CC$. By abuse of notation, call their preimages also $0,\,1$ and $\lambda$. By even more abuse of notation, call $\{0,\,1\,\lambda,\infty\}\subset E$ the set of Weierstraß points and recall that it is the kernel of $[2]$. Note that $[2]$ is an unramified covering of degree $4$, and $h$ is a degree $2$ covering ramified over $\{0,\,1,\,\lambda,\infty\}$.

Now let $\gamma\colon Y\to\PeC$ be a connected pillow case covering. That is, let $Y$ be a non-singular connected projective curve over $\CC$, and $\gamma$ a non-constant morphism with $\crit\gamma\subseteq\{0,1,\lambda,\infty\}$. Let $\tilde{\pi}\colon \tilde{X}\coloneqq E\times_{\PeC}Y\to E$ be its pullback by $h$, and $\delta\colon X\to\tilde{X}$ be the desingularisation of $\tilde{X}$ (as $\tilde{X}$ will have singularities in general---we will discuss them later in this section). Finally, consider the map let $[2]\circ \pi\colon X\to E$. In order not to get lost in all these morphisms, we draw a commutative diagram of the situation: 

\begin{center}
\begin{tikzpicture}[description/.style={fill=white,inner sep=2pt}]
\matrix (m) [matrix of math nodes, row sep=1.5em,
column sep=1.25em, text height=1.5ex, text depth=0.25ex]
{ X & \tilde{X} && Y\\
&&\square\\
& E && \PeC\\
\\
&E\\ };
\path[->,font=\scriptsize]
(m-1-1) edge node[auto] {$ \delta $} (m-1-2)
        edge node[left] {$ \pi $} (m-3-2)
(m-1-2) edge node[auto] {$ $} (m-1-4)
        edge node[auto] {$\tilde{\pi}$} (m-3-2)
(m-1-4) edge node[auto] {$\gamma$} (m-3-4)
(m-3-2) edge node[auto] {$h$} (m-3-4)
        edge node[auto] {$[2]$} (m-5-2);

\end{tikzpicture}
\end{center}
Because $\gamma$ is a pillow case covering, $\tilde{\pi}$ is branched (at most) over the Weierstraß points, and so is $\pi$. So, $[2]\circ \pi$ is branched over at most one points, which means that it defines an origami if $X$ is connected (which we will show in Remark \ref{rem:m-ori-connected}). 

\begin{defi}\label{defi:m-ori}
\begin{aenum}
\item In the situation above, we call $O(\gamma)\coloneqq ([2]\circ\pi:X\to E)$ the \textit{pillow case origami} associated to the pillow case covering $\gamma\colon Y\to\PeC$.
\item If furthermore $\beta\coloneqq \gamma$ is unramified over $\lambda$, i.e.\ it is a Belyi morphism, then we call $O(\beta)$ the \textit{M-Origami} associated to $\gamma$.
\end{aenum}
\end{defi}

\subsection{The topological viewpoint}
We would like to replace the algebro-geometric fibre product by a topological one, in order to apply the results derived in the first section. In fact, after taking out all critical and ramification points, it will turn out that $X^\ast \cong E^\ast \times_{\PeC^\ast}Y^\ast$ as coverings. To show this, we have to control the singular locus of $\tilde X$ in the above diagram. This is a rather elementary calculation in algebraic geometry done in the follwing

\begin{prop}\label{prop:singularities}
Let $C, D$ be non-singular projective curves over $k$, where $k$ is an algebraically closed field, and $\Phi\colon  C\to \Pek,\,\Psi\colon  D\to \Pek$ non-constant rational morphisms (i.e.\ ramified coverings). Then we have for the singular locus of $C\times_{\Pek} D$:
\[\text{Sing}(C\times_{\Pek} D)=\{(P,Q)\in C\times_{\Pek} D\mid \Phi \text{ is ramified at } P\wedge\Psi \text{ is ramified at } Q\}\]
\end{prop}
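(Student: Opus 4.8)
Regularity of local rings is a local property, so the plan is to reduce the statement to a purely formal computation at each closed point $(P,Q)$ of $C\times_{\Pek}D$ lying over $R\coloneqq\Phi(P)=\Psi(Q)\in\Pek$. First I would note that $\Phi$ and $\Psi$, being non-constant morphisms of smooth projective curves, are finite, so $C\times_{\Pek}D$ is a projective curve, finite over $D$ via the projection $\pi_D$, which is the base change of $\Phi$; away from the finitely many points of $D$ lying over the branch values of $\Phi$, $\pi_D$ is étale and hence $C\times_{\Pek}D$ is smooth there. So only finitely many closed points are in question (and, in the characteristic-$0$ setting of the paper, no non-reducedness can intervene). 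On an affine chart the local ring at $(P,Q)$ is a localisation of $\mathcal{O}_{C,P}\otimes_{\mathcal{O}_{\Pek,R}}\mathcal{O}_{D,Q}$; after $\mathfrak{m}$-adic completion it becomes the completed tensor product $\widehat{\mathcal{O}}_{C,P}\mathbin{\widehat{\otimes}}_{\widehat{\mathcal{O}}_{\Pek,R}}\widehat{\mathcal{O}}_{D,Q}$, and since a Noetherian local ring is regular iff its completion is, we may argue formally.

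Next I would make this explicit. As $C$, $D$ and $\Pek$ are smooth curves over the algebraically closed field $k$, one has $\widehat{\mathcal{O}}_{C,P}\cong k[[u]]$, $\widehat{\mathcal{O}}_{D,Q}\cong k[[v]]$ and $\widehat{\mathcal{O}}_{\Pek,R}\cong k[[t]]$, with the structure maps sending $t\mapsto u^{e}w_1(u)$ and $t\mapsto v^{f}w_2(v)$, where $e$, $f$ are the ramification indices of $\Phi$ at $P$ and of $\Psi$ at $Q$ and $w_1,w_2$ are units (i.e.\ $w_1(0),w_2(0)\neq 0$). Hence $\widehat{\mathcal{O}}_{C\times_{\Pek}D,(P,Q)}\cong k[[u,v]]/(F)$ with $F\coloneqq u^{e}w_1(u)-v^{f}w_2(v)$, which is a nonzero non-unit, so the quotient is a one-dimensional local ring and is regular precisely when $F\notin(u,v)^2$, i.e.\ when $F$ has a nonzero linear term. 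Inspecting $F$: its $u$-part in degree $1$ is $w_1(0)u$ when $e=1$ and $0$ when $e\geq 2$, and symmetrically for $v$ and $f$. (In the tame case one could instead absorb the units by extracting $e$-th and $f$-th roots of $w_1,w_2$ and recognise the plane curve singularity $k[[x,y]]/(x^{e}-y^{f})$, which is familiar.) Thus $(P,Q)$ is singular iff $e\geq 2$ and $f\geq 2$, i.e.\ iff $\Phi$ is ramified at $P$ and $\Psi$ at $Q$, which is exactly the asserted description of $\text{Sing}(C\times_{\Pek}D)$.

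The step I expect to be the main obstacle is the justification that the completed local ring of the fibre product is the completed tensor product of the completed local rings of the two factors — that is, that $\mathfrak{m}$-adic completion commutes with this local base change. This is standard for schemes of finite type over a field, but it deserves a careful reference rather than a wave of the hand; everything after it is the elementary formal manipulation above. A secondary, lower-stakes issue is excluding non-reducedness or embedded components of $C\times_{\Pek}D$ that could in principle enlarge the singular locus in positive characteristic — a phenomenon that does not occur in the characteristic-$0$ situation relevant here.
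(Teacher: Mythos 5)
Your argument is correct, but it follows a genuinely different route from the paper's. The paper works entirely with the Jacobi criterion: it passes to affine charts, writes the fibre product inside $\Af^{m+n}$ as cut out by the defining equations of the two curves together with $\varphi(x)-\psi(y)$, translates ramification at $P$ into $\varphi-\varphi(P)\in M_P^2+I'$, and then checks by hand that this condition holds at both $P$ and $Q$ exactly when the last row of the Jacobi matrix is a linear combination of the others, i.e.\ exactly when the rank drops. Your proof instead computes the formal local ring: $\widehat{\mathcal{O}}_{C\times_{\Pek}D,(P,Q)}\cong k[[u,v]]/\bigl(u^{e}w_1(u)-v^{f}w_2(v)\bigr)$, and reads off regularity from the presence of a linear term. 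The step you flag — that completion commutes with the construction, so that the completed local ring of the fibre product is the completed tensor product — is indeed the only nonelementary input, and it is standard: locally the fibre product is $(A\otimes_k B)/(\varphi\otimes 1-1\otimes\psi)$, the completion of $A\otimes_k B$ at the $k$-rational point $(P,Q)$ is $k[[u,v]]$, and completion is exact, so the quotient completes to $k[[u,v]]/(F)$; a precise reference (e.g.\ to the corresponding statements in EGA or the Stacks Project) would close this. What your route buys is more information and uniformity: it identifies the actual singularity type (formally $x^{e}=y^{f}$ in the tame case) and is insensitive to reducedness questions, so it works verbatim in any characteristic; the concern you raise about non-reducedness is in fact harmless even there, since your regularity computation never uses reducedness. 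What the paper's route buys is self-containedness: it needs nothing beyond the Jacobi criterion and explicit manipulation of maximal ideals in coordinate rings, at the price of a longer and less conceptual calculation. Both arguments establish the same statement.
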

\begin{proof}
Since the property of a point of a variety to be singular can be decided locally, we can first pass on to an affine situation and then conclude by a calculation using the Jacobi criterion.

So let $(P,Q)\in F\coloneqq C\times_{\Pek} D$, and let $U\subset\Pek$ be an affine neighbourhood of $\Phi(P)=\Psi(Q)$. Further let $P\in U'\subset C,\,Q\in U''\subset D$ be affine neighbourhoods such that $\Phi(U')\subseteq U\supseteq \Psi(U'')$. This is possible since the Zariski topology of any variety admits a basis consisting of affine subvarieties (e.g.\ see \cite[I, Prop. 4.3]{har}). Now let $p_C\colon F\to C,\, p_D\colon F\to D$ be the canonical projections. Then, by the proof of \cite[II, Thm. 3.3]{har} we have $p_C^{-1}(U')\cong U'\times_{\Pek}D$, and repeating that argument on the second factor gives
\[p_C^{-1}(U')\cap p_D^{-1}(U'')=({p_D}_{|p_C^{-1}(U')})^{-1}(U'')\cong U'\times_{\Pek}U''\cong U'\times_U U''.\]
The last isomorphism is due to the easy fact that in any category, a monomorphism $S\to T$ induces an isomorphism $A\times_S B\cong A\times_T B$, given that either of the two exists. So we are, as desired, in an affine situation, as the fibre product of affine varieties is affine.

Now, let $U=\Af^1_k$, and let
\[I'=(f_1,\ldots,\,f_k)\subseteq k[x_1,\ldots,\,x_n]\eqqcolon R_n,\:I''=(g_1,\ldots,\,g_l)\subseteq k[y_1,\ldots,\,y_m]\eqqcolon R_m\]
be ideals such that $U'=V(I')\subseteq \Af^n_k,\,U''=V(I'')\subseteq \Af^m_k$.
Furthermore let $\varphi\in R_n,\,\psi\in R_m$ be polynomials  representing the morphisms $\Phi$ and $\Psi$ on the affine parts $U'$ and $U''$. We denote their images in the affine coordinate rings by $\overline{\varphi}\in k[U'],\,\overline{\psi}\in k[U'']$. So we get
\[F\coloneqq U'\times_U U''=V\left(f_1,\ldots,f_k,g_1,\ldots, g_l, \varphi(x_1,\ldots, x_n)-\psi(y_1,\ldots, y_m)\right)\subseteq \Af^{m+n}.\]
The Jacobi matrix of $F$ is given by
\[
J_F=\begin{pmatrix}
\frac{\partial f_1}{\partial x_1}&\dots&\frac{\partial f_1}{\partial x_n}\\
\vdots && \vdots&&0\\
\frac{\partial f_k}{\partial x_1}&\dots&\frac{\partial f_k}{\partial x_n}\\
&&&\frac{\partial g_1}{\partial y_1}&\dots&\frac{\partial g_1}{\partial y_m}\\
&0&&\vdots && \vdots\\
&&&\frac{\partial g_l}{\partial y_1}&\dots&\frac{\partial g_l}{\partial y_m}\\
\frac{\partial\varphi}{\partial x_1}&\dots&\frac{\partial\varphi}{\partial x_n}&-\frac{\partial\psi}{\partial y_1}&\dots&-\frac{\partial\psi}{\partial y_m}
\end{pmatrix}
\]
and by the Jacobi criterion a point $(P,Q)\in F$ is singular iff $\text{rk}(J_F(P,Q))< m+n-1$.

For $P=(p_1,\ldots,\,p_n)\in \Af^n_k,\,Q=(q_1,\ldots,\,q_m)\in\Af^m_k$, denote by
\begin{align*}
M_P&\coloneqq \bigl((x_1-p_1),\ldots,(x_n-p_n)\bigr)\subseteq R_n\text{ and}\\
M_Q&\coloneqq \bigl((y_1-q_1),\ldots,(y_m-q_m)\bigr)\subseteq R_m
\end{align*}
the corresponding maximal ideals, and, if $P\in U'$ and $Q\in U''$, denote by $m_P\subseteq k[U']$ and $m_Q\subseteq k[U'']$ the corresponding maximal ideals in the affine coordinate rings. Before we continue, we note that for $h\in R_n$, we have:
\begin{equation}\label{eq:partial}
h\in M_P^2\Leftrightarrow h(P)=0\wedge\forall i=1,\ldots,\,n\colon \frac{\partial h}{\partial x_i}(P)=0.
\end{equation}
Of course the corresponding statement is true for $M_Q^2\subseteq R_m$.

Now let $P=(p_1,\ldots,p_n)\in U',\,Q=(q_1,\ldots,q_m)\in U''$ be ramification points of $\overline{\varphi}$ and  $\overline{\psi}$, respectively. This is by definition equivalent to
\[\overline{\varphi}-\overline{\varphi}(P)\in m_P^2\:\wedge\:\overline{\psi}-\overline{\psi}(Q)\in m_Q^2, \text{ or,}\]
\begin{equation}\label{eq:M^2+I}
\varphi-\varphi(P)\in M_P^2+I'\:\wedge\:\psi-\psi(Q)\in M_Q^2+I''.
\end{equation}
So, there exist $a_0\in M_P^2,\,a_1,\ldots,\,a_k\in R_n,\:b_0\in M_Q^2,\,b_1,\ldots,\,b_l\in R_m$ such that
\[\varphi-\varphi(P)=a_0+\sum_{i=1}^k a_if_i,\:\:\psi-\psi(Q)=b_0+\sum_{i=1}^l b_ig_i.\]
Writing $R_n\ni h= h(P)+(h-h(P))$, we have the decomposition $R_n=k\oplus M_P$ as $k$-modules, and analogously $R_m=k\oplus M_Q$. So write
\[a_i=\lambda_i+\tilde{a}_i\in k\oplus M_P,\,i=1,\ldots,\,k\text{ and }b_i=\mu_i+\tilde{b}_i\in k\oplus M_Q,\,i=1,\ldots,\,l.\]
Because $I'\subseteq M_P$ and $I''\subseteq M_Q$, we have $c\coloneqq \sum_{i=1}^k \tilde{a}_if_i\in M_P^2$ and $d\coloneqq \sum_{i=1}^l \tilde{b}_ig_i\in M_Q^2$. So if we set $a\coloneqq a_0+c\in M_P^2$ and $b\coloneqq b_0+d\in M_Q^2$, we get
\[\varphi-\varphi(P)=a+\sum_{i=1}^k\lambda_if_i\text{ and }\psi-\psi(Q)=b+\sum_{i=1}^l\mu_ig_i.\]
Deriving on both sides of the equations with respect to all the variables, we get, using (\ref{eq:partial}):
\begin{align*}\forall i\in\{1,\ldots,\,n\}\colon & \frac{\partial\varphi}{\partial x_i}(P)=\sum_{j=1}^k\lambda_j\frac{\partial f_j}{\partial x_i}(P) \text{ and }\\
  \forall i\in\{1,\ldots,\,m\}\colon & \frac{\partial\psi}{\partial y_i}(Q)=\sum_{j=1}^l\mu_j\frac{\partial g_j}{\partial y_i}(Q).\end{align*}

So the last row of $J_F(P,Q)$ is a linear combination of the first $m+n$ ones.
As the two big non-zero blocks of $J_F$ are simply the Jacobi matrices $J_{U'}$ and $J_{U''}$ of the non-singular curves $U'$ and $U''$, they have ranks $n-1$ and $m-1$, evaluated at $P$ and $Q$ respectively, and we get
\[\text{rk}\left(J_F(P,Q)\right)=\text{rk}(J_{U'}(P))+\text{rk}(J_{U''}(Q))=(n-1)+(m-1)<m+n-1,\]
so $(P,Q)\in\text{Sing}(U'\times_U U'')$.

Conversely, let $(P,Q)\in\text{Sing}(F)$ be a singular point. Then, by the non-singularity of $U'$ and $U''$, we have $\text{rk}(J_F(P,Q))=m+n-2$. More specifically, the last row of this matrix is a linear combination of the others:
\begin{align*}
\exists \lambda_1,\ldots, \lambda_k\in k\:\:\forall i\in\{1,\ldots,\,n\}\colon & \frac{\partial\varphi}{\partial x_i}(P)=\sum_{j=1}^k\lambda_j\frac{\partial f_j}{\partial x_i}(P) \text{ and }\\
  \exists \mu_1,\ldots, \mu_l\in k\:\:\forall i\in\{1,\ldots,\,m\}\colon & \frac{\partial\psi}{\partial y_i}(Q)=\sum_{j=1}^l\mu_j\frac{\partial g_j}{\partial y_i}(Q).
\end{align*}
Now set $\tilde{\varphi}\coloneqq \varphi-\varphi(P)-\sum\lambda_j f_j$ and $\tilde{\psi}\coloneqq \psi-\psi(Q)-\sum\mu_j g_j$. Then we have $\tilde{\varphi}\in M_P$ and $\tilde{\psi}\in M_Q$, and furthermore
\[\forall i\in\{1,\ldots,\,n\}\colon  \frac{\partial\tilde{\varphi}}{\partial x_i}(P)=0\text{ and }
\forall i\in\{1,\ldots,\,m\}\colon  \frac{\partial\tilde{\psi}}{\partial y_i}(Q)=0.\]
So we can apply (\ref{eq:partial}) again to get $\tilde{\varphi}\in M_P^2,\:\tilde{\psi}\in M_Q^2$, or,
\[\varphi-\varphi(P)\in M_P^2+I'\:\wedge\:\psi-\psi(Q)\in M_Q^2+I''.\]
This is precisely (\ref{eq:M^2+I}), which we have already shown above to be equivalent to $P$ and $Q$ being ramification points of $\overline{\varphi}$ and $\overline{\psi}$, respectively.\qed
\end{proof}

So if we denote $\Pe^*\coloneqq \PeC\setminus\{0,\,1,\,\lambda,\, \infty\}$ and by $E^*,\, Y^*,\, X^*$ its preimages under $h,\, \beta$ and $\pi\circ h$, respectively, (and, for the sake of a simpler notation, do not change the names of the restricted maps,) then we have in particular:

\begin{bem}
The following diagram is Cartesian in the category of topological spaces together with covering maps:
\begin{center}
\begin{tikzpicture}[description/.style={fill=white,inner sep=2pt}]
\matrix (m) [matrix of math nodes, row sep=1.5em,
column sep=1.25em, text height=1.5ex, text depth=0.25ex]
{ X^* & & Y^*\\
&\square\\
E^* && \Pe^*\\
};
\path[->,font=\scriptsize]
(m-1-1) edge node[auto] {$ $} (m-1-3)
        edge node[auto] {$\pi$} (m-3-1)
(m-1-3) edge node[auto] {$\beta$} (m-3-3)
(m-3-1) edge node[auto] {$h$} (m-3-3);

\end{tikzpicture}
\end{center}
\end{bem}

\subsection{The monodromy of M-Origamis}

We can now calculate the monodromy of an M-Origami $M(\beta)$, given the monodromy of a Belyi morphism $\beta$ by applying Propositions \ref{satz:fprod} and \ref{satz:comp}. Fix generators $\langle A,B\rangle=\pi_1(E\setminus\{\infty\})$ and $\langle x,y\rangle=\pi_1(\PeC\setminus\{0,1,\infty\})$ according to the figures of the following proof. We will, again, make use of the “two-coordinate” way of labeling fibres of composed coverings we introduced in Proposition \ref{satz:comp}.

\begin{satz}\label{satz:moncalc}
Let $\beta\colon Y\to\PeC$ be a Belyi morphism of degree $d$ with monodromy given by $p_x\coloneqq m_\beta(x),\,p_y\coloneqq m_\beta(y)$. Then, the monodromy of $M(\beta)=([2]\circ\pi\colon X\to E)$ is given by
\begin{gather*}
m_{[2]\circ\pi}\colon \pi_1(E\setminus\{\infty\})\to S_{4d}\\ \\
m_{[2]\circ\pi}(A)(i,j)=\begin{cases}
(2,j)& i=1\\
(1, p_y(j))& i=2\\
(4,j)& i=3\\
(3,p_y^{-1}(j))& i=4\end{cases}
,\quad
m_{[2]\circ\pi}(B)(i,j)=\begin{cases}
(3,j)& i=1\\
(4,j)& i=2\\
(1,p_x^{-1}(j))& i=3\\
(2,p_x(j))& i=4\end{cases}.
\end{gather*}
\end{satz}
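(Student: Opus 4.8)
The strategy is to unwind the M-Origami covering $[2]\circ\pi\colon X\to E$ as a composition of two coverings — the pullback $\pi\colon X^*\to E^*$ of $\beta$ by $h$, followed by the degree-$4$ covering $[2]\colon E^*\to E^*$ — and to feed the monodromies of these two pieces into Propositions \ref{satz:fprod} and \ref{satz:comp}. So the first task is to pin down three ingredients explicitly with respect to a compatible choice of loops: (i) the monodromy $m_h\colon \pi_1(\Pe^*)\to S_2$ of the double cover $h\colon E^*\to\Pe^*$, where $\Pe^*=\PeC\setminus\{0,1,\lambda,\infty\}$; (ii) the monodromy $m_{[2]}\colon \pi_1(E^*,\cdot)\to S_4$ of multiplication by $2$ on $E^*$; and (iii) the induced monodromy of $\pi\colon X^*\to E^*$, which by Proposition \ref{satz:fprod}a) is $m_\beta\circ h_*$ once we identify $\pi_1(E^*)$ as the index-$2$ subgroup $h_*\pi_1(E^*)\le\pi_1(\Pe^*)$. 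I would fix the generators $x,y$ of $\pi_1(\PeC\setminus\{0,1,\infty\})$ as small loops around $0$ and $1$ (so a loop $z$ around $\infty$ satisfies $xyz=1$ or $z=(xy)^{-1}$, matching Definition and Remark \ref{defbem:weak-action}), but now working in $\pi_1(\Pe^*)$ we must also keep a loop around $\lambda$; since $\beta$ is a Belyi morphism it is unramified over $\lambda$, so that loop acts trivially under $m_\beta$ and the pullback computation only sees $p_x$ and $p_y$.

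Next I would compute $m_h$ and $m_{[2]}$ concretely. The cover $h$ is the hyperelliptic/elliptic quotient ramified over the four Weierstrass points, so on $\pi_1(\Pe^*)$ each of the four standard loops around $0,1,\lambda,\infty$ maps to the transposition in $S_2$, and $h_*\pi_1(E^*)$ is the kernel of the sign map $\pi_1(\Pe^*)\to\ZZ/2$. For $[2]$, I would realise $E^*$'s fundamental group via $A,B$ (the standard loops shown in the figures the proof refers to), note that $[2]_*\pi_1(E^*)$ is the subgroup generated by $A^2,B^2,ABAB^{-1},\dots$ — concretely the monodromy sends $A$ and $B$ to the two commuting double transpositions generating the regular representation of $(\ZZ/2)^2$ — and fix a labelling $1,2,3,4$ of the four sheets so that $A$ swaps $1\leftrightarrow2$, $3\leftrightarrow4$ and $B$ swaps $1\leftrightarrow3$, $2\leftrightarrow4$; this is exactly the sheet-permutation pattern visible in the claimed formula. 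I would then choose coset representatives $\gamma_i$ for $\Gamma_1=[2]_*^{-1}(\Stab(1))$ in $\pi_1(E^*)$ with $\gamma_1=1$, namely $\gamma_1=1$, $\gamma_2=A$, $\gamma_3=B$, $\gamma_4=AB$ (or $BA$), chosen so that $e^{[2]}_{y_i}(\gamma_i)=y_1$ as required by Proposition \ref{satz:comp}.

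The heart of the computation is then the substitution into the formula of Proposition \ref{satz:comp}: $m_{[2]\circ\pi}(\gamma)(i,j)=\bigl(m_{[2]}(\gamma)(i),\,m_\pi(c_i(\gamma))(j)\bigr)$ with $c_i(\gamma)=(\,[2]_*^{-1}(\gamma_k\gamma\gamma_i^{-1})\,)$, $k=m_{[2]}(\gamma)(i)$. Taking $\gamma=A$ and $\gamma=B$ in turn, and using $m_\pi=m_\beta\circ h_*$ (so that $m_\pi$ on a loop in $E^*$ is determined by how that loop, pushed into $\pi_1(\Pe^*)$, expresses in terms of $x$ and $y$ and hence acts by a word in $p_x,p_y$), one reads off the eight cases. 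For instance for $i=1$, $\gamma=A$: $k=m_{[2]}(A)(1)=2$, so $c_1(A)=[2]_*^{-1}(\gamma_2 A\gamma_1^{-1})=[2]_*^{-1}(A\cdot A)=[2]_*^{-1}(A^2)$, a loop in $E^*$; I must then identify $A^2\in\pi_1(E^*)$ with its image under $h_*$ — a word in $x,y,$ loop-around-$\lambda$ — and apply $m_\beta$. The claim that this gives the identity on the $j$-coordinate (the "$(2,j)$" case) amounts to $A^2$ mapping under $h_*\circ m_\beta$ to the identity; for $i=2$ one gets $c_2(A)=[2]_*^{-1}(\gamma_1 A\gamma_2^{-1})=[2]_*^{-1}(A\cdot A^{-1})=[2]_*^{-1}(1)$... and here one has to be careful, because that would give the identity, not $p_y$; so in fact the right coset representatives must be chosen more carefully (e.g. $\gamma_2=A^{-1}$ or with a compensating correction), or equivalently the path-lift normalisation $e^{[2]}_{y_i}(\gamma_i)=y_1$ forces a specific choice, and it is precisely this bookkeeping of which word in $x,y$ the relevant $E^*$-loop becomes that produces the asymmetry $p_y$ versus $p_y^{-1}$ between $i=2$ and $i=4$, and $p_x^{-1}$ versus $p_x$ between $i=3$ and $i=4$ for the $B$ formula.

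\textbf{Main obstacle.} The genuinely delicate part is the geometric bookkeeping: choosing the loops $A,B$ on $E^*$ and $x,y$ (and the $\lambda$-loop) on $\Pe^*$ consistently, computing the images $h_*(A^{\pm2}), h_*(B^{\pm2}), h_*(ABA^{-1}B^{-1}\text{-type words})$ as explicit words in $\pi_1(\Pe^*)$, and tracking the coset-representative corrections $\gamma_k\gamma\gamma_i^{-1}$ through $[2]_*^{-1}$. This is exactly the kind of picture-dependent monodromy chase that the proof's figures are meant to resolve, and getting the orientations right is what distinguishes $p_y$ from $p_y^{-1}$; everything else — the degrees ($4d$), the block structure on the $i$-coordinate, the application of Propositions \ref{satz:fprod} and \ref{satz:comp} — is formal. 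I would organise the write-up around two labelled figures (one for $E^*$, one for $\Pe^*$), do the case $i=1,\gamma=A$ in full detail as a template, and then assert the remaining seven cases follow by the identical procedure.
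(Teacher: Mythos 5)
Your plan coincides with the paper's proof strategy: pass to the unramified loci, obtain $m_\pi=m_\beta\circ h_*$ from Proposition \ref{satz:fprod} a), compute $m_{[2]}$ with a sheet labelling realising $(\ZZ/2\ZZ)^2$, and substitute into the composition formula of Proposition \ref{satz:comp}. However, there is a genuine gap precisely where the content of the theorem lies. First, your sample case is computed with coset representatives ($\gamma_2=A$, $\gamma_3=B$, $\gamma_4=AB$) that violate the normalisation $e^{[2]}_{y_i}(\gamma_i)=y_1$ demanded by Proposition \ref{satz:comp}; you notice the resulting contradiction at $i=2$ and correctly guess that one needs $\gamma_2=A^{-1}$, but you never redo the case you actually wrote out, and as it stands it is wrong: with the normalised choice $\gamma_1=1,\ \gamma_2=A^{-1},\ \gamma_3=B^{-1},\ \gamma_4=B^{-1}A^{-1}$ the case $i=1$, $\gamma=A$ gives $c_1(A)=[2]_*^{-1}(A^{-1}A)=1$, so there is nothing to verify there, whereas $[2]_*^{-1}(A^2)$ --- which you claim must act trivially --- is exactly the loop responsible for the nontrivial entry $p_y$ at $i=2$.

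Second, and more substantially, the identification of these loops is never carried out. One needs explicit free generators of $\pi_1(E\setminus\{0,1,\lambda,\infty\})\cong F_5$ and of $\pi_1(\Pe^*)\cong F_3$, together with the explicit expression of $h_*$ on them (in the paper: $a\mapsto yw$, $b\mapsto x^{-1}w^{-1}$, $c\mapsto w^2$, $d\mapsto xw^{-1}$, $e\mapsto y^{-1}w^{-1}$, where $w$ is the loop around $\lambda$ killed by $m_\beta$), so that for instance $c_2(A)=[2]_*^{-1}(A^2)=a$ gives $m_\pi(a)=p_y$ while $c_4(A)=[2]_*^{-1}(B^{-1}A^2B)=e$ gives $p_y^{-1}$, and similarly $c_3(B)=b\mapsto p_x^{-1}$, $c_4(B)=d\mapsto p_x$, $c_2(B)=c\mapsto 1$. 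These orientation-sensitive identifications are exactly what distinguishes $p_y$ from $p_y^{-1}$ and $p_x$ from $p_x^{-1}$ in the asserted formula; you explicitly defer them to picture-dependent bookkeeping, but that bookkeeping \emph{is} the proof --- without it your argument establishes only the block structure in the first coordinate and the general shape of the answer, not the stated monodromy. So: right strategy, identical to the paper's, but the decisive computation (compatible figures fixing $A,B$ and $w,x,y,z$, the normalised $\gamma_i$, the eight words $\gamma_k\gamma\gamma_i^{-1}$, and $h_*$ on generators) is missing, and the one instance you did attempt must be corrected.
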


\begin{proof}
We imagine $\Pe^*$ as a \textit{pillow case}, i.e.\ an euclidean rectangle $R$ with width-to-height ratio $2:1$, folded in half and stitched together in the obvious way. The two to one covering $h$ is then realised by taking two copies of $R$, rotating one of them by an angle of $\pi$, stitching them together and then identifying opposite edges in the usual way. See Figure \ref{fig:pillowcase} for a sketch of this situation.
\begin{figure}[h]
\begin{center}
\includegraphics[scale=0.8]{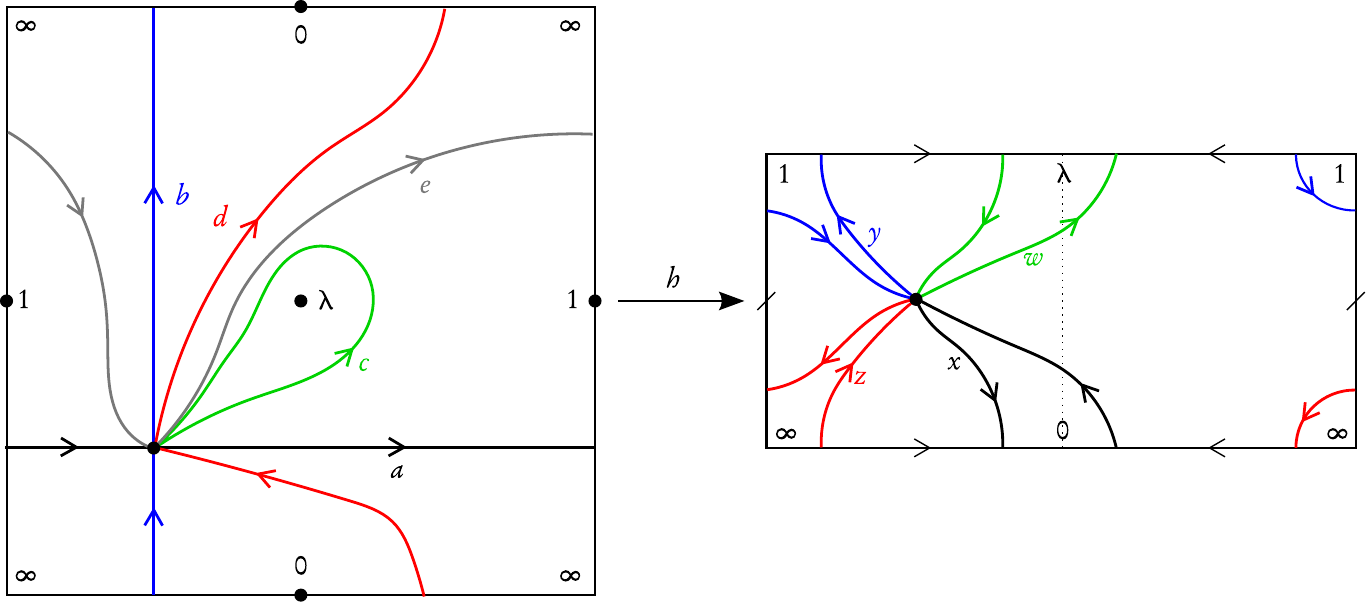}
\caption{}\label{fig:pillowcase}
\end{center}
\end{figure}
We have $\pi_1(\Pe^*)\cong F_3$. As depicted, take the paths $w,\,x,\,y,\,z$ as generators, subject to the relation $wxzy=1$. Now, we have $\pi_1(E\setminus\{0,\,1,\,\lambda,\,\infty\})\cong F_5$, and as in the picture we choose $a,\,b,\,c,\,d,\,e$ as free generators. We can verify easily that
\[h_*\colon \begin{cases} \pi_1(E\setminus\{0,\,1,\,\lambda,\,\infty\})\to \pi_1(\Pe^*),\\
a  \mapsto yw,\,
b  \mapsto x^{-1}w^{-1},\,
c  \mapsto w^2,\,
d  \mapsto xw^{-1},\,
e  \mapsto y^{-1}w^{-1}
\end{cases}.\]
Next we calculate the monodromy of the covering $\pi\colon X\setminus\pi^{-1}(\{0,\,1,\,\lambda,\,\infty\})\to E\setminus\{0,\,1,\,\lambda,\,\infty\}$.
As $\beta$ is unramified over $\lambda$, we have $m_\beta(w)=1$ and $m_\beta(z)=p_x^{-1}p_y^{-1}$. Applying Proposition \ref{satz:fprod} a) we get
\[m_\pi\colon \begin{cases}\pi_1(E\setminus\{0,\,1,\,\lambda,\,\infty\})\to S_d,\\
a \mapsto p_y,\,
b \mapsto p_x^{-1},\,
c \mapsto 1,\,
d \mapsto p_x,\,
e \mapsto p_y^{-1}
\end{cases}.\]

Now, we apply Proposition \ref{satz:comp} to calculate the monodromy of $[2]\circ\pi$. Choose a base point $x_0$ and paths $A$ and $B$ as generators of $\pi_1(E\setminus\{\infty\})$, and label the elements of $[2]^{-1}(x_0)$ by $y_1,\ldots,y_4$, as indicated in Figure \ref{fig:mult2}. With the sketched choice of numbering, we get that the monodromy of $[2]$ is given by
\[m_{[2]}(A)=(1\,2)(3\,4),\,\,m_{[2]}(B)=(1\,3)(2\,4).\]
\begin{figure}[h]
\begin{center}
\includegraphics[scale=0.8]{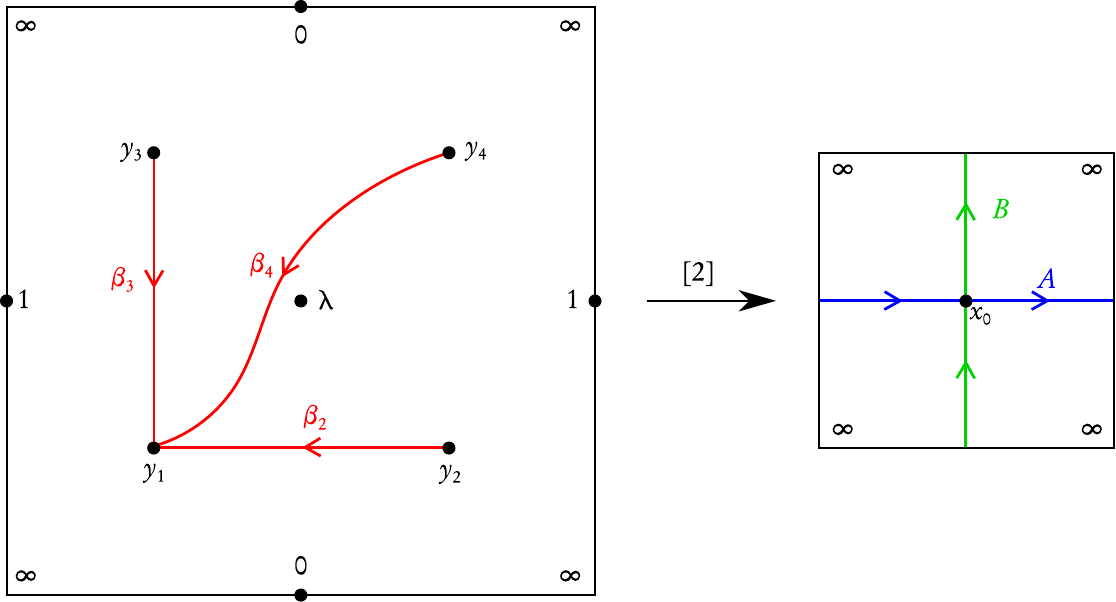}
\caption{}\label{fig:mult2}
\end{center}
\end{figure}
A right coset representatives of $m_{[2]}^{-1}(\Stab(1))$ in $\pi_1(E\setminus\{\infty\})$, we choose
\[\gamma_1\coloneqq 1,\,\gamma_2\coloneqq A^{-1},\,\gamma_3\coloneqq B^{-1},\,\gamma_4\coloneqq B^{-1}A^{-1}\]
and check that they do what they should by seeing that they lift along $[2]$ to paths $\beta_i$ connecting $y_i$ to $y_1$.
The next step is to calculate the $c_i$'s. We get
\[
\begin{split}
&c_i(A)=[2]_*^{-1}(\gamma_kA\gamma_i^{-1})=
\begin{cases}
[2]_*^{-1}(A^{-1}A\cdot 1)=1,    & i=1\\
[2]_*^{-1}(1\cdot AA)=a,         & i=2\\
[2]_*^{-1}(B^{-1}A^{-1}AB)=1,    & i=3\\
[2]_*^{-1}(B^{-1}AAB)=e,         & i=4
\end{cases}\quad\text{and}\\
&c_i(B)=[2]_*^{-1}(\gamma_kB\gamma_i^{-1})=
\begin{cases}
[2]_*^{-1}(B^{-1}B\cdot 1)=1,    & i=1\\
[2]_*^{-1}(B^{-1}A^{-1}BA)=c,    & i=2\\
[2]_*^{-1}(1\cdot BB)=b,         & i=3\\
[2]_*^{-1}(A^{-1}BAB)=d,         & i=4
\end{cases}.
\end{split}
\]
Putting all together, we get the claimed result
\[
\begin{split}
&m_{[2]\circ\pi}(A)(i,j)=\left(m_{[2]}(A)(i),\,m_\pi(c_i(A))(j)\right)=
\begin{cases}
(2,j)           & i=1\\
(1, p_y(j))     & i=2\\
(4,j)           & i=3\\
(3,p_y^{-1}(j)) & i=4
\end{cases}\quad\text{and}\\
&m_{[2]\circ\pi}(B)(i,j)=\left(m_{[2]}(B)(i),\,m_\pi(c_i(B))(j)\right)=
\begin{cases}
(3,j)           & i=1\\
(4,j)           & i=2\\
(1,p_x^{-1}(j)) & i=3\\
(2,p_x(j))      & i=4
\end{cases}.
\end{split}
\]
\end{proof}
Now we can easily fill in the gap left open in the beginning of the previous section:
\begin{bem}\label{rem:m-ori-connected} If we start with a Belyi morphism $\beta$, the topological space $X^*$ arising in the construction is always connected, so $O(\beta)$ is indeed an origami.
\end{bem}
\begin{proof}
What we have to show is that $m_A\coloneqq m_{[2]\circ\pi}(A)$ and $m_B\coloneqq m_{[2]\circ\pi}(B)$ generate a transitive subgroup of $S_{4d}$. So choose $i\in\{1,\ldots, 4\},\,j\in\{1,\ldots,d\}$, then it clearly suffices to construct a path $\gamma\in \pi_1(E\setminus\{\infty\})$ with the property that $m_{[2]\circ\pi}(\gamma)(1,1)=(i,j)$.

Now, as $Y^*$ is connected, there is a path $\gamma'\in\pi_1(\Pe^*)$ such that $m_\beta(\gamma')(1)=j$. Consider the homomorphism
\[\phi\colon \pi_1(\Pe^*)\to\pi_1(E\setminus\{\infty\}),\,\begin{cases} x \mapsto &B^{-2},\\ y \mapsto &A^2 \end{cases} \]
By the above theorem, a lift of the path $A^2$ connects the square of $O(\beta)$ labelled with $(1,k)$ to the one labelled with$(1,p_y(k))$, and a lift of $B^{-2}$ connects $(1,k)$ to $(1,p_x(k))$. So we get $m_{[2]\circ\pi}(\phi(\gamma'))(1,1)=(1,j)$. So to conclude, we set $\gamma\coloneqq \epsilon\phi(\gamma')$, where $\epsilon\coloneqq 1,\,A,\, B$ or  $AB$ for $i=1,\,2,\,3$ or $4$, respectively, and get $m_{[2]\circ\pi}(\gamma)(1,1)=(i,j)$.
\end{proof}

Later we will want to know the monodromy of the map $\pi$ around the Weierstraß points $0,\,1,\,\lambda,\,\infty\in E$, respectively. Therefore, we quickly read off of Figure \ref{fig:pillowcase}:
\begin{hilfssatz}\label{hilfssatz:mon-around-weierstrass}
If we choose the following simple loops around the Weierstraß points
\begin{align*}
x' \coloneqq  l_0 &\coloneqq  db^{-1} & y' \coloneqq  l_1 &\coloneqq  ac^{-1}e^{-1}\\
z' \coloneqq  l_\infty &\coloneqq  bed^{-1}a^{-1} & w' \coloneqq  l_\lambda &\coloneqq  c,
\end{align*}
then we have
\begin{align*}
h_*(x') &= x^2 & h_*(y') &= y^2\\
h_*(z') &= z^2 & h_*(w') &= w^2.
\end{align*}
\end{hilfssatz}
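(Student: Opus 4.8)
The plan is to prove the two halves of the statement separately: first, that the four words $l_0=db^{-1}$, $l_1=ac^{-1}e^{-1}$, $l_\infty=bed^{-1}a^{-1}$ and $l_\lambda=c$, written in the free generators $a,b,c,d,e$ of $\pi_1(E\setminus\{0,1,\lambda,\infty\})$, really are simple positively oriented loops around the Weierstraß points $0$, $1$, $\infty$ and $\lambda$; and second, the displayed identities $h_*(x')=x^2$, $h_*(y')=y^2$, $h_*(z')=z^2$, $h_*(w')=w^2$. The first half is pure bookkeeping: with the generators $a,\dots,e$ of $\pi_1(E\setminus\{0,1,\lambda,\infty\})$ and $w,x,y,z$ of $\pi_1(\Pe^*)$ fixed exactly as drawn in Figure~\ref{fig:pillowcase} (and the lift to $E$ as in Figure~\ref{fig:mult2}), one simply reads off the boundary loop of a small disc neighbourhood of each of the four ramification points of $h$ from the picture, expressing it in those generators; this produces precisely the four listed words.

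For the second half I would substitute into each word the expressions for $h_*$ on the generators that were already established in the proof of Theorem~\ref{satz:moncalc}, namely $a\mapsto yw$, $b\mapsto x^{-1}w^{-1}$, $c\mapsto w^2$, $d\mapsto xw^{-1}$, $e\mapsto y^{-1}w^{-1}$, and reduce the result inside the one-relator group $\pi_1(\Pe^*)=\langle w,x,y,z\mid wxzy=1\rangle$. For $w'=c$ this is immediate. For $x'=db^{-1}$ one obtains $(xw^{-1})(wx)=x^2$, and for $y'=ac^{-1}e^{-1}$ the factors $w^{\pm1}$ flanking $w^{-2}$ cancel and the word collapses to $y^2$; both are routine free reductions. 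The only case in which the defining relation of $\pi_1(\Pe^*)$ genuinely enters is $z'=bed^{-1}a^{-1}$: the substitution yields $x^{-1}w^{-1}y^{-1}x^{-1}w^{-1}y^{-1}=(x^{-1}w^{-1}y^{-1})^2$, and since $wxzy=1$ gives $x^{-1}w^{-1}y^{-1}=z$, the value is $z^2$.

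There is no real difficulty in the computation itself; the one point needing care — and essentially the only thing that could go wrong — is keeping every convention consistent with the proof of Theorem~\ref{satz:moncalc}: the precise choice and orientation of the generators $a,\dots,e$ and $w,x,y,z$, the matching of basepoints under $h$, and which puncture each generator encircles. Once that bookkeeping is trusted (which the pillow case picture is designed to make transparent), everything follows from the substitution above. As a sanity check one may apply the monodromy map $m_\pi$ from the proof of Theorem~\ref{satz:moncalc} to the four loops: it returns $p_x^2$, $p_y^2$, $(p_x^{-1}p_y^{-1})^2$ and $1$ respectively, exactly as expected since $h$ is doubly ramified over each Weierstraß point and $\beta$ is unramified over $\lambda$.
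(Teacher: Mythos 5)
Your computation is correct and amounts to the verification the paper leaves implicit: the paper simply reads the lemma off Figure \ref{fig:pillowcase}, and your substitution of $a\mapsto yw$, $b\mapsto x^{-1}w^{-1}$, $c\mapsto w^2$, $d\mapsto xw^{-1}$, $e\mapsto y^{-1}w^{-1}$ from the proof of Theorem \ref{satz:moncalc}, reduced with the relation $wxzy=1$ (so $z=x^{-1}w^{-1}y^{-1}$), checks out in all four cases. The only genuinely figure-dependent input, as you correctly flag, is that the listed words really are simple loops around the respective Weierstraß points, which is precisely the bookkeeping the paper also delegates to the picture.
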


\subsection{The genus and punctures of M-Origamis}
We calculate  the genus of the M-Origami associated to a Belyi morphism $\beta$ and give lower and upper bounds depending only on $g(Y)$ and $\deg \beta$. We have the following
\begin{prop}\label{prop:genus}
Let $\beta\colon Y\to\PeC$ be a Belyi morphism and $d=\deg\beta$ its degree, $m_0\coloneqq p_x,\, m_1\coloneqq p_y,\, m_\infty\coloneqq p_z=p_x^{-1}p_y^{-1}\in S_d$ the monodromy of the standard loops around $0,\,1$ and $\infty$. Further let $g_0,\,g_1,\,g_\infty$ be the number of cycles of even length in a disjoint cycle decomposition of $m_0,\, m_1$ and $m_\infty$, respectively. Then we have:
\begin{enumerate}
\item[a)] \[g(X)=g(Y)+d-\frac{1}{2}\sum_{i=0}^\infty g_i.\]
\item[b)] \[g(Y)+\left\lceil\frac{d}{4}\right\rceil\leq g(X) \leq g(Y)+d.\]
\end{enumerate}
\end{prop}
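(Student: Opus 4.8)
The plan is to derive a) from two applications of the Riemann--Hurwitz formula and then obtain b) by an elementary integrality argument.

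\emph{Part a).} I would work with the degree-$d$ covering $\pi\colon X\to E$ of the genus-$1$ curve $E$ rather than with $[2]\circ\pi$. By the Cartesian square established right after Proposition~\ref{prop:singularities}, the restriction $X^\ast\to E^\ast$ to the complement of the Weierstraß points is the topological pullback of $\beta$ along $h$, so by Proposition~\ref{satz:fprod}~a) its monodromy is $m_\pi=m_\beta\circ h_\ast$, and $X$ is the smooth compactification of $X^\ast$; hence the ramification of $\pi$ above a Weierstraß point is governed by the cycle type of the monodromy of $\pi$ along a small loop around that point. By Lemma~\ref{hilfssatz:mon-around-weierstrass} these loops are pushed by $h_\ast$ to $x^2,y^2,z^2$ above $0,1,\infty$ and to $w^2$ above $\lambda$; since $\beta$ is Belyi we have $m_\beta(w)=1$, so $\pi$ is unramified over $\lambda$, while above $0,1,\infty$ the monodromies have the cycle types of $p_x^2,p_y^2,p_z^2$. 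Using that squaring a permutation splits every cycle of even length $\ell$ into two cycles of length $\ell/2$ and leaves odd cycles intact, one gets $c(p_x^2)=c(p_x)+g_0$, and likewise for $p_y$ and $p_z$, where $c(\cdot)$ denotes the number of cycles. Thus the contributions to $\sum_P(e_P-1)$ of the Weierstraß points over $0,1,\infty$ are $d-c(p_x)-g_0$, $d-c(p_y)-g_1$, $d-c(p_z)-g_\infty$, and that over $\lambda$ is $0$, so Riemann--Hurwitz for $\pi$ (recall $g(E)=1$) gives
\[2g(X)-2=3d-\bigl(c(p_x)+c(p_y)+c(p_z)\bigr)-(g_0+g_1+g_\infty).\]
Riemann--Hurwitz for $\beta\colon Y\to\PeC$ reads $2g(Y)-2=d-\bigl(c(p_x)+c(p_y)+c(p_z)\bigr)$; subtracting the two relations to eliminate the $c$-terms yields $2g(X)=2g(Y)+2d-(g_0+g_1+g_\infty)$, which is exactly a). (Alternatively one could compute the monodromy of $[2]\circ\pi$ around the single branch point $\infty\in E$ from Theorem~\ref{satz:moncalc} and apply Riemann--Hurwitz to $[2]\circ\pi$ directly, but this route is messier.)

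\emph{Part b).} The upper bound $g(X)\le g(Y)+d$ is immediate from a) since every $g_i\ge 0$. For the lower bound, observe first that a) forces $\tfrac12(g_0+g_1+g_\infty)=g(Y)+d-g(X)\in\ZZ$, so $g_0+g_1+g_\infty$ is even. Moreover a cycle of even length occupies at least two of the $d$ letters, so $g_i\le d/2$ for each $i$ and hence $g_0+g_1+g_\infty\le 3d/2$. A nonnegative even integer that is at most $3d/2$ is, after halving, a nonnegative integer at most $3d/4$, hence at most $\lfloor 3d/4\rfloor=d-\lceil d/4\rceil$. Substituting this bound for $\tfrac12\sum g_i$ into the formula of a) gives $g(X)=g(Y)+d-\tfrac12(g_0+g_1+g_\infty)\ge g(Y)+\lceil d/4\rceil$.

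The only genuinely delicate point is the identification of the ramification of $\pi$ over the Weierstraß points with the cycle types of $p_x^2,p_y^2,p_z^2$ (and triviality over $\lambda$); this is precisely what Lemma~\ref{hilfssatz:mon-around-weierstrass} was set up to provide, together with the standard fact that passing to the smooth compactification of $X^\ast\to E^\ast$ reads the ramification indices off the local monodromy. Everything else is routine Riemann--Hurwitz bookkeeping plus the parity observation used in b).
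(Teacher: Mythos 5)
Your proposal is correct and follows essentially the same route as the paper: two applications of Riemann--Hurwitz (to $\beta$ over $\PeC$ and to $\pi$ over $E$), with the ramification of $\pi$ over the Weierstraß points identified via Lemma \ref{hilfssatz:mon-around-weierstrass} and Proposition \ref{satz:fprod} as the cycle types of $p_x^2,p_y^2,p_z^2$, and the even-cycle splitting accounting for the $g_i$. Your parity argument in b) is just a repackaging of the paper's integrality remark, so nothing essentially differs.
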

\begin{proof}
The Riemann-Hurwitz formula for $\beta$ says:
\[2g(Y)-2=d(2g(\Pe)-2)+\sum_{p\in Y}(e_p-1)\]
Let us now split up $\sum_{p\in Y}(e_p-1)=v_0+v_1+v_\infty$, where $v_i\coloneqq \sum_{p\in \beta^{-1}(i)}(e_p-1)$. Of course, $v_i=\sum_{c\text{ cycle in } m_i}(\len(c)-1)$, where $\len(c)$ is the length of a cycle $c$. Putting that, and the fact that $g(\Pe)=0$, into the last equation yields
\[2g(Y)-2=-2d+\sum_{i=0}^\infty v_i.\]
Now we write down the Riemann-Hurwitz formula for $\pi$. Note that $\pi$ is at most ramified over the preimages of $0,\,1$ and $\infty$ under $h$, which we denote also by $0,\,1$ and $\infty$. $\beta$ is unramified over the (image of the) fourth Weierstraß point, and so is $\pi$. So again the ramification term splits up into $\sum_{p\in X}(e_p-1)=v'_0+v'_1+v'_\infty$ with $v'_i=\sum_{c\text{ cycle in } m'_i}(\len(c)-1)$, where $m'_0, m'_1, m'_\infty$ are permutations describing the monodromy of $\pi$ going around the Weierstraß points $0, 1, \infty$ respectively. Of course $g(E)=1$, and so we get
\[2g(X)-2=\sum_{i=0}^\infty v'_i.\]
Subtracting from that equation the one above and dividing by $2$ yields
\[g(X)=g(Y)+d-\frac{1}{2}\sum_{i=0}^\infty(v_i-v'_i),\]
so we can conclude a) with the following
\begin{hilfssatz} $v_i-v'_i=g_i$, where $g_i$ is the number of cycles of even length in $m_i$.
\end{hilfssatz}
The proof of this lemma is elementary. Write $m'_0\coloneqq m_{\pi}(x'),\,m'_1\coloneqq m_{\pi}(y'),\allowbreak\,m'_\infty\coloneqq m_{\pi}(z')$ as in Lemma \ref{hilfssatz:mon-around-weierstrass}, then we get $m'_i=m_i^2$ by that lemma and Proposition \ref{satz:fprod}. Now, note that the square of a cycle of odd length yields a cycle of the same length, while the square of a cycle of even length is the product of two disjoint cycles of half length. So, if $c$ is such an even length cycle and $c^2=c_1c_2$, then obviously $(\len(c)-1)-((\len(c_1)-1)+(\len(c_2)-1))=1$. Summing up proves the lemma.\qed

In part b) of the proposition, the second inequality is obvious as all $g_i$ are non-negative. For the first one, note that we have $g_i\leq \frac{d}{2}$, so $\sum g_i\leq \frac{3}{2}d$, and finally $d-\frac{1}{2}\sum g_i\geq \frac{d}{4}$. Of course $g(X)$ and $g(Y)$ are natural numbers, so we can take the ceiling function.
\end{proof}

It can be explicitly shown that the upper bound of part b) is sharp in some sense: In \cite[Remark 4.9]{diss}, for each $g\in\NN$ and for $d$ large enough, we construct dessins of genus $g$ and degree $d$, such that the resulting M-Origami has genus $g+d$.

A direct consequence of Proposition \ref{prop:genus}, and the fact that there are only finitely many dessins up to a given degree, is the following observation: 

\begin{kor} \label{kor:finite} Given a natural number $G\in \NN$, there are only finitely many M-Origamis with genus less or equal to $G$.
\end{kor}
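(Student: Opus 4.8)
The plan is to reduce the claim to the two finiteness inputs already at hand: the genus bound of Proposition~\ref{prop:genus}~b) and Corollary~\ref{cor:dessins-finite-number} on the number of dessins of bounded degree. Recall that every M-Origami is by Definition~\ref{defi:m-ori}~b) of the form $O(\beta)$ for a Belyi morphism $\beta$, and that $O(\beta)$ is determined up to equivalence of origamis by $\beta$. So it suffices to bound the degree $d = \deg\beta$ of the Belyi morphisms that can possibly yield an M-Origami of genus at most $G$, and then to invoke finiteness of dessins in that bounded range.

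First I would bound the degree. Writing $X$ for the underlying surface of $O(\beta)$, Proposition~\ref{prop:genus}~b) gives $g(Y) + \lceil d/4 \rceil \leq g(X)$. Hence $g(X) \leq G$ forces $\lceil d/4 \rceil \leq G$, and since $d$ is a non-negative integer this yields $d \leq 4G$ (incidentally one also obtains $g(Y) \leq G$, which we shall not need).

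Next, by Corollary~\ref{cor:dessins-finite-number} there are, for each fixed $d$, only finitely many dessins of degree $d$ up to equivalence, hence by Proposition~\ref{prop:dessin-equiv} only finitely many Belyi morphisms of each degree $d$ up to the relevant isomorphism. Summing over the finite range $d \in \{1, \dots, 4G\}$ leaves only finitely many candidates $\beta$, and therefore the set of M-Origamis of genus at most $G$, being the image of this finite set under $\beta \mapsto O(\beta)$, is finite, as claimed.

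There is essentially no obstacle here; the statement is a formal corollary of the two cited results. The only point worth a sentence is the elementary passage from $\lceil d/4 \rceil \leq G$ to $d \leq 4G$. One might also remark that weakly isomorphic Belyi morphisms (differing by post-composition with an element of $W$) may produce inequivalent M-Origamis, but this at worst multiplies the count by $|W| = 6$ and does not affect the finiteness conclusion.
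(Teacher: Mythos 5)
Your argument is correct and is exactly the one the paper intends: the lower bound $g(Y)+\lceil d/4\rceil\leq g(X)$ from Proposition \ref{prop:genus}~b) bounds the degree of the underlying dessin by $4G$, and Corollary \ref{cor:dessins-finite-number} then leaves only finitely many dessins, hence finitely many M-Origamis. The paper states the corollary as a direct consequence of these two facts without further elaboration, so your write-up matches its proof in substance.
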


For example, there are five dessins giving M-Origamis of genus $1$ and nine giving M-Origamis of genus $2$.

With the notations of the above proposition and the considerations in the proof, we can easily count the number of punctures of an M-Origami:

\begin{bem}\label{bem:m-ori-punctures}
Let $\beta\colon Y\to\PeC$ be a Belyi morphism, $\pi\colon  X\to E$ the normalisation of its pullback by the elliptic involution as in Definition \ref{defi:m-ori}, and $O_\beta=(p\colon X\to E)$ the associated M-Origami.
\begin{aenum}
\item Let $W\coloneqq \{0,\,1,\,\lambda,\,\infty\}\subseteq E$ be the set of Weierstraß points of $E$, then we have:
\begin{align*}
n(P)&\coloneqq |\pi^{-1}(P)|= \#(\text{cycles in }m_P)+g_P,\:\:(P\in\{0,\,1,\,\infty\})\\
n(\lambda)&\coloneqq |\pi^{-1}(\lambda)|=\deg \pi=\deg\beta.\\
\end{align*}
\item For $O_\beta=(p\colon X\to E)$, we have:
\[|p^{-1}(\infty)|=n(0)+n(1)+n(\lambda)+n(\infty).\]
\end{aenum}
\end{bem}
\begin{proof}
Part b) is a direct consequence of a), as by definition $p=[2]\circ\pi$, and $[2]\colon E\to E$ is an unramified covering with $[2]^{-1}(\infty)=W$.

For a), the second equality is clear, since, as we noted above, $\pi$ is unramified over $\lambda$. If $P\in\{0,\,1\,\infty\}$, then $n(P)$ is the number of cycles in $m_P'=m_P^2$ (where we reuse the notation from the proof above). As a cycle in $m_P$ corresponds to one cycle in $m_P^2$ if its length is odd, and splits up into two cycles of half length if its length is even, we get the desired statement.
\end{proof}

\subsection{The Veech group}
We can now calculate the Veech group of an M-Origami with the help of Theorem \ref{satz:gabi-vg}. We begin by calculating how $\SL_2(\ZZ)$ acts:
\begin{prop}\label{prop:STaction}
Let $\beta$ be the dessin of degree $d$ given by a pair of permutations $(p_x,\,p_y)$, and $O_\beta$ the associated M-Origami. Then we have for the standard generators $S,\,T,\,-I$ of $\SL_2(\ZZ)$:
\begin{itemize}
\item $S\cdot O_\beta$ is the M-Origami associated to the pair of permutations $(p_y,\,p_x)$.
\item $T\cdot O_\beta$ is the M-Origami associated to the pair of permutations $(p_z,\,p_y)$, where as usual $p_z=p_x^{-1}p_y^{-1}$.
\item $(-I)\cdot O_\beta\cong O_\beta$, i.e.\ $-I\in\Gamma(O_\beta)$.
\end{itemize}
\end{prop}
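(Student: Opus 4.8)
The plan is to feed the explicit monodromy of $O_\beta$ from Theorem~\ref{satz:moncalc} into G.~Schmithüsen's description of the $\SL_2(\ZZ)$-action on monodromies, Theorem~\ref{satz:gabi-vg}~b). For each standard generator I fix a lift to $\Aut^+(\pi_1(E^\ast))\subseteq\Aut(F_2)$, $F_2=\langle A,B\rangle$: the automorphism $f_S\colon A\mapsto B,\, B\mapsto A^{-1}$ for $S$, the automorphism $f_T\colon A\mapsto A,\, B\mapsto AB$ for $T$, and $f_{-I}\colon A\mapsto A^{-1},\, B\mapsto B^{-1}$ for $-I$; a glance at the induced maps on the abelianisation $\ZZ^2$ confirms $\varphi(f_S)=S$, $\varphi(f_T)=T$, $\varphi(f_{-I})=-I$ (should the normalisation of $\Out^+(\pi_1(E^\ast))\cong\SL_2(\ZZ)$ underlying Theorem~\ref{satz:gabi-vg} differ, one takes the transposed or inverse lift instead, which changes nothing below). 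By Theorem~\ref{satz:gabi-vg}~b) the monodromy of $S\cdot O_\beta$ is then $(A,B)\mapsto(m_B,\,m_A^{-1})$, that of $T\cdot O_\beta$ is $(A,B)\mapsto(m_A,\,m_Am_B)$, and that of $(-I)\cdot O_\beta$ is $(A,B)\mapsto(m_A^{-1},\,m_B^{-1})$, where $m_A,m_B\in S_{4d}$ are the two permutations displayed in Theorem~\ref{satz:moncalc}.

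\textbf{Comparison with the transformed dessins.} Next I compare each of these with Theorem~\ref{satz:moncalc} applied to the transformed pair of permutations read off the table in Definition and Remark~\ref{defbem:weak-action}~c): $(p_y,p_x)$ for $S$ (the entry for $s\cdot\beta$), $(p_z,p_y)$ for $T$ (the entry for $t\cdot\beta$), and $(p_x,p_y)$ itself for $-I$. In each case the two permutation systems on $\{1,\dots,4\}\times\{1,\dots,d\}$ agree after conjugation by an explicit relabelling $\tau\in S_{4d}$ of the $4d$ squares: for $S$ one can take $\tau$ to permute the four ``sheet types'' $i\in\{1,\dots,4\}$ by the $4$-cycle $(1\,3\,4\,2)$ and act trivially on the second coordinate; for $-I$, $\tau$ can be the involution $(1\,4)(2\,3)$ on the sheet types, again trivial on the second coordinate; for $T$ one takes $\tau$ to exchange sheet types $3$ and $4$, twisted by $p_y$. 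Verifying that such $\tau$ do the job is a short direct check using the cyclic structure of $m_A$ and $m_B$. This proves the three assertions; in passing it shows that the $\SL_2(\ZZ)$-action on the underlying dessin of $O_\beta$ factors through $\SL_2(\ZZ)\twoheadrightarrow\SL_2(\ZZ/2)\cong W$, $S\mapsto s$, $T\mapsto t$, consistent with the later statement that $\Gamma(O_\beta)\supseteq\Gamma(2)$.

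\textbf{A conceptual argument for $-I$.} For the last bullet there is also an argument avoiding any computation. The affine self-map of $E=\CC/\ZZ^2$ with derivative $-I$ is precisely the elliptic involution $\iota\colon z+\ZZ^2\mapsto-z+\ZZ^2$, and by construction of the M-Origami we have $h\circ\iota=h$ and $[2]\circ\iota=\iota\circ[2]$. Passing to the punctured surfaces, $X^\ast\cong E^\ast\times_{\Pe^\ast}Y^\ast$ as coverings (the Remark following Proposition~\ref{prop:singularities}), so $\Psi\colon(e,y)\mapsto(\iota(e),y)$ is a well-defined self-homeomorphism of $X^\ast$ with $\pi\circ\Psi=\iota\circ\pi$, whence $p\circ\Psi=[2]\circ\pi\circ\Psi=[2]\circ\iota\circ\pi=\iota\circ p$. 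Thus the covering $\iota\circ p$ underlying $(-I)\cdot O_\beta$ is equivalent to $p$, i.e.\ $-I\in\Gamma(O_\beta)$.

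\textbf{Main obstacle.} The genuine work lies in producing the relabellings $\tau$ and in fixing conventions: the first coordinate of $\tau$ is not the na\"ive block permutation one might guess (for $S$ it is a $4$-cycle on the sheet types), and one must be careful which lift of $S$ — equivalently, whether $S$ or $S^{-1}$ — corresponds to $s$ under the conventions of Theorem~\ref{satz:gabi-vg}. Once that is sorted out, the proof is a routine substitution into the formulas of Theorem~\ref{satz:moncalc}.
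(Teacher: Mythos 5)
Your proposal is correct and takes essentially the same route as the paper: the paper lifts $S$, $T$, $-I$ to exactly the automorphisms of $F_2$ you chose, feeds the monodromy of Theorem \ref{satz:moncalc} through Theorem \ref{satz:gabi-vg} b), and then conjugates by explicit relabellings ($c_1$, $c_2$, $c_3$ there, your $\tau$'s here) onto the monodromy of the M-Origami of the transformed pair. The only deviation is the convention issue you flag yourself — the paper computes with $m_p\circ\phi^{-1}$ rather than $m_p\circ\phi$, so its conjugators differ slightly from yours, but both computations land on the same dessins and your stated $\tau$'s (the $4$-cycle $(1\,3\,4\,2)$ for $S$, the $p_y$-twisted exchange of sheets $3,4$ for $T$, and $(1\,4)(2\,3)$ for $-I$) do verify; your elliptic-involution argument for $-I$ is a correct extra not in the paper.
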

\begin{proof}
We lift  $S,\,T,\,-I\in \SL_2(\ZZ)\cong \text{Out}^+(F_2)$ to the following automorphisms of $F_2=\langle A,\,B\rangle$, respectively:
\begin{align*}
\phi_S\colon & A\mapsto B,\,B \mapsto A^{-1}\\
\phi_T\colon & A\mapsto A,\,B\mapsto AB\\
\phi_{-I}\colon & A\mapsto A^{-1},\,B\mapsto B^{-1}
\end{align*}
Let us, for brevity, write $m_w\coloneqq m_{[2]\circ\pi}(w)$ for an element $w\in F_2$, so in particular $O_\beta$ is given by the pair of permutations $(m_A,m_B)$ as in Theorem \ref{satz:moncalc}.

First we calculate the monodromy of the origami $S\cdot O_\beta$, which is given by $(m_{\phi_S^{-1}(A)},m_{\phi_S^{-1}(B)})$: We get
\[m_{\phi_S^{-1}(A)}(i,j)=m_B^{-1}(i,j)=\begin{cases}
                    (3,p_x(j)),&i=1\\
                    (4,p_x^{-1}(j)),&i=2\\
                    (1,j),&i=3\\
                    (2,j),&i=4\\
                \end{cases}\text{ and }m_{\phi_S^{-1}(B)}=m_A.\]
Now we conjugate this pair by the following permutation 
\[c_1\colon (1,\,j)\mapsto(2,\,j)\mapsto(4,\,j)\mapsto(3,\,j)\mapsto (1,\,j),\]
which does of course not change the origami it defines, and indeed we get:
\[c_1m_B^{-1}c_1^{-1}(i,j)=\begin{cases}
                    (2,j),&i=1\\
                    (1,p_x(j)),&i=2\\
                    (4,j),&i=3\\
                    (3,p_x^{-1}(j)),&i=4\\
                \end{cases},\,
c_1m_Ac_1^{-1}(i,j)=\begin{cases}
                    (3,j),&i=1\\
                    (4,j),&i=2\\
                    (1,p_y^{-1}(j)),&i=3\\
                    (2,p_y(j)),&i=4\\
                \end{cases},\]
which is clearly the M-Origami associated to the dessin given by the pair $(p_y,\,p_x)$.

Next, we discuss the action of the element $T$ in the same manner, and we get:
\[m_{\phi_T^{-1}(A)}=m_A\text{ and }
m_{\phi_T^{-1}(B)}(i,j)=m_A^{-1}m_B(i,j)=\begin{cases}
                    (4,p_y(j)),&i=1\\
                    (3,j),&i=2\\
                    (2,p_y^{-1}p_x^{-1}(j)),&i=3\\
                    (1,p_x(j)),&i=4
                \end{cases}.\]
The reader is invited to follow the author in not losing hope and verifying that with
%\[c_2\colon (i,j)\mapsto \begin{cases}
%                        (1,p_y(j)),&i=1\\
%                        (2,p_y(j)),&i=2\\
%                        (4,p_y(j)), &i=3\\
%                        (3,j), &i=4
%                     \end{cases}\]
\[c_2\colon (1,j)\mapsto (1,p_y(j)),\,
            (2,j)\mapsto (2,p_y(j)),\,
            (3,j)\mapsto (4,p_y(j)),\,
            (4,j)\mapsto (3,j)\]
we have
\[
c_2m_Ac_2^{-1}(i,j)=\begin{cases}
                    (2,j),&i=1\\
                    (1,p_y(j)),&i=2\\
                    (4,j),&i=3\\
                    (3,p_y^{-1}(j)),&i=4\\
                \end{cases},\,
c_2m_A^{-1}m_Bc_2^{-1}(i,j)=\begin{cases}
                    (3,j),&i=1\\
                    (4,j),&i=2\\
                    (1,p_yp_x(j)),&i=3\\
                    (2,p_x^{-1}p_y^{-1}(j)),&i=4\\
                \end{cases},
\]
which is the monodromy of the M-Origami associated to $(p_z,\,p_y)$.

For the element $-I\in \SL_2(\ZZ)$ we have to calculate $m_{\phi_{-I}^{-1}(A)}=m_A^{-1}$ and $m_{\phi_{-I}^{-1}(B)}=m_B^{-1}$ which evaluate as
\[ m_A^{-1}(i,j)=\begin{cases}
                    (2,p_y^{-1}(j)),&i=1\\
                    (1,j),&i=2\\
                    (4,p_y(j)),&i=3\\
                    (3,j),&i=4\\
                \end{cases},\,
  m_B^{-1}(i,j)=\begin{cases}
                    (3,p_x(j)),&i=1\\
                    (4,p_x^{-1}(j)),&i=2\\
                    (1,j),&i=3\\
                    (2,j),&i=4\\
                \end{cases}.
\]
In this case, the permutation
\[c_3\colon (1,j)\leftrightarrow (4,j),\, (2,j)\leftrightarrow (3,j)\]
does the trick and we verify that $c_3m_Ac_3=m_A^{-1},\,c_3m_Bc_3=m_B^{-1}$, so $-I\in\Gamma(O_\beta)$.
\end{proof}
It will turn out in the next theorem that the Veech group of M-Origamis often is $\Gamma(2)$. Therefore we list, as an easy corollary from the above proposition, the action of a set of coset representatives of $\Gamma(2)$ in $\SL_2(\ZZ)$.
\begin{kor}\label{kor:sl2-action}
For an M-Origami $O_\beta$ associated to a dessin $\beta$ given by $(p_x,\,p_y)$, and 
\[\alpha\in\{I,\,S,\,T,\,ST,\,TS,\,TST\},\]
$\alpha\cdot O_\beta$ is again an M-Origami, and it is associated to the dessin with the monodromy indicated in the following table:
\begin{center}
	\begin{tabular}{|c|c|c|c|c|c|}
	    \hline
		$I$ &   $S$ &   $T$ &   $ST$ &  $TS$ &  $TST$ \\\hline\hline
		$p_x$ & $p_y$ & $p_z$ & $p_y$ & $p_z$ & $p_x$ \\\hline
		$p_y$ & $p_x$ & $p_y$ & $p_z$ & $p_x$ & $p_z$ \\\hline
	\end{tabular}
\end{center}
\end{kor}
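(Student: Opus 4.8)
The plan is to deduce this formally from Proposition~\ref{prop:STaction} together with the fact that $A\mapsto(O\mapsto A\cdot O)$ is a \emph{left} action of $\SL_2(\ZZ)$ on isomorphism classes of origamis, so that $(A_1A_2)\cdot O=A_1\cdot(A_2\cdot O)$; the latter is immediate from Definition and Remark~\ref{defbem:veechgroup} and the fact, recalled there, that $\SL_2(\RR)$ acts on translation structures from the left. First I would record the following reformulation of Proposition~\ref{prop:STaction}: for \emph{any} dessin $\gamma$ represented by a pair $(q_x,q_y)$ one has $S\cdot O_\gamma\cong O_{s\cdot\gamma}$ and $T\cdot O_\gamma\cong O_{t\cdot\gamma}$, where $s\cdot\gamma=(q_y,q_x)$ and $t\cdot\gamma=(q_x^{-1}q_y^{-1},q_y)$ are the dessins produced by the $W$-action of Definition and Remark~\ref{defbem:weak-action}~c). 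Indeed these are exactly the first two bullet points of Proposition~\ref{prop:STaction}, applied to $\gamma$ in place of $\beta$. In particular, since $S$ and $T$ generate $\SL_2(\ZZ)$, every $\alpha\cdot O_\beta$ with $\alpha$ a word in $S$ and $T$ is again an M-Origami.

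Next I would iterate. Using that the $\SL_2(\ZZ)$-action is a left action and that $W$ acts on dessins by composition of Möbius transformations (so $s\cdot(t\cdot\gamma)=(s\circ t)\cdot\gamma$, and so on), one obtains
\[
(ST)\cdot O_\beta=S\cdot(T\cdot O_\beta)\cong S\cdot O_{t\cdot\beta}\cong O_{(s\circ t)\cdot\beta},
\]
and similarly $(TS)\cdot O_\beta\cong O_{(t\circ s)\cdot\beta}$ and $(TST)\cdot O_\beta\cong O_{(t\circ s\circ t)\cdot\beta}$. It then remains only to read off the three relevant entries of the table in Definition and Remark~\ref{defbem:weak-action}~c), namely $(s\circ t)\cdot\beta=(p_y,p_z)$, $(t\circ s)\cdot\beta=(p_z,p_x)$, and $(t\circ s\circ t)\cdot\beta=(p_x,p_z)$, and to combine them with the trivial $I$-column and the $S$- and $T$-columns furnished directly by Proposition~\ref{prop:STaction}; this is exactly the asserted table. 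For the sentence preceding the statement one additionally checks that $\{I,S,T,ST,TS,TST\}$ maps bijectively onto $\SL_2(\ZZ)/\Gamma(2)\cong S_3$ under reduction mod $2$, hence is a full system of coset representatives for $\Gamma(2)$.

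I do not expect any genuine obstacle here; the only point requiring a word of care is that a pair of permutations determines a dessin, and the monodromy of Theorem~\ref{satz:moncalc} determines an M-Origami, only \emph{up to simultaneous conjugation}. Consequently the naive composite $s\cdot(t\cdot\beta)$ may differ from the tabulated representative of $(s\circ t)\cdot\beta$ by a simultaneous conjugation --- this is precisely the role played by the conjugating permutations $c_1,c_2,c_3$ in the proof of Proposition~\ref{prop:STaction} --- but since both that proposition and the table of Definition and Remark~\ref{defbem:weak-action}~c) are already stated modulo this equivalence, nothing new has to be verified. All the substantive content sits in Proposition~\ref{prop:STaction}, and the corollary is a purely formal consequence of it.
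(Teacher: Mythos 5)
Your argument is correct and is essentially the paper's own proof: there, too, the $I$, $S$, $T$ columns are taken directly from Proposition~\ref{prop:STaction}, and the remaining entries are obtained by iterating it via the left-action property, $ST\cdot O_\beta=S\cdot(T\cdot O_\beta)$, etc. The only cosmetic difference is that the paper recomputes the composite permutation pairs by hand (noting, e.g., that $(p_y^{-1}p_x^{-1},\,p_x)$ is simultaneously conjugate to $(p_z,\,p_x)$), whereas you read the same data off the $W$-action table of Definition and Remark~\ref{defbem:weak-action}~c), which encodes exactly those computations.
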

\begin{proof}
The first three columns of the above table are true by the above proposition. If we write $M(p_x,\,p_y)$ for the M-Origami associated to the dessin given by $(p_x,\,p_y)$, then we calculate
\begin{gather*}
ST\cdot M(p_x,\,p_y)=S\cdot M(p_z,\,p_y)=M(p_y,\,p_z),\\
TS\cdot M(p_x,\,p_y)=T\cdot M(p_y,\,p_x)=M(p_y^{-1}p_x^{-1},p_x)\cong M(p_z,\,p_x),\\
TST\cdot M(p_x,\,p_y)=T\cdot M(p_y,p_z)=M(p_y^{-1}p_z^{-1},p_z)=M(p_x,\,p_z).
\end{gather*}
\end{proof}
\begin{satz}\label{satz:veechgroup}
Let, again, $\beta$ be a dessin given by the pair of permutations $(p_x,\,p_y)$.
\begin{aenum}
\item For the associated M-Origami $O_\beta$, we have $\Gamma(2)\subseteq\Gamma(O_\beta)$.
\item The orbit of $O_\beta$ under $\SL_2(\ZZ)$ precisely consists of the M-Origamis associated to the dessins weakly isomorphic to $\beta$.
\item If $\Gamma(2)=\Gamma(O_\beta)$ then $\beta$ has no non-trivial weak automorphism, i.e.\ $W_\beta=\{\id\}$. In the case that $\beta$ is filthy, the converse is also true.
\end{aenum}
\end{satz}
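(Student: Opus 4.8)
The plan is to reduce parts a) and b) to a single group homomorphism. Factoring through $\PSL_2(\ZZ)\cong\ZZ/2\ast\ZZ/3$, define $\rho\colon\SL_2(\ZZ)\to W$ by $S\mapsto s$ and $T\mapsto t$; this is well defined because $s^2=(st)^3=\id$ in $W\cong S_3$, it is surjective because $s,t$ generate $W$, and its kernel is $\Gamma(2)$ (the kernel of the standard quotient $\PSL_2(\ZZ)\to\PSL_2(\ZZ)/\overline{\Gamma(2)}\cong S_3$ is $\overline{\Gamma(2)}$, and $-I\in\Gamma(2)$). Using Proposition \ref{prop:STaction} on the generators $S,T,-I$ together with the fact that $O_\beta$ depends only on the isomorphism class of $\beta$, an induction on word length gives the compatibility
\[\gamma\cdot O_\beta\cong O_{\rho(\gamma)\cdot\beta}\qquad\text{for all }\gamma\in\SL_2(\ZZ).\]
Part a) then follows at once: if $\gamma\in\ker\rho=\Gamma(2)$ then $\gamma\cdot O_\beta\cong O_\beta$, so $\gamma\in\Gamma(O_\beta)$ by Theorem \ref{satz:gabi-vg}. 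For part b), ``$\subseteq$'' holds because a) makes the orbit of $O_\beta$ coincide with the images of the six coset representatives $I,S,T,ST,TS,TST$, which by Corollary \ref{kor:sl2-action} (matched against the table in Definition and Remark \ref{defbem:weak-action}) are exactly the M-Origamis of the dessins $\beta,s\beta,t\beta,(st)\beta,(ts)\beta,(tst)\beta$, i.e.\ of all dessins weakly isomorphic to $\beta$; ``$\supseteq$'' holds because every such dessin is $\rho(\gamma)\cdot\beta$ for a suitable $\gamma$ by surjectivity of $\rho$, so $O_{\rho(\gamma)\beta}\cong\gamma\cdot O_\beta$ lies in the orbit.

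For part c) I would first record that $\Gamma(O_\beta)\supseteq\Gamma(2)=\ker\rho$ forces $\Gamma(O_\beta)=\rho^{-1}(W')$ with $W'\coloneqq\{w\in W\mid O_{w\beta}\cong O_\beta\}$ a subgroup of $W$; hence $\Gamma(O_\beta)=\Gamma(2)$ is equivalent to $W'=\{\id\}$. For (i), if $\id\neq w\in W_\beta$ then $w\beta\cong\beta$ as dessins, so $O_{w\beta}\cong O_\beta$ and $w\in W'$; thus $W'\neq\{\id\}$ and $\Gamma(2)\subsetneq\Gamma(O_\beta)$. This direction uses nothing but the construction and the compatibility above.

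For the converse in (ii), assume $\beta$ filthy and $W_\beta=\{\id\}$; it remains to show $W'\subseteq W_\beta$, i.e.\ that $O_{w\beta}\cong O_\beta$ implies $w\beta\cong\beta$ (note $w\beta$ is again filthy, since by Definition and Remark \ref{defbem:clean-dessins} filthiness depends only on the $W$-stable set $\{p_x,p_y,p_z\}$). Given an isomorphism $\Phi$ from $O_\beta=([2]\circ\pi\colon X\to E)$ to $O_{w\beta}=([2]\circ\pi'\colon X'\to E)$, I would apply Lemma \ref{hilfssatz:decktrafo} to the normal covering $[2]\colon E\setminus W\to E^*$ (Galois group $E[2]$, Hausdorff base, connected total spaces by Remark \ref{rem:m-ori-connected}) to obtain $\psi\in E[2]$ with $\psi\circ\pi=\pi'\circ\Phi$. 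Extending to the smooth compactifications and comparing ramification, $\psi$ carries the branch locus of $\pi$ onto that of $\pi'$. By Lemma \ref{hilfssatz:mon-around-weierstrass} together with Proposition \ref{satz:fprod}, the monodromy of $\pi$ around $0,1,\infty$ is $p_x^2,p_y^2,p_z^2$ respectively while $\pi$ is unramified over $\lambda$; filthiness makes all three squares non-trivial, so the branch locus of $\pi$ is exactly $\{0,1,\infty\}$, and likewise for $\pi'$. Hence $\psi$ fixes $\lambda$; since $E[2]$ acts regularly on $W$ this forces $\psi=\id$, so $\Phi$ is an isomorphism $\pi\cong\pi'$ over $E$. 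To conclude I would descend along the degree-$2$ quotient $h$: the lift of the elliptic involution is an involution $\tilde\iota$ on $X$ over $\iota$ with $X/\tilde\iota\cong Y$ and $\pi$ descending to $\beta$ (similarly $\tilde\iota'$ on $X'$), and after modifying $\Phi$ by a deck transformation of $\pi'$ over $E$ it conjugates $\tilde\iota$ to $\tilde\iota'$, hence descends to an isomorphism of Belyi morphisms $\beta\cong w\beta$, giving $w\in W_\beta=\{\id\}$, so $W'=\{\id\}$ and $\Gamma(O_\beta)=\Gamma(2)$.

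The hard part will be this last descent. An isomorphism of the pullbacks $\pi\cong\pi'$ over $E$ only determines the target dessin up to the action of $\Deck(\pi'/E)$ on the lifts of $\iota$ (equivalently, up to $\text{Trans}(O_{w\beta})$), so $\Phi$ need not intertwine $\tilde\iota$ and $\tilde\iota'$ on the nose, and one must show that the discrepancy—a deck transformation of $\pi'$ over $E$ constrained by squaring to the identity against $\tilde\iota'$—can be absorbed into $\Phi$. When $\beta$ has trivial automorphism group this is immediate, because then $\pi$ has trivial deck group over $E$ and $\tilde\iota$ is the unique lift of $\iota$; the general case requires a closer analysis of how this deck group interacts with $\tilde\iota$. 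The filthiness hypothesis is used only in the first descent step: it pins the branch locus of $\pi$ down to $\{0,1,\infty\}$, which is exactly what rules out the non-trivial half-period translations $\psi$ that would otherwise mix the distinguished Weierstraß point $\lambda$ into $\{0,1,\infty\}$—and this is precisely the mechanism that fails when $\beta$ is (weakly) pre-clean.
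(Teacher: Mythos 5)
Your treatment of a), b) and of the implication $\Gamma(2)=\Gamma(O_\beta)\Rightarrow W_\beta=\{\id\}$ is correct and is essentially the paper's own route, repackaged through the homomorphism $\rho\colon\SL_2(\ZZ)\to W$ with kernel $\Gamma(2)$ and the equivariance $\gamma\cdot O_\beta\cong O_{\rho(\gamma)\cdot\beta}$; this is exactly the content of Proposition \ref{prop:STaction}, Corollary \ref{kor:sl2-action} and Proposition \ref{prop:equivariant-action}, and the counting argument for the forward direction matches the paper's.

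The genuine gap is in the converse direction of c). You correctly reduce, via Lemma \ref{hilfssatz:decktrafo} (in the paper, Lemma \ref{hilfssatz:deck-existence}) and the filthiness hypothesis, to an isomorphism $\pi\cong\pi'$ of the two pullback coverings over the identity of $E$; but you then try to convert this homeomorphism into an isomorphism of dessins by descending it through the elliptic involution, and you concede that the discrepancy in $\Deck(\pi'/E)$ can only be absorbed when $\Aut(\beta)$ is trivial. As written, the proof is therefore incomplete exactly at the step everything hinges on. The observation that closes it --- and this is how the paper proceeds, via Lemma \ref{hilfssatz:beta-to-pi} --- is that no descent of the map is needed at all: since $\pi$ is the pullback of $\beta$ along $h$, Proposition \ref{satz:fprod} a) together with the explicit generators in the proof of Theorem \ref{satz:moncalc} gives $m_\pi(a)=p_y$ and $m_\pi(d)=p_x$ (using $m_\beta(w)=1$, as $\beta$ is unramified over $\lambda$). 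Hence any isomorphism $\pi\cong\pi'$ of coverings of $E$ produces a single permutation $\alpha$ with $\alpha p_x\alpha^{-1}=p'_x$ and $\alpha p_y\alpha^{-1}=p'_y$ simultaneously, i.e.\ an isomorphism of dessins $\beta\cong w\cdot\beta$, so $w\in W_\beta=\{\id\}$, your group $W'$ is trivial, and $\Gamma(O_\beta)=\Gamma(2)$. In short, the monodromy pair $(p_x,p_y)$ is directly readable off the monodromy of $\pi$, which makes the whole analysis of lifts of the elliptic involution unnecessary; without this (or some substitute for your unproved absorption step) the converse in c) is not established.
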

\begin{proof}
\begin{aenum}
\item We have $\Gamma(2)=\langle T^2,\,ST^{-2}S^{-1},\,-I\rangle$, so we have to show that these three matrices are elements of $\Gamma(O_\beta)$. We already know by Proposition \ref{prop:STaction} that $-I$ acts trivially. Using the notation of the proof of the above corollary, we calculate:
\begin{gather*}
T^2\cdot M(p_x,\,p_y)=T\cdot M(p_z,\,p_y)=M(p_z^{-1}p_y^{-1},\,p_y)\cong M(p_x,\,p_y),\\
ST^{-2}S^{-1}\cdot M(p_x,\,p_y)=ST^{-2}\cdot M(p_y,\,p_x)=S\cdot M(p_y,\,p_x) = M(p_x,\,p_y).
\end{gather*}
\item By a), the orbit of $O_\beta$ under $\SL_2(\ZZ)$ is the set of translates of $O_\beta$ under a set of coset representatives of $\Gamma(2)$ in $\SL_2(\ZZ)$. We have calculated them in the above corollary, and indeed they are associated to the dessins weakly isomorphic to $\beta$.
\item “$\Rightarrow$”: Let $\Gamma(2)=\Gamma(O_\beta)$, then by part b) we have
\[6=|\SL_2(\ZZ)\cdot O_\beta|\leq |W\cdot \beta|\leq 6\]
so we have equality and indeed $W_\beta$ is trivial.\\
“$\Leftarrow$”: For now, fix an element $\id\neq w\in W$. By assumption, $\beta\ncong\beta'\coloneqq w\cdot\beta$. Let $\pi,\,\pi'$ be their pullbacks by $h$ as in Definition \ref{defi:m-ori}. By Lemma \ref{hilfssatz:beta-to-pi}\footnote{Note that this and the following lemma logically depend only on the calculations in the proof of Theorem \ref{satz:moncalc}.}, we have $\pi\ncong\pi'$. Now assume $O_\beta\cong O_{\beta'}$, so by Lemma \ref{hilfssatz:deck-existence}, there exists a deck transformation $\varphi\in\Deck ([2])$ such that $\pi'\cong \varphi\circ \pi$. But since $\beta$ (and so $\beta'$) is filthy, $\varphi$ has to fix $\lambda\in E$, so it is the identity. This is the desired contradiction to $\pi\ncong\pi'$. By varying $w$ we get $|\SL_2(\ZZ)\cdot O_\beta|=6$, so in particular $\Gamma(O_\beta)=\Gamma(2)$.
\end{aenum}
\end{proof}

Part b) of the above theorem indicates a relationship between weakly isomorphic dessins and affinely equivalent M-Origamis. Let us understand this a bit more conceptually:

\begin{prop}\label{prop:equivariant-action}
The group $W$ from Definition and Remark \ref{defbem:weak-action} acts on the set of origamis whose Veech group contains $\Gamma(2)$ via the group isomorphism
\[\phi\colon W\to \SL_2(\ZZ)/\Gamma(2),\,\begin{cases}s\mapsto \overline{S}\\t\mapsto\overline{T}\end{cases}.\]
Furthermore, the map $M\colon \beta\mapsto O_\beta$, sending a dessin to the corresponding M-Origami, is $W$-equivariant.
\end{prop}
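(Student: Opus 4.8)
\emph{Plan.} The statement is obtained by assembling results that are already in hand, in three steps: first checking that $\phi$ is a well-defined isomorphism, then producing the $W$-action on the relevant set of origamis, and finally verifying equivariance of $M$ on the generators $s,t$ of $W$.

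\emph{Step 1: $\phi$ is a group isomorphism.} Since $W\cong S_3$, and the explicit formulas $s\colon z\mapsto 1-z$, $t\colon z\mapsto z^{-1}$ give $s^2=t^2=(st)^3=\id$, the group $W$ admits the presentation $\langle s,t\mid s^2=t^2=(st)^3=\id\rangle$. Hence it suffices to check that $\overline S,\overline T\in\SL_2(\ZZ)/\Gamma(2)$ satisfy the same three relations, which follows from $S^2=-I$, $T^2=\left(\begin{smallmatrix}1&2\\0&1\end{smallmatrix}\right)$ and $(ST)^3=-I$, all of which lie in $\Gamma(2)$. Thus $\phi$ is a well-defined homomorphism; it is surjective because $S$ and $T$ generate $\SL_2(\ZZ)$, and since $|W|=6=[\SL_2(\ZZ):\Gamma(2)]$ it is an isomorphism.

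\emph{Step 2: the $W$-action.} Let $\mathcal O$ be the set of isomorphism classes of origamis $O$ with $\Gamma(2)\subseteq\Gamma(O)$. By Proposition~\ref{prop:ori-veechgroup}~b) we have $\Gamma(A\cdot O)=A\Gamma(O)A^{-1}\supseteq A\Gamma(2)A^{-1}=\Gamma(2)$, using that $\Gamma(2)$ is normal in $\SL_2(\ZZ)$, so $\mathcal O$ is $\SL_2(\ZZ)$-invariant. Moreover the $\SL_2(\ZZ)$-action on $\mathcal O$ factors through $\SL_2(\ZZ)/\Gamma(2)$: for $A\in\Gamma(2)\subseteq\Gamma(O)$ Proposition~\ref{prop:ori-veechgroup}~c) gives $A\cdot O\cong O$, so whenever $A^{-1}A'\in\Gamma(2)$ one has $A'\cdot O=A\cdot\big((A^{-1}A')\cdot O\big)\cong A\cdot O$. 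Transporting the induced action of $\SL_2(\ZZ)/\Gamma(2)$ along $\phi$ yields the asserted left $W$-action on $\mathcal O$, namely $w\cdot O\coloneqq\phi(w)\cdot O$.

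\emph{Step 3: equivariance of $M$.} By Theorem~\ref{satz:veechgroup}~a) the map $M$ takes values in $\mathcal O$, so the claim is meaningful. Since both $W$-actions are genuine group actions and $W=\langle s,t\rangle$ with $s,t$ involutions, it suffices to verify $M(s\cdot\beta)\cong\phi(s)\cdot M(\beta)$ and $M(t\cdot\beta)\cong\phi(t)\cdot M(\beta)$ for every dessin $\beta$; this then propagates to arbitrary words in $s$ and $t$. If $\beta$ is given by $(p_x,p_y)$, then $s\cdot\beta$ is the dessin $(p_y,p_x)$ and $t\cdot\beta$ the dessin $(p_z,p_y)$ by the table in Definition and Remark~\ref{defbem:weak-action}~c); on the other hand, by Proposition~\ref{prop:STaction}, $\phi(s)\cdot O_\beta=S\cdot O_\beta$ and $\phi(t)\cdot O_\beta=T\cdot O_\beta$ are precisely the M-Origamis associated to $(p_y,p_x)$ and $(p_z,p_y)$, respectively. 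Comparing the two sides gives the required isomorphisms, and hence equivariance. I do not expect a genuine obstacle here; the only point demanding care is the bookkeeping of left actions — in particular confirming that the $\SL_2(\ZZ)$-action on $\mathcal O$ really descends to $\SL_2(\ZZ)/\Gamma(2)$ before it is transported to $W$ via $\phi$ — together with keeping the ``associated M-Origami'' identifications of Proposition~\ref{prop:STaction} aligned with the permutation table of Definition and Remark~\ref{defbem:weak-action}~c).
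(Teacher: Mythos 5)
Your proposal is correct and follows essentially the same route as the paper: the $\SL_2(\ZZ)$-action on origamis with Veech group containing $\Gamma(2)$ descends to $\SL_2(\ZZ)/\Gamma(2)$ via (the proof of) Proposition \ref{prop:ori-veechgroup} c), and equivariance is read off from Definition and Remark \ref{defbem:weak-action} c) together with the $S$- and $T$-computations of Proposition \ref{prop:STaction}. The only cosmetic differences are that you verify explicitly that $\phi$ is an isomorphism and check equivariance on the generators $s,t$ with a propagation argument, whereas the paper compares the full tables of Definition and Remark \ref{defbem:weak-action} and Corollary \ref{kor:sl2-action}.
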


\begin{proof}
By the proof of Proposition \ref{prop:ori-veechgroup} c), the action of $\SL_2(\ZZ)$ on the set of origamis whose Veech group contains $\Gamma(2)$ factors through $\Gamma(2)$. So any group homomorphism $G\to\SL_2(\ZZ)/\Gamma(2)$ defines an action of $G$ on this set. To see that the map $M$ is equivariant with respect to the actions of $W$ on dessins and M-Origamis, respectively, amounts to comparing the tables in Definition and Remark \ref{defbem:weak-action} and Corollary \ref{kor:sl2-action}.
\end{proof}

\subsection{Cylinder decomposition}
By the results of the above section, for an M-Origami $O_\beta$ we find that $\Hp/\Gamma(2)\cong\PeC\setminus\{0,\,1,\,\infty\}$ covers its origami curve $C(O_\beta)$ which therefore has at most three cusps. By Proposition \ref{prop:ori-strebel-cylinders} a) this means that $O_\beta$ has at most three non-equivalent Strebel directions, namely $\left(\begin{smallmatrix}1\\0\end{smallmatrix}\right),\,\left(\begin{smallmatrix}0\\1\end{smallmatrix}\right)$ and $\left(\begin{smallmatrix}1\\1\end{smallmatrix}\right)$. For each of these, we will calculate the cylinder decomposition. Before, let us prove the following lemma which will help us assert a peculiar condition appearing in the calculation of the decomposition:
\begin{hilfssatz}\label{hilfssatz:cyl-lemma}
Let $\beta$ be a Belyi morphism, and let $p_x, \,p_y,\, p_z$ be the monodromy around $0,\,1,\,\infty$ as usual. If, for one cycle of $p_y$ that we denote w.l.o.g.\ by $(1\ldots k)$, $k\leq \deg(\beta)$, we have
\[\forall i=1,\ldots,k\colon  p_x^2(i)=p_z^2(i)=i,\]
then the dessin representing $\beta$ appears in Figure \ref{fig:cyl-lemma} (where $D_n,\, E_n,\, F_n$ and $G_n$ are dessins with $n$ black vertices---so we can even write $A=F_1,\,B=D_1,\,C=G_2$):
\begin{figure}[h]
\begin{center}
 \includegraphics[scale=0.8]{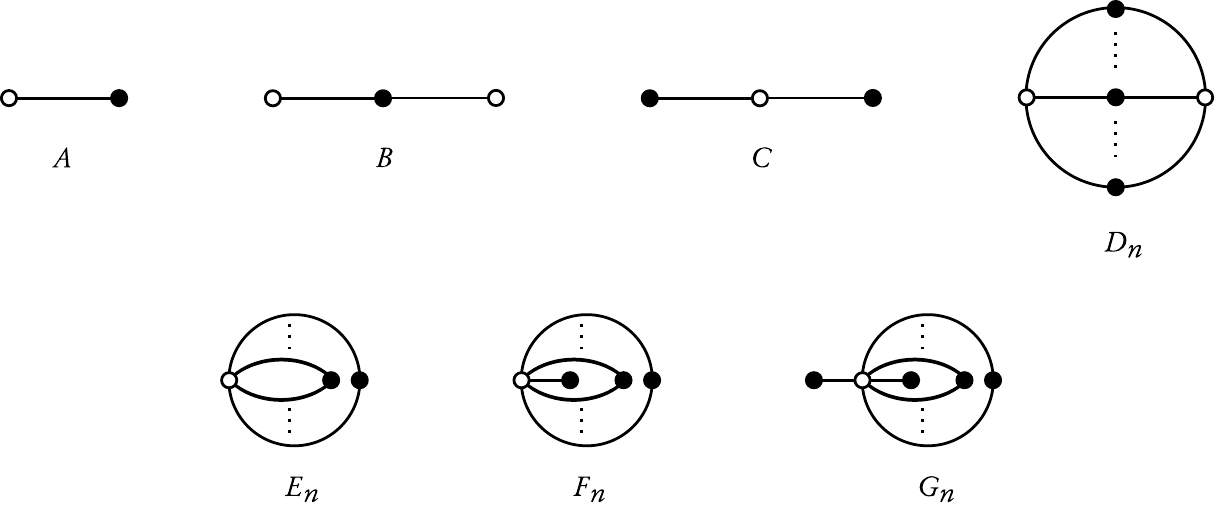}
 \caption{}\label{fig:cyl-lemma}
\end{center}
\end{figure}
Furthermore, under these conditions $\beta$ is defined over $\QQ$.
\end{hilfssatz}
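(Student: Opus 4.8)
The statement has two parts --- a combinatorial classification of $\beta$ and its descent to $\QQ$ --- and the second will follow from the first. I would begin by translating the hypotheses into the language of dessins, using the dictionary of Proposition \ref{prop:dessin-equiv} and the explicit description following it. The cycle $(1\ldots k)$ of $p_y$ is a white vertex $w_0$ of valence $k$, around which the edges $1,\ldots,k$ occur in counter-clockwise cyclic order with $p_y(i)=i+1\pmod k$. The condition $p_x^2(i)=i$ then says that the black vertex incident to edge $i$ is univalent or bivalent, and $p_z^2(i)=i$ says that the cell of the dessin bordering edge $i$ corresponds to a cycle of $p_z=p_x^{-1}p_y^{-1}$ of length $1$ or $2$. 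The key local computation is to unwind, for $i\le k$, what these two constraints jointly impose: using $p_y^{-1}(i)=i-1$ one finds that either $p_x(i)\in\{1,\ldots,k\}$, or else $j\coloneqq p_x(i)$ lies outside the cycle but then necessarily $p_y(j)=p_x^{-1}(i-1)$; in words, the second edge $j$ of a bivalent black vertex at $w_0$ is glued, through a single application of $p_y$, to the black vertex sitting at the edge $i-1$ of $w_0$.

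Iterating this observation around the cycle $1,2,\ldots,k$ is the heart of the proof: every edge not among $\{1,\ldots,k\}$ turns out to be the second edge of a bivalent black vertex whose first edge lies in $\{1,\ldots,k\}$, and these second edges are cyclically permuted by $p_y$ in the order dual to $1,\ldots,k$. Together with connectedness this forces the whole dessin to have at most two white vertices and only univalent or bivalent black vertices, and --- the two cyclic orders at the two white vertices being mutually reversed --- to have genus $0$. A finite case analysis then remains, on (i) whether there are one or two white vertices, (ii) how many bivalent black vertices join them, or form loops at a single white vertex, and (iii) how many univalent black vertices are attached and in which cells; it produces exactly the list $D_n,E_n,F_n,G_n$ of Figure \ref{fig:cyl-lemma}. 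I expect this enumeration to be the main obstacle: one must keep careful track of the embedding (the cyclic orders) so as not to miss or double-count a configuration, in particular when both edges of a bivalent black vertex belong to the cycle of $w_0$.

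For the last sentence, once the classification is available it suffices to note that every dessin in Figure \ref{fig:cyl-lemma} is realised by an explicit Belyi morphism $\PeC\to\PeC$ with coefficients in $\QQ$ --- a suitable composition of power maps $z\mapsto z^m$ and Möbius transformations defined over $\QQ$ does the job in each family --- so that $\beta$ is defined over $\QQ$. (Alternatively, each of these dessins is rigid, i.e.\ the unique dessin with its ramification data, so that its $\absGal$-orbit is a single point and $M_\beta=\QQ$ by Proposition \ref{prop:fields-properties}.)
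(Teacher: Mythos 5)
Your argument is correct and arrives at the same classification, but it is organized rather differently from the paper's proof, so a comparison is worthwhile. The paper argues face-by-face: the condition $p_z^2(i)=i$ makes every cell touching the white vertex $w$ have length $1$ or $2$, the paper lists the seven ways such a cell can carry admissible boundary identifications (black valences at most $2$, colours respected), and then enumerates how these building blocks can be glued around $w$ until the $p_y$-cycle closes; this produces the families $A,B,C,D_n,E_n,F_n,G_n$ directly as pictures, with completeness of the enumeration partly left to the reader. You instead extract the permutation identity $p_y(p_x(i))=p_x^{-1}(i-1)=p_x(i-1)$ from $p_z^2(i)=i$ and $p_x^2(i)=i$, so that the second edges $e_i\coloneqq p_x(i)$ form a single $p_y$-cycle traversed in reversed order; since $\{1,\ldots,k\}\cup\{e_1,\ldots,e_k\}$ is closed under $p_x$ and $p_y$, connectedness identifies it with the whole edge set, and the dichotomy --- either $\{e_i\}$ meets $\{1,\ldots,k\}$, forcing $p_x(i)=c-i$, a non-crossing pairing with at most two fixed points (the one-white-vertex families $E_n,F_n,G_n$ with $A,C$ as degenerate members), or it does not, forcing the two-white-vertex dessin $D_k$ (with $B=D_1$) --- replaces the paper's gluing case analysis. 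Your version buys a more mechanical completeness check (the point the paper leaves to the reader), at the price of losing the ready-made pictures of the figure. Two caveats. First, your genus-$0$ justification via ``mutually reversed cyclic orders'' only covers the two-white-vertex case; in the one-vertex case genus $0$ comes from the non-crossing nature of $i\mapsto c-i$ (or simply from the final list), so state it that way. Second, your first suggestion for $\QQ$-definability does not work in general: $F_2$ has degree $3$ and three distinct critical points, whereas any degree-$3$ composition of power maps and Möbius transformations is of the form $M_1\circ z^3\circ M_2$ and has only two; so you should rely on your parenthetical rigidity argument (each listed dessin is the unique one with its ramification data, hence has moduli field $\QQ$), which is exactly the paper's concluding one-liner.
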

%\textit{Note: }If we impose the cycle condition on $p_x$ instead of $p_y$, we get of course the dessins with black and white vertices interchanged, and if we impose it on $p_z$, it is obvious that the series $D_n$ becomes the \textit{daisy chain} series $D'_n$ (the below image showing $D'_4$ as an example), and the series $E_n,\, F_n$ and $G_n$ become the dessins of the Chebyshev-polynomials (see Example \ref{bsp:star-cheb}). The fact that they are defined over $\QQ$ does not change, of course.
%\vspace{5mm}
%
%\begin{center}
% \includegraphics[scale=1.15]{Graphics/cylinder-lemma-daisy.pdf}
%\end{center}
%\vspace{5mm}

\begin{proof}
The idea of the proof is to take cells of length 1 and 2 (this is the condition $p_z^2(i)=i$), bounded by edges, and glue them (preserving the orientation) around a white vertex $w$ until the cycle around this vertex is finished, i.e.\ until there are no more un-glued edges ending in $w$. The building blocks for this procedure are shown as 1a and 2a in the figure below. Note that it is not a priori clear that we end up with a closed surface in this process, but the proof will show that this is inevitable.

As a first step we consider in which ways we can identify edges or vertices within a cell of length 1 or 2, meeting the requirements that no black vertex shall have a valence $>2$ (this is the condition $p_x^2(i)=i$) and that the gluing respects the colouring of the vertices. The reader is invited to check that Figure \ref{fig:cyl-lemma-cells} lists all the possible ways of doing this.
\begin{figure}[h]
\begin{center}
 \includegraphics[scale=0.8]{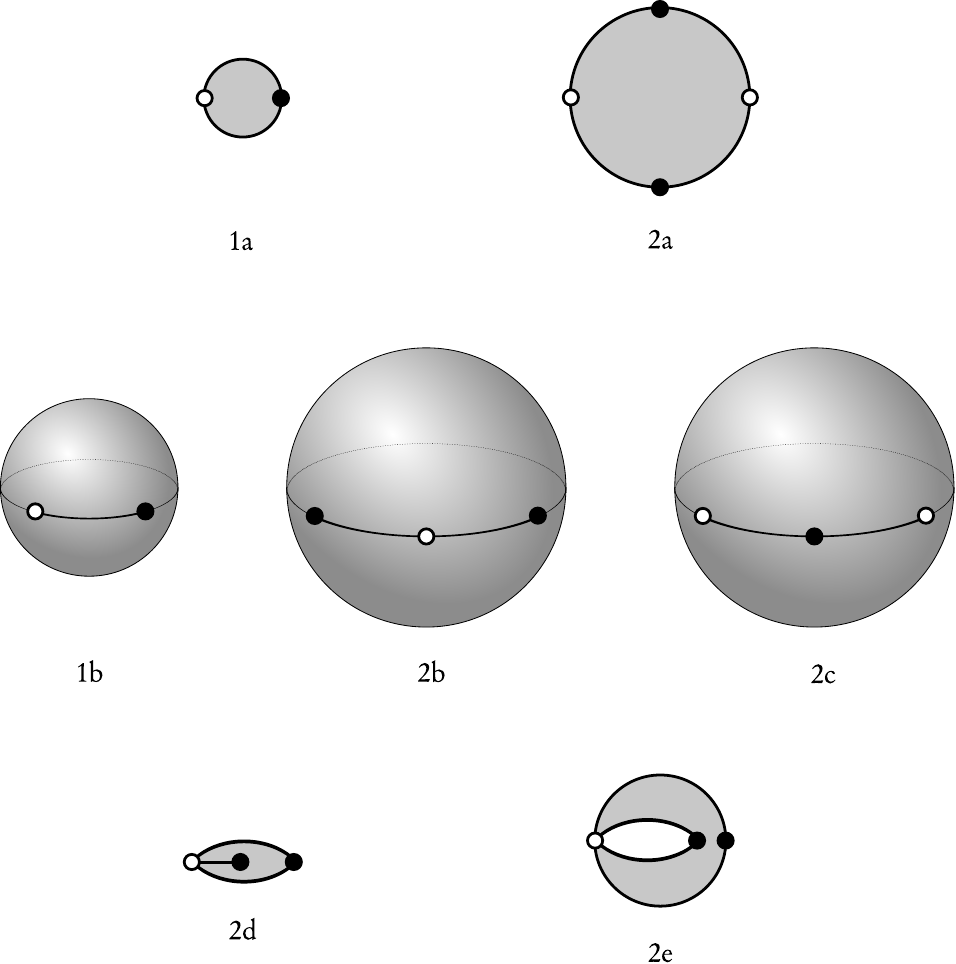}
\caption{}\label{fig:cyl-lemma-cells}
\end{center}
\end{figure}
Now we want to see in which ways we can glue these seven building blocks around the white vertex $w$. In the cases 1b, 2b and 2c we already have a closed surface, so we cannot attach any more cells, and end up with the dessins A, B and C. If we start by attaching the cell 2a to $w$, we can only attach other cells of that type to $w$. Note that we cannot attach anything else to the other white vertex, as this would force us to attach a cell of length $>2$ to $w$ in order to finish the cycle. So, in this case, we get the dessins of series D. Now consider the cell 2e. It has 4 bounding edges. To each of its two sides, we can attach another copy of 2e, leaving the number of bounding edges of the resulting object invariant, or a copy of 1a or 2d, both choices diminishing the number of boundary edges by $2$. Having a chain of 2e's, we cannot glue their two boundary components, as this would not yield a topological surface but rather a sphere with two points identified. So to close the cycle around the white vertex $w$, we have to attach on either side either 1a or 2d. This way we get the series E, F and G. If, on the other hand, we start with a cell of types 1a or 2d, we note that we can only attach cells of type 1a, 2d or 2e, so we get nothing new.

It might be surprising at first that constructing one cycle of $p_y$ with the given properties already determines the whole dessin. Again, the reader is invited to list the possibilities we seem to have forgotten, and check that they are in fact already in our list.

Now, clearly all of the dessins of type $\text{A},\ldots,\text{E}$ are determined by their cycle structure and hence defined over $\QQ$.
\end{proof}

With this lemma, we are now able to calculate the decomposition of an M-Origami into maximal cylinders. For a maximal cylinder of width $w$ and height $h$, we write that it is of type $(w,h)$.

\begin{satz}\label{satz:cyl-decomposition}
Let $O_\beta$ be an M-Origami associated to a dessin $\beta$ which is given by a pair of permutations $(p_x,\,p_y)$. Then we have:
\begin{aenum}
\item If $(p_x,\,p_y)$ does not define one of the dessins listed in Lemma \ref{hilfssatz:cyl-lemma}, then, in the Strebel direction $\left(\begin{smallmatrix}1\\0\end{smallmatrix}\right)$, $O_\beta$ has:
    \begin{itemize}
        \item for each fixed point of $p_y$ one maximal cylinder of type $(2,2)$,
        \item for each cycle of length $2$ of $p_y$ one maximal cylinder of type $(4,2)$,
        \item for each cycle of length $l>2$ of $p_y$ two maximal cylinders of type $(2l,1)$.
    \end{itemize}
\item In particular, we have in this case:
\[\#\,\text{max. horizontal cylinders}=2\cdot\#\,\text{cycles in }p_y-\#\,\text{fixed points of }p_y^2.\]
\item We get the maximal cylinders of $O_\beta$ in the Strebel directions $\left(\begin{smallmatrix}0\\1\end{smallmatrix}\right)$ and $\left(\begin{smallmatrix}1\\1\end{smallmatrix}\right)$ if we replace in a) the pair $(p_x,\,p_y)$ by $(p_y,\,p_x)$ and $(p_y,\,p_z)$, respectively.
\end{aenum}
\end{satz}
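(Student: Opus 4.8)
The plan is to deduce everything from the explicit monodromy of $O_\beta$ in Theorem \ref{satz:moncalc}. By Proposition \ref{prop:ori-strebel-cylinders} d) (with $g=I$) the maximal cylinders of $O_\beta$ in the direction $\left(\begin{smallmatrix}1\\0\end{smallmatrix}\right)$ are the maximal horizontal cylinders of the square-tiled surface $O_\beta$, so all of the data is encoded by $\sigma_h\coloneqq m_{[2]\circ\pi}(A)$ and $\sigma_v\coloneqq m_{[2]\circ\pi}(B)$ acting on the $4d$ squares $(i,j)$, $i\in\{1,2,3,4\}$, $j\in\{1,\dots,d\}$, with $A$ the horizontal generator of $\pi_1(E\setminus\{\infty\})$. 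From the formula for $m_{[2]\circ\pi}(A)$ I would first read off the horizontal \emph{ribbons}: for each cycle $c$ of $p_y$ of length $\ell$, the sets $R_1^{(c)}=\{(1,j),(2,j):j\in c\}$ and $R_2^{(c)}=\{(3,j),(4,j):j\in c\}$ are single $\sigma_h$-orbits of length $2\ell$, and as $c$ ranges over the cycles of $p_y$ they partition all $4d$ squares. From the formula for $m_{[2]\circ\pi}(B)$ one checks that $R_2^{(c)}$ lies directly above $R_1^{(c)}$ (respecting the cyclic order), while immediately above $R_2^{(c)}$ one meets squares of the form $(1,p_x^{-1}(j)),(2,p_x(j))$ and immediately below $R_1^{(c)}$ squares $(3,p_x(j)),(4,p_x^{-1}(j))$, which in general lie in ribbons attached to other cycles of $p_y$.

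The heart of the argument is to locate the cone points of the flat structure — the ramification points of $[2]\circ\pi$ over $\infty$ — which sit at the vertices of the tiling. By Lemma \ref{hilfssatz:mon-around-weierstrass} and Proposition \ref{satz:fprod}, the monodromy of $\pi$ around the preimages of the Weierstraß points $0,1,\infty$ of $E$ is $p_x^2$, $p_y^2$, $p_z^2$ respectively (and trivial over $\lambda$), so the cone angle at such a vertex is $2\pi$ times the length of the corresponding cycle of $p_x^2$, $p_y^2$ or $p_z^2$. Walking around the vertices with $\sigma_h,\sigma_v$ one identifies which vertex carries which point: the vertices on the horizontal segment separating $R_1^{(c)}$ from $R_2^{(c)}$ carry the points over $1\in E$ attached to $c$ (together with regular points, namely the preimages of $\lambda$), so this segment contains a genuine cone point iff $p_y^2|_c\neq\id$, i.e.\ iff $\ell>2$; and the outer horizontal boundary of $R_1^{(c)}$ (resp.\ of $R_2^{(c)}$) carries the points over $0$ and $\infty\in E$ attached to $c$, hence contains a genuine cone point iff $p_x^2|_c\neq\id$ or $p_z^2|_c\neq\id$. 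Consequently, if $\ell\le 2$ then $R_1^{(c)}\cup R_2^{(c)}$ is a single cylinder of circumference $2\ell$ and height $2$ (type $(2,2)$ for a fixed point, $(4,2)$ for a $2$-cycle), whereas if $\ell>2$ the ribbons $R_1^{(c)}$ and $R_2^{(c)}$ are two cylinders of type $(2\ell,1)$ — once each of these is shown to be maximal.

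Maximality is precisely where Lemma \ref{hilfssatz:cyl-lemma} enters: by the previous paragraph the only way such a cylinder can fail to be maximal is that one of its outer horizontal boundaries is cone-point-free, which happens exactly when $p_x^2|_c=\id$ and $p_z^2|_c=\id$; but then Lemma \ref{hilfssatz:cyl-lemma}, applied to that cycle $c$ of $p_y$, forces $\beta$ to be one of the dessins of Figure \ref{fig:cyl-lemma}, contrary to the hypothesis of (a). Hence under that hypothesis all the cylinders produced above are maximal, proving (a). Part (b) is then immediate by counting the cycles of $p_y$ according to their length, and part (c) follows from (a) by rotation: by Proposition \ref{prop:ori-strebel-cylinders} d), Corollary \ref{kor:sl2-action} and the fact ($-I\in\Gamma(O_\beta)$, Proposition \ref{prop:STaction}) the directions $\left(\begin{smallmatrix}0\\1\end{smallmatrix}\right)$ and $\left(\begin{smallmatrix}1\\1\end{smallmatrix}\right)$ correspond to the M-Origamis of $(p_y,p_x)$ and $(p_y,p_z)$, whose horizontal decompositions are described by (a).

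The step I expect to be the main obstacle is the bookkeeping in the second paragraph: carrying out the loops around the various types of vertices of the tiling correctly, and matching each boundary segment of a ribbon with the correct cycle of $p_x^2$, $p_y^2$ or $p_z^2$, so as to be certain that no cylinder secretly extends further than claimed. It is exactly the exceptional configuration in which every relevant boundary is cone-point-free that has to be excluded, and recognising that this configuration forces $\beta$ into the explicit list of Figure \ref{fig:cyl-lemma} is the real work hidden in Lemma \ref{hilfssatz:cyl-lemma}.
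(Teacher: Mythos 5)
Your proposal follows essentially the same route as the paper's proof: decompose $O_\beta$ into the two horizontal ribbons of rows $(1,2)$ and $(3,4)$ attached to each cycle of $p_y$, locate the cone points via the monodromy of $\pi$ over the Weierstraß points ($p_x^2$, $p_y^2$, trivial, $p_z^2$), conclude that the middle boundary is singular iff $l>2$ and the outer boundaries are singular unless the condition of Lemma \ref{hilfssatz:cyl-lemma} holds, and obtain c) from Proposition \ref{prop:ori-strebel-cylinders} d) together with the table in Corollary \ref{kor:sl2-action}. The bookkeeping you flag as the main obstacle is exactly what the paper handles (implicitly) via Lemma \ref{hilfssatz:mon-around-weierstrass} and Theorem \ref{satz:moncalc}, so the argument is correct and matches the paper's.
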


\begin{proof}%[of Theorem \ref{satz:cyl-decomposition}]
\begin{aenum}
\item First we note that for each cycle $c$ of $p_y$ of length $l$, we get two horizontal cylinders of length $2l$, one consisting of squares labelled $(1,j)$ and $(2,j)$, and one consisting of squares labelled $(3,j)$ and $(4,j)$---the latter rather belonging to the corresponding inverse cycle in $p_y^{-1}$. They are maximal iff there lie ramification points on both their boundaries (except in the trivial case where $g(O_\beta)=1$). Since the map $\pi$ from the construction of an M-Origami is always unramified over $\lambda$ (see Theorem \ref{satz:moncalc}), there are ramification points on the boundary in the middle of such a pair of cylinders iff the corresponding cycle is not self inverse, i.e.\ $l>2$. On the other two boundary components, there are ramification points iff not for every entry $j$ appearing in $c$, we have $p_x^2(j)=p_z^2(j)=1$. This is due to Lemma \ref{hilfssatz:mon-around-weierstrass}, and it is exactly the condition in Lemma \ref{hilfssatz:cyl-lemma}.
\item is a direct consequence of a).
\item By Proposition \ref{prop:ori-strebel-cylinders} d) we know that the cylinders in vertical and diagonal direction are the horizontal ones of $S^{-1}\cdot O_\beta$ and $(TS)^{-1}\cdot O_\beta$. As we know that $S\equiv S^{-1}$ and $(TS)^{-1}\equiv ST\:(\text{mod }\Gamma(2))$ we read off the claim from the table in Corollary \ref{kor:sl2-action}.
\end{aenum}
\end{proof}

For the sake of completeness, let us list the maximal horizontal cylinders of the M-Origamis associated to the exceptional dessins of Lemma \ref{hilfssatz:cyl-lemma}. We omit the easy calculations.
\begin{bem}
    \begin{aenum}
        \item The M-Origami coming from $D_n$ has two maximal horizontal cylinders of type $(2n,2)$ for $n\geq 3$ (and else one of type $(2n,4)$).
        \item The M-Origami coming from $E_n$ has one maximal horizontal cylinder of type $(4n,2)$.
        \item The M-Origami coming from $F_n$ has one maximal horizontal cylinder of type $(4n-2,2)$.
        \item The M-Origami coming from $G_n$ has one maximal horizontal cylinder of type $(4n-4,2)$.
    \end{aenum}
\end{bem}

\subsection{Möller's theorem and variations}

We are now able to reprove Möller's main result in \cite{mm1} in an almost purely topological way.  This will allow us to construct  explicit examples of origamis such that $\absGal$ acts non-trivially on the corresponding origami curves, which was not obviously possible in the original setting. 

Let us reformulate Theorem 5.4 from \cite{mm1}:
\begin{satz}[M.\ Möller]\label{msatz}
\begin{aenum}
\item Let $\sigma\in\absGal$ be an element of the absolute Galois group, and $\beta$ be a Belyi morphism corresponding to a clean tree, i.e.\ a dessin of genus $0$, totally ramified over $\infty$, such that all the preimages of $1$ are ramification points of order precisely $2$, and assume that $\beta$ is not fixed by $\sigma$. Then we also have for the origami curve $C(O_\beta)$ of the associated M-Origami: $C(O_\beta)\neq C(O_\beta)^\sigma$ (as subvarieties of $M_{g,[n]}$).
\item In particular, the action of $\absGal$ on the set of all origami curves is faithful.
\end{aenum}
\end{satz}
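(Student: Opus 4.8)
The plan is to deduce the faithfulness statement (b) from the more precise statement (a), and to prove (a) by transporting non-isomorphism of dessins through the M-Origami construction, controlling the passage from origamis to their Teichmüller curves with the field-of-moduli estimate of Theorem \ref{satz:ori-moduli-degree}.

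First I would set up (b) granting (a). By Theorem \ref{satz:dessinaction}, $\absGal$ acts faithfully already on clean unicellular dessins in genus $0$, i.e.\ on clean trees. So given $\id\neq\sigma\in\absGal$, pick a clean tree $\beta$ with $\beta\ncong\beta^\sigma$. Part (a) then yields an origami $O_\beta$ whose Teichmüller curve $C(O_\beta)$ is moved by $\sigma$, and since by Corollary \ref{cor:ori-curve-action} the assignment $C(O)\mapsto C(O^\sigma)$ is the Galois action on origami curves and $C(O_\beta^\sigma)=C(O_\beta)^\sigma\cong\iota^\sigma$-image by Proposition \ref{prop:ori-curves-arithmetic}, $\sigma$ acts non-trivially on the set of origami curves. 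As $\sigma$ was arbitrary, the action is faithful.

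For (a) itself I would proceed in three steps. \emph{Step 1 (dessin level):} $\beta\ncong\beta^\sigma$ as dessins. Since a clean tree is in particular \emph{filthy}? — no; a clean tree has $p_y^2=1$, so it is pre-clean, not filthy — hence I cannot invoke the ``filthy'' half of Theorem \ref{satz:veechgroup} c) directly and must argue via the lemmas \ref{hilfssatz:beta-to-pi} and \ref{hilfssatz:deck-existence} instead. \emph{Step 2 (origami level):} Show $O_\beta\ncong O_{\beta^\sigma}$, and moreover $O_{\beta^\sigma}\cong(O_\beta)^\sigma$. The second isomorphism is functoriality of the fibre-product construction under $\sigma$: all the data ($E$, $[2]$, $h$, $\{0,1,\infty\}$) are defined over $\QQ$, so $(O_\beta)^\sigma$ is the M-Origami built from $\beta^\sigma$, i.e.\ $O_{\beta^\sigma}$. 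For $O_\beta\ncong O_{\beta^\sigma}$, suppose the contrary; by Lemma \ref{hilfssatz:deck-existence} the pullbacks $\pi,\pi^\sigma$ differ by a deck transformation $\varphi\in\Deck([2])$, which permutes the four Weierstraß points. Here is the subtle point: $\varphi$ need not fix $\lambda$, because $\beta$ is only pre-clean, not filthy. However, a clean tree has $p_y^2 = 1$ but is unramified over $\lambda$ and totally ramified over $\infty$ with $p_z=p_x^{-1}p_y^{-1}$ a single cycle; one checks that the ramification data of $\pi$ over $0,1,\lambda,\infty$ are pairwise distinct as multisets (the $\lambda$-fibre is unramified, the $\infty$-fibre has a long cycle of length $d$, etc.), so the only deck transformation compatible with $\pi\cong\varphi\circ\pi^\sigma$ must fix each Weierstraß point, hence $\varphi=\id$, contradicting $\pi\ncong\pi^\sigma$ (Lemma \ref{hilfssatz:beta-to-pi}). \emph{Step 3 (curve level):} Deduce $C(O_\beta)\neq C(O_\beta)^\sigma$. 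By Theorem \ref{satz:ori-moduli-degree}, $[M(O_\beta):M(C(O_\beta))]\leq[\SL_2(\ZZ):\Gamma(O_\beta)]$, and for a clean tree $O_\beta$ one computes via Theorem \ref{satz:veechgroup} that $\Gamma(O_\beta)=\Gamma(2)$ provided $W_\beta=\{\id\}$ — which holds here since $\beta\ncong\beta^\sigma$ forces in particular $\beta$ to have trivial weak automorphism group once one also rules out coincidences under $s,t$ using that $\beta$ is clean. Thus $[\SL_2(\ZZ):\Gamma(O_\beta)]=6$, so at most $6$ of the M-Origamis in the $\absGal$-orbit of $O_\beta$ share the same Teichmüller curve, whereas Step 2 produces $O_{\beta^\sigma}\ncong O_\beta$ in that orbit; combining this with a counting argument on the $\sigma$-orbit (as in the proof of Theorem \ref{satz:ori-moduli-degree}) forces $C(O_\beta)\neq C(O_\beta)^\sigma$ under the hypothesis that $M(\beta)$ is not too large — precisely, under the condition on the field of moduli of $\beta$ mentioned in the introduction.

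The main obstacle I anticipate is Step 3: translating non-isomorphism of origamis into non-isomorphism of Teichmüller curves is exactly the place where information can be lost, since up to $[\SL_2(\ZZ):\Gamma(O_\beta)]=6$ distinct origamis can have the same curve. The clean argument is to combine the index-$6$ bound with a careful bookkeeping of the $\sigma$-orbit sizes of $\beta$, $O_\beta$ and $C(O_\beta)$, using $[M(O_\beta):\QQ]=|O_\beta\cdot\absGal|$ and $[M(C(O_\beta)):\QQ]=|C(O_\beta)\cdot\absGal|$ from Remark \ref{bem:ori-moduli}; the divisibility $[M(O_\beta):M(C(O_\beta))]\mid 6$ then pins down when the curve can possibly be Galois-stable, and the hypothesis $\beta\ncong\beta^\sigma$ (together with the stated condition on $M(\beta)$) excludes it. A secondary technical point is making sure the deck-transformation argument in Step 2 genuinely forces $\varphi=\id$ for \emph{clean trees}; this rests on the asymmetry of the ramification profiles of $\pi$ over the four Weierstraß points, which I would verify directly from the monodromy formulas of Theorem \ref{satz:moncalc} and Lemma \ref{hilfssatz:mon-around-weierstrass}.
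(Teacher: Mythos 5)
Your reduction of (b) to (a) and your origami-level step are essentially the paper's: the paper packages your Step 2 as Proposition \ref{prop:distinct-origamis} (via Lemmas \ref{hilfssatz:beta-to-pi} and \ref{hilfssatz:deck-existence}), showing for a clean tree that $p_z^2$ (two cycles of length $d/2$, $d$ even) has a cycle structure distinct from $p_x^2$ (at least $d/2+1\geq 3$ cycles), from $p_y^2=1$ and from the trivial permutation over $\lambda$, so a non-trivial deck transformation of $[2]$, being fixed-point free on the Weierstraß points, is excluded. Note that your stronger claim that the ramification profiles over $0,1,\lambda,\infty$ are \emph{pairwise} distinct is false for clean dessins (the fibres over $1$ and over $\lambda$ are both unramified, since $p_y^2=1$); only the weaker condition that \emph{one} profile differs from the other three is available, but that is exactly what is needed.

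The genuine gap is your Step 3. The theorem asserts $C(O_\beta)\neq C(O_\beta)^\sigma$ \emph{unconditionally} for clean trees, whereas your argument via Theorem \ref{satz:ori-moduli-degree} and moduli-field counting only yields a conditional statement ``under the stated condition on the field of moduli of $\beta$'' --- a hypothesis that is not part of the theorem, and the index-$6$ bound by itself does not rule out that $O_\beta$ and $O_{\beta^\sigma}$ are affinely equivalent (hence have the same curve). The paper closes this hole directly and without any moduli-field input: by Proposition \ref{prop:aff-eq-origamis}, $C(O_\beta)=C(O_{\beta^\sigma})$ would mean $O_\beta$ and $O_{\beta^\sigma}$ are affinely equivalent, which by Theorem \ref{satz:veechgroup} b) and Corollary \ref{kor:sl2-action} would mean $\beta$ and $\beta^\sigma$ are \emph{weakly} isomorphic; but for a clean tree the permutations $p_x$, $p_y$, $p_z$ have $\tfrac{d}{2}+1$, $\tfrac{d}{2}$ and $1$ cycles respectively, all distinct, so any weak isomorphism must respect the three branch points and hence be a genuine isomorphism, contradicting $\beta\ncong\beta^\sigma$. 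Then $C(O_\beta)\neq C(O_{\beta^\sigma})=C((O_\beta)^\sigma)=(C(O_\beta))^\sigma$ by Proposition \ref{prop:ori-curves-arithmetic} b). Replacing your Step 3 by this weak-isomorphism count is what makes the proof go through as stated; the moduli-field condition you invoke is only needed for the more general class treated in Theorem \ref{satz:treefilth} b), where the cycle-count argument is unavailable.
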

We will gather some lemmas which will enable us to reprove the above theorem within the scope of this work and to prove similar statements to part a) for other classes of dessins. Let us begin with the following simple

\begin{hilfssatz}\label{hilfssatz:beta-to-pi} Let $\beta\ncong\beta'$ be two dessins, defined by $(p_x,\,p_y)$ and $(p'_x,\,p'_y)$, respectively. Then, for their pullbacks $\pi, \pi'$ as in Definition \ref{defi:m-ori} we also have $\pi\ncong\pi'$.
\end{hilfssatz}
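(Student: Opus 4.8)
The plan is to prove the contrapositive: assuming the pullbacks $\pi$ and $\pi'$ are isomorphic as coverings of $E$ (equivalently, as the morphisms of curves occurring in Definition \ref{defi:m-ori}), I will produce an isomorphism $\beta\cong\beta'$. The engine of the argument is that the monodromy of $\pi$ over the punctured base $E\setminus\{0,1,\lambda,\infty\}$ has \emph{already} been computed, inside the proof of Theorem \ref{satz:moncalc}: with respect to the free generators $a,b,c,d,e$ of $\pi_1(E\setminus\{0,1,\lambda,\infty\})\cong F_5$ fixed there, one has $m_\pi(a)=p_y$, $m_\pi(b)=p_x^{-1}$, $m_\pi(c)=1$, $m_\pi(d)=p_x$, $m_\pi(e)=p_y^{-1}$, and likewise $m_{\pi'}(a)=p_y'$, $m_{\pi'}(d)=p_x'$, and so on (this is exactly the part of that proof that, as remarked in the footnote to Theorem \ref{satz:veechgroup}, is logically self-contained).

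First I would clear away the trivial case. Since the desingularisation is birational, $\deg\pi=\deg\tilde\pi=\deg\beta$, so if $\deg\beta\neq\deg\beta'$ then $\pi$ and $\pi'$ already have different degrees and there is nothing to prove; hence assume both have degree $d$. Next, any fibre-preserving isomorphism $X\to X'$ over $E$ is in particular a homeomorphism commuting with $\pi$ and $\pi'$, so it carries $\pi^{-1}(E\setminus\{0,1,\lambda,\infty\})$ onto $(\pi')^{-1}(E\setminus\{0,1,\lambda,\infty\})$ and restricts to an isomorphism between the two unramified degree-$d$ coverings of $E\setminus\{0,1,\lambda,\infty\}$. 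By Proposition \ref{prop:coverings} their monodromies must then be conjugate in $S_d$ by a single permutation $\rho$: we have $\rho\,m_\pi(g)\,\rho^{-1}=m_{\pi'}(g)$ for all $g\in\pi_1(E\setminus\{0,1,\lambda,\infty\})$.

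Evaluating this simultaneous conjugation at the generators $a$ and $d$ gives $\rho p_y\rho^{-1}=p_y'$ and $\rho p_x\rho^{-1}=p_x'$; that is, the pairs $(p_x,p_y)$ and $(p_x',p_y')$ are simultaneously conjugate in $S_d$. By the equivalence of Proposition \ref{prop:dessin-equiv}(c) this means exactly $\beta\cong\beta'$, contrary to hypothesis; therefore $\pi\ncong\pi'$.

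I do not anticipate a genuine obstacle. The only point worth a careful sentence is the reduction from an isomorphism of the (normalised fibre-product) curves over $E$ to an isomorphism of the associated unramified coverings over the punctured base — but this is immediate because the isomorphism commutes with the covering maps. Everything else is bookkeeping with the monodromy formula already available from the proof of Theorem \ref{satz:moncalc} and the standard dictionary between dessins and conjugacy classes of pairs of permutations.
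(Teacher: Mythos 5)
Your argument is correct and is essentially the paper's own proof: after dismissing the case of unequal degrees, one assumes $\pi\cong\pi'$, restricts to the punctured base to get simultaneous conjugacy of the monodromies, and evaluates at the generators $a$ and $d$ (whose images are $p_y$ and $p_x$ by the computation in the proof of Theorem \ref{satz:moncalc}) to contradict $\beta\ncong\beta'$. Your extra sentence making explicit the restriction of the isomorphism to the unramified coverings over $E\setminus\{0,1,\lambda,\infty\}$ is a harmless elaboration of a step the paper leaves implicit.
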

\begin{proof}
Let $\deg(\beta)=\deg(\beta')=d$ (if their degrees differ, the statement is trivially true). Now assume $\pi\cong\pi'$. This would imply the existence of a permutation $\alpha\in S_d$ such that $c_\alpha\circ m_\pi=m_{\pi'}$, where $c_\alpha$ is the conjugation by $\alpha$. By the proof of Theorem~\ref{satz:moncalc}, we have $m_\pi(d)=p_x,\,m_\pi(a)=p_y$, so in particular $(c_\alpha(p_x),\,c_\alpha(p_y))=(p_x',\,p_y')$, which contradicts the assertion that $\beta\ncong\beta'$.
\end{proof}
Next, let us check what happens after postcomposing $[2]$, the multiplication by $2$ on the elliptic curve. But we first need the following

\begin{hilfssatz}\label{hilfssatz:deck-existence}
Let $\pi\colon X\to E,\,\pi'\colon X'\to E$ be two coverings. If we have $[2]\circ\pi\cong[2]\circ\pi'$, then there is a deck transformation $\varphi\in \text{Deck}([2])$ such that $\varphi\circ\pi\cong \pi'$.
\end{hilfssatz}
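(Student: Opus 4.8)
The statement is essentially an instance of Lemma \ref{hilfssatz:decktrafo}. The plan is to set up the three maps $\pi$, $\pi'$, $[2]$ as coverings in the sense of that lemma, check its hypotheses, and read off the conclusion.

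First I would pass to the unramified picture. Write $E^{\ast\ast}\coloneqq E\setminus E[2]$ for the torus with the four Weierstraß points removed, and recall $E^\ast=E\setminus\{\infty\}$. Then $[2]$ restricts to an \emph{unramified} covering $[2]\colon E^{\ast\ast}\to E^\ast$ of degree $4$, and since $([2]\circ\pi)^{-1}(\infty)=\pi^{-1}(E[2])$, the maps $\pi,\pi'$ restrict to connected unramified coverings $X^\ast\to E^{\ast\ast}$ and $X'^\ast\to E^{\ast\ast}$, where $X^\ast=\pi^{-1}(E^{\ast\ast})$ and $X'^\ast=\pi'^{-1}(E^{\ast\ast})$. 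A homeomorphism $\varphi$ realising $[2]\circ\pi\cong[2]\circ\pi'$ restricts to a homeomorphism $\varphi\colon X^\ast\to X'^\ast$ with $[2]\circ\pi=[2]\circ\pi'\circ\varphi$.

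Next I would observe that $[2]\colon E^{\ast\ast}\to E^\ast$ is a \emph{normal} covering: its deck transformations are exactly the translations $z\mapsto z+t$ by $t\in\ker[2]=E[2]$, and these act simply transitively on every fibre, so $[2]$ is Galois with $\Deck([2])\cong E[2]\cong(\ZZ/2\ZZ)^2$. Moreover $E^\ast$ is a Hausdorff coverable space, and $\pi,\pi'$ are connected coverings. Thus all hypotheses of Lemma \ref{hilfssatz:decktrafo} are met with $f\coloneqq\pi$, $f'\coloneqq\pi'$, $g\coloneqq[2]$ and $Z\coloneqq E^\ast$, and the lemma yields a deck transformation $\psi\in\Deck([2])$ with $\psi\circ\pi=\pi'\circ\varphi$; that is, $\varphi$ is an isomorphism of coverings $\psi\circ\pi\xrightarrow{\ \sim\ }\pi'$ over $E^{\ast\ast}$, so $\psi\circ\pi\cong\pi'$ as claimed (with $\psi$ playing the role of the asserted deck transformation). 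Finally, since $\psi$ is a translation by a $2$-torsion point it extends to a deck transformation of $[2]\colon E\to E$ on the compact curves, and the identity $\psi\circ\pi=\pi'\circ\varphi$ extends by continuity from the dense open subset $X^\ast$, so the statement also holds in the compact formulation. The only real work is bookkeeping: confirming that $[2]$ is normal and correctly matching the punctured and compact versions of the spaces; no genuine obstacle arises beyond this.
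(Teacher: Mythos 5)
Your proposal is correct and follows exactly the paper's route: the paper's own proof is the one-liner that $E$ is Hausdorff and $[2]$ is a normal covering, so Lemma \ref{hilfssatz:decktrafo} applies. Your extra bookkeeping — restricting to the punctured surfaces $X^\ast\to E^{\ast\ast}\to E^\ast$ where everything is an honest unramified covering, checking $\varphi(X^\ast)=X'^\ast$, and extending $\psi\circ\pi=\pi'\circ\varphi$ back to the compact curves by density — is a welcome explicit justification of details the paper leaves implicit, but it is the same argument.
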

\begin{proof} 
Clearly, $E$ is Hausdorff, and $[2]$ is a normal covering, so we can apply Lemma \ref{hilfssatz:decktrafo}.
\end{proof}
\begin{prop}\label{prop:distinct-origamis}
Assume we have a dessin $\beta$ given by $(p_x,\,p_y)$, and a Galois automorphism $\sigma\in\absGal$ such that $\beta\ncong\beta^\sigma$. If furthermore the 4-tuple $(p_x^2,\,p_y^2,\,p_z^2,\,1)\in (S_d)^4$ contains one permutation with cycle structure distinct from the others, then we have $O_\beta\ncong (O_{\beta})^\sigma$.
\end{prop}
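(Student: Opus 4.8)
The plan is to derive a contradiction from the two lemmas above. The first point to record is that $(O_\beta)^\sigma=O_{\beta^\sigma}$: every ingredient of the construction in Definition \ref{defi:m-ori} -- the elliptic curve $E$, the isogeny $[2]$, the quotient map $h$, and the passage from the fibre product to its normalisation -- is defined over $\QQ$, so applying $\sigma$ changes only the structure morphisms and carries $\beta$ to $\beta^\sigma$; the one thing to check here is that normalisation commutes with the twist by $\sigma$, which is immediate since $\sigma$ leaves the underlying scheme untouched. It therefore suffices to prove $O_\beta\ncong O_{\beta^\sigma}$. Write $\pi\colon X\to E$ and $\pi'\colon X'\to E$ for the normalised pullbacks of $\beta$ and of $\beta^\sigma$ by $h$, and suppose for contradiction that $O_\beta\cong O_{\beta^\sigma}$, i.e.\ $[2]\circ\pi\cong[2]\circ\pi'$. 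Then Lemma \ref{hilfssatz:deck-existence} supplies a deck transformation $\varphi\in\Deck([2])$ with $\varphi\circ\pi\cong\pi'$.

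The second step is the branch bookkeeping. Since $[2]$ is an isogeny with kernel $E[2]$, the group $\Deck([2])$ consists precisely of the translations by the four $2$-torsion points, and it acts \emph{freely} on the set $W=\{0,1,\lambda,\infty\}$ of Weierstraß points: a translation fixing a $2$-torsion point is the identity. On the covering side, Proposition \ref{satz:fprod}(a) identifies the monodromy of $\pi$ with $m_\beta\circ h_*$, so by Lemma \ref{hilfssatz:mon-around-weierstrass} the map $\pi$ is branched over $W$ with local monodromies $p_x^2,\,p_y^2,\,p_z^2$ over $0,\,1,\,\infty$ and the identity over $\lambda$. Likewise, if $q_x,q_y,q_z$ denote the monodromy of $\beta^\sigma$ over $0,1,\infty$, then $\pi'$ is branched over $W$ with local monodromies $q_x^2,q_y^2,q_z^2$ and the identity. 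As Galois conjugation preserves the ramification profile of a Belyi morphism, $q_x,q_y,q_z$ have the same cycle types as $p_x,p_y,p_z$, hence $q_x^2,q_y^2,q_z^2$ have the same cycle types as $p_x^2,p_y^2,p_z^2$. Consequently $\pi$ and $\pi'$ define the \emph{same} function $\ell\colon W\to\{\text{partitions of }d\}$ recording the cycle type of the local monodromy, namely the cycle types of $p_x^2,p_y^2,p_z^2$ at $0,1,\infty$ and the trivial one at $\lambda$.

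The last step is the rigidity argument. Because $\varphi$ is a homeomorphism of $E$ that permutes $W$ and is a local homeomorphism, the local monodromy of $\varphi\circ\pi$ over a point $P\in W$ is that of $\pi$ over $\varphi^{-1}(P)$; combined with $\varphi\circ\pi\cong\pi'$, which forces local monodromies to agree point by point, this gives $\ell(\varphi^{-1}(P))=\ell(P)$ for every $P\in W$. By hypothesis one of $p_x^2,p_y^2,p_z^2,1$ has cycle type different from the other three, so there is a Weierstraß point $P_0$ with $\ell^{-1}(\ell(P_0))=\{P_0\}$; then $\varphi^{-1}(P_0)=P_0$, and since $\Deck([2])$ acts freely on $W$ this forces $\varphi=\id_E$. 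Hence $\pi\cong\pi'$, contradicting Lemma \ref{hilfssatz:beta-to-pi} since $\beta\ncong\beta^\sigma$. The two places where care is needed are the identification $(O_\beta)^\sigma=O_{\beta^\sigma}$ and the comparison between the branch data of $\varphi\circ\pi$ and that of $\pi$; the actual heart of the matter is the tension between the free action of $\Deck([2])$ on the four Weierstraß points and the fact that Galois conjugation cannot disturb the labelling $\ell$, which by assumption singles out one of those points.
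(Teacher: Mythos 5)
Your proposal is correct and follows essentially the same route as the paper's proof: identify $(O_\beta)^\sigma=O_{\beta^\sigma}$ using that $E$ and $[2]$ are defined over $\QQ$, invoke Lemma \ref{hilfssatz:deck-existence} to obtain $\varphi\in\Deck([2])$ with $\varphi\circ\pi\cong\pi^\sigma$, use Lemma \ref{hilfssatz:mon-around-weierstrass} together with the Galois invariance of ramification data and the distinguished entry of $(p_x^2,p_y^2,p_z^2,1)$ to force $\varphi=\id$ (the deck group acting by translations, hence freely), and conclude via Lemma \ref{hilfssatz:beta-to-pi}. You merely spell out more explicitly the bookkeeping of local monodromies over the Weierstraß points that the paper leaves terse.
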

\begin{proof}
First of all, what is $(O_\beta)^\sigma$? We chose $E$ to be defined over $\QQ$, so $E\cong E^\sigma$, and $[2]$ is also defined over $\QQ$, so $[2]=[2]^\sigma$, and so $([2]\circ\pi)^\sigma=[2]\circ\pi^\sigma$ which means by definition that $(O_\beta)^\sigma=O_{\beta^\sigma}$.

Assume now $O_\beta\cong O_{\beta^\sigma}$. So by the above lemma, there is a deck transformation $\varphi\in\Deck ([2])$ such that $\varphi\circ\pi \cong \pi^\sigma$. The deck transformation group here acts by translations, so in particular without fixed points. Note that by Lemma \ref{hilfssatz:mon-around-weierstrass}, the tuple $(p_x^2,\,p_y^2,\,p_z^2,\,1)$ describes the ramification of $\pi$ at the Weierstrass points, and the ramification behaviour of $\pi^\sigma$ is the same. So, imposing the condition that one entry in this tuple shall have a cycle structure distinct from the others, it follows that $\varphi=\id$. So we have even $\pi\cong\pi^\sigma$, and so by Lemma \ref{hilfssatz:beta-to-pi} $\beta\cong\beta^\sigma$, which contradicts the assumption.
\end{proof}

Now, we have all the tools together to prove Theorem \ref{msatz}.

\begin{proof}[Proof of Theorem \ref{msatz}]
The second claim follows from the first, as the action of $\absGal$ is faithful on trees, and it stays faithful if we restrict to clean ones, as we noted in Theorem \ref{satz:dessinaction}.

So, choose a non-trivial Galois automorphism $\sigma$ and a clean tree $\beta$ of degree $d$ defined by $(p_x,\,p_y)$ such that $\beta\ncong\beta^\sigma$.

First we check the condition of Proposition \ref{prop:distinct-origamis} by showing that the cycle structure of $p_z^2$ is distinct from the others. $p_z^2$ consists of two cycles because purity implies even parity of $d$. As $\beta\ncong\beta^\sigma$, surely $d>2$, so $p_z^2\neq 1$, and so it is distinct from $p_y^2=1$. Because of the purity, the dessin $\beta$ has $\tfrac{d}{2}$ white vertices, and so $\tfrac{d}{2}+1$ black vertices, which is a lower bound for the number of cycles in $p_x^2$. Again, from $d>2$ we conclude that $p_x^2$ must consist of at least $3$ cycles and therefore cannot be conjugate to $p_z^2$.

We claim now that $O_\beta$ and $O_{\beta^\sigma}$ are not affinely equivalent. By Theorem \ref{satz:veechgroup} and Corollary \ref{kor:sl2-action} this amounts to checking that $\beta$ and $\beta^\sigma$ are not weakly isomorphic. As we will see later in an example, this cannot be assumed in general, but in this case $p_x,\,p_y,\,p_z$ consist of $\tfrac{d}{2}+1$ cycles, $\tfrac{d}{2}$ cycles and $1$ cycle, respectively, so any weak isomorphism would actually have to be an isomorphism, which we excluded.

So, by Proposition \ref{prop:aff-eq-origamis} , $C(O_\beta)\neq C(O_{\beta^\sigma})$ as embedded curves in the moduli space. But as we saw in the proof of Proposition \ref{prop:distinct-origamis}, the latter is equal to $C((O_\beta)^\sigma)$, which is in turn equal to $(C(O_\beta))^\sigma$ by Proposition \ref{prop:ori-curves-arithmetic} b). Altogether we found, for an arbitrary $\sigma\in\absGal$, an origami $O$ such that
\[C(O)\neq(C(O))^\sigma.\]
So indeed, the absolute Galois group acts faithfully on the set of origami curves.
\end{proof}
Inspecting our results that we used to prove Möller's theorem more closely, we see that we can actually use them to give a larger class of dessins $\beta$  for which we know that from $\beta^\sigma\ncong\beta$ follows $C(O_\beta)\neq (C(O_\beta))^\sigma$:
\begin{satz}\label{satz:treefilth} Let $\beta$ be a dessin such that $\beta^\sigma\ncong\beta$ for some $\sigma\in\absGal$.
\begin{aenum}
\item If $\beta$ is a tree or a filthy dessin, then $O_\beta\ncong O_\beta^\sigma$.
\item If furthermore $M(\beta)=M_\beta$ (in the sense of Definitions \ref{defi:fields-morph} and \ref{defi:dessinmoduli}), then we have $C(O_\beta)\neq (C(O_\beta))^\sigma$.
\end{aenum}
\end{satz}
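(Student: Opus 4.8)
The plan is to derive both parts from the monodromy description of M-Origamis, using Proposition~\ref{prop:distinct-origamis} and the deck-transformation Lemma~\ref{hilfssatz:deck-existence} as the two main tools.

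For part a) I would first dispose of the filthy case, where it is immediate: by Definition and Remark~\ref{defbem:clean-dessins}~d), $\beta$ filthy means that none of $p_x^2,p_y^2,p_z^2$ is the identity, and since the identity is the only permutation with all cycles of length~$1$, the last entry of the $4$-tuple $(p_x^2,p_y^2,p_z^2,1)$ has a cycle structure distinct from the other three; Proposition~\ref{prop:distinct-origamis} then gives $O_\beta\ncong O_\beta^\sigma$. For the tree case one may assume $d=\deg\beta\geq 3$, since a dessin of degree $\leq 2$ is combinatorially rigid, hence defined over $\QQ$, hence fixed by $\sigma$, contrary to hypothesis. As $\beta$ is a tree, Definition and Remark~\ref{defbem:uni-dessins} shows $p_z$ is a single $d$-cycle, so $p_z^2$ has one or two cycles and in particular has a cycle structure distinct from that of the identity. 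A short count using the Euler relation (number of cycles of $p_x$ plus number of cycles of $p_y$ equals $d+1$) together with the fact that squaring a permutation never decreases the number of cycles then shows that some entry of $(p_x^2,p_y^2,p_z^2,1)$ has a cycle structure shared by none of the others; the only configurations in which this could fail force $\beta$ to be a star or one of the caterpillar-like dessins of Lemma~\ref{hilfssatz:cyl-lemma}, all of which are rigid and so defined over $\QQ$, again contradicting $\beta\ncong\beta^\sigma$. Proposition~\ref{prop:distinct-origamis} concludes part a).

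For part b) I would argue by contradiction, assuming $C(O_\beta)=(C(O_\beta))^\sigma$. Since $E$ and $[2]$ are defined over $\QQ$ we have $(O_\beta)^\sigma=O_{\beta^\sigma}$, and by Proposition~\ref{prop:ori-curves-arithmetic}~b) also $(C(O_\beta))^\sigma=C(O_{\beta^\sigma})$; hence $C(O_\beta)=C(O_{\beta^\sigma})$, so by Proposition~\ref{prop:aff-eq-origamis} the origamis $O_\beta$ and $O_{\beta^\sigma}$ are affinely equivalent. By Theorem~\ref{satz:veechgroup}~a),b) and Corollary~\ref{kor:sl2-action}, the $\SL_2(\ZZ)$-orbit of $O_\beta$ is contained in $\{O_{w\cdot\beta}\mid w\in W\}$, so $O_{\beta^\sigma}\cong O_{w\cdot\beta}$ for some $w\in W$, with $w\neq\id$ by part a). Now Lemma~\ref{hilfssatz:deck-existence} yields $\varphi\in\Deck([2])$ with $\pi_{\beta^\sigma}\cong\varphi\circ\pi_{w\cdot\beta}$. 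The deck group of $[2]$ consists of the translations by $2$-torsion points; such a translation commutes with the elliptic involution, so it descends along $h$ to an automorphism $\bar\varphi$ of $\PeC$, and one checks that $\Deck([2])$ embeds into $\Aut(\PeC)$ with image acting on the four branch points $\{0,1,\lambda,\infty\}$ of $h$ as the Klein four-group of double transpositions; moreover $\varphi\circ\pi_{w\cdot\beta}$ is again the desingularised pullback by $h$ of the pillow case covering $\bar\varphi\circ(w\cdot\beta)$. By Lemma~\ref{hilfssatz:mon-around-weierstrass} and Proposition~\ref{satz:fprod}, the monodromy of $\pi_{\beta^\sigma}$ around the Weierstraß points is $p_x^2$ over $0$, $p_y^2$ over $1$, $p_z^2$ over $\infty$ and trivial over $\lambda$, and (being determined by ramification indices) it is Galois invariant; comparing this with the analogous data for $\varphi\circ\pi_{w\cdot\beta}$, and using that for filthy $\beta$ none of $p_x^2,p_y^2,p_z^2$ is trivial, forces $\bar\varphi$ to fix $\lambda$, hence $\bar\varphi=\id$ and $\varphi=\id$. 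Thus $\pi_{\beta^\sigma}\cong\pi_{w\cdot\beta}$, so by Lemma~\ref{hilfssatz:beta-to-pi} $\beta^\sigma\cong w\cdot\beta$, i.e.\ $\beta^\sigma$ is weakly isomorphic to $\beta$. Finally I would unwind $M(\beta)=M_\beta$: for $\beta$ ramified over all of $\{0,1,\infty\}$ (automatic in the filthy case) the groups $U(\beta)$ and $U_\beta$ of Definitions~\ref{defi:fields-morph} and \ref{defi:dessinmoduli} consist respectively of the $\sigma$ for which $\beta^\sigma$ is weakly isomorphic to, resp.\ isomorphic to, $\beta$; equality of the fixed fields forces equality of these closed subgroups, so $\beta^\sigma$ weakly isomorphic to $\beta$ already implies $\beta^\sigma\cong\beta$, contradicting the hypothesis.

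It remains to treat part b) when $\beta$ is a tree but not filthy, i.e.\ weakly isomorphic to a pre-clean dessin. If $\beta$ is weakly isomorphic to a \emph{clean} tree $\beta_0$, then since origami curves are invariant under weak isomorphism (Proposition~\ref{prop:equivariant-action}) and $\beta_0^\sigma\ncong\beta_0$, Möller's Theorem~\ref{msatz}~a) applied to $\beta_0$ gives $C(O_{\beta_0})\neq(C(O_{\beta_0}))^\sigma$, whence $C(O_\beta)\neq(C(O_\beta))^\sigma$. The genuinely delicate case — and the one I expect to be the main obstacle — is that of a pre-clean but not clean tree: here $p_x,p_y,p_z$ have pairwise distinct numbers of cycles (using $d\geq 3$ and the Euler relation), so every weak isomorphism of $\beta$ is an isomorphism and $M(\beta)=M_\beta$ holds automatically, but the Weierstraß-monodromy comparison above no longer forces $\varphi=\id$: the one surviving possibility is $O_{\beta^\sigma}\cong O_{t\cdot\beta}$ realised by the deck transformation swapping $\lambda$ with the unique pre-clean Weierstraß point. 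Ruling this out requires comparing the full monodromy homomorphisms of $\pi_{\beta^\sigma}$ and $\pi_{\bar\varphi\circ(t\cdot\beta)}$ on the generators $a,b,c,d,e$ of $\pi_1(E\setminus\{0,1,\lambda,\infty\})$ from the proof of Theorem~\ref{satz:moncalc}, and checking that the two permutation representations cannot be conjugate — the obstruction being the incompatibility of the $d$-cycle over $\infty$ with the position of the pre-clean structure. All the remaining ingredients are already available from the results quoted above.
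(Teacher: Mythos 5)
Your part a) follows the paper's strategy (find an entry of $(p_x^2,p_y^2,p_z^2,1)$ with unique cycle structure and feed it into Proposition~\ref{prop:distinct-origamis}), and the filthy case is exactly the paper's argument. In the tree case, however, the decisive step is asserted rather than carried out, and your description of the exceptional configurations is wrong. The paper shows that the distinguished entry is $p_z^2$ (the square of a $d$-cycle, so one cycle for $d$ odd, two for $d$ even), and that $p_x^2$ or $p_y^2$ can share its cycle structure only if $\beta$ is the star $z\mapsto z^d$ or the ``double fan'' in which $p_x$ consists of two $\tfrac{d}{2}$-cycles, with Belyi polynomial $(-z^2+2z)^{d/2}$. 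That double fan is the case one must actually exclude, and it is \emph{not} ``a star or one of the dessins of Lemma~\ref{hilfssatz:cyl-lemma}'': the dessins of that lemma all have black vertices of valence at most $2$ and require $p_z^2(i)=i$ on a whole $p_y$-cycle, which is impossible for a tree of degree $>4$ since $p_z$ is a single $d$-cycle. So the reduction you invoke fails as stated; it can be repaired, because the double fan is also defined over $\QQ$, but identifying it (and the parity analysis of $p_z^2$) is precisely the content you have omitted.

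In part b) your route diverges from the paper's and does not close. The paper's argument is short and uniform: if $C(O_\beta)=(C(O_\beta))^\sigma$, then by Proposition~\ref{prop:ori-curves-arithmetic}~b) and Proposition~\ref{prop:aff-eq-origamis} the origamis $O_\beta$ and $O_{\beta^\sigma}$ are affinely equivalent; by Theorem~\ref{satz:veechgroup}~b) this means $\beta$ and $\beta^\sigma$ are weakly isomorphic; and the hypothesis $M(\beta)=M_\beta$ --- which you never use for trees --- is exactly what upgrades weak isomorphism to isomorphism, contradicting $\beta^\sigma\ncong\beta$. You instead rerun the deck-transformation analysis; this does settle the filthy case, but for a pre-clean, non-clean tree you concede yourself that the possibility $O_{\beta^\sigma}\cong O_{t\cdot\beta}$, realised by the deck transformation exchanging $\lambda$ with the unramified Weierstraß point, is not ruled out, and you only describe the monodromy comparison that would be needed. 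Since the theorem covers exactly this case, the proposal as written does not prove b). (Your worry does touch a point that the paper's appeal to Theorem~\ref{satz:veechgroup}~b) treats very briskly, namely that isomorphism of M-Origamis should force isomorphism of the underlying dessins; but noticing the subtlety is not resolving it, and the intended argument is the reduction through weak isomorphism combined with $M(\beta)=M_\beta$.) Finally, your case split for trees --- ``weakly isomorphic to a clean tree'' versus ``pre-clean but not clean'' --- is not exhaustive as phrased, and the detour through Möller's Theorem~\ref{msatz} is not needed once the $M(\beta)=M_\beta$ mechanism is in place.
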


\begin{proof}
For part a), remember that a dessin with monodromy given by $(p_x,\,p_y)$ is said to be filthy if it is not weakly isomorphic to a pre-clean one, i.e.\ $1\notin\{p_x^2,\,p_y^2,\,p_z^2\}$. So the condition of Proposition \ref{prop:distinct-origamis} is satisfied for the permutation $1$.

The case of $\beta$ being a tree is a little bit more tricky. We want to show that the cycle structure of $p_z^2$ is unique. $\beta$ is totally ramified over $\infty$, so $p_z^2$ has one cycle if $d=\deg(\beta)$ is odd and two if it is even. First assume it to be odd. If $p_x^2$ (and so $p_x$) also had only one cycle, then we had $\beta\colon  z\mapsto z^d$ which contradicts the assumption $\beta\ncong\beta^\sigma$. We repeat the argument for $p_y$. Clearly $d>1$, so $p_z^2\neq 1$.

Assume now $d\in 2\ZZ$, so $p_z^2$ has two cycles of length $\tfrac{d}{2}$. Assume $p_x^2$ to be conjugate to it, then $p_x$ has either one cycle of length $d$ or two of length $\tfrac{d}{2}$. We already discussed the first case, and in the latter one, there is, for every $d$, only one tree with that property:

\begin{center}
 \includegraphics[scale=0.8]{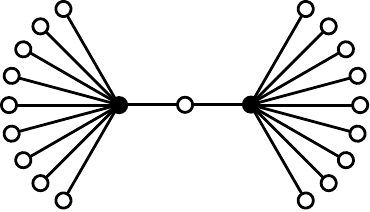}
\end{center}

So in particular it is defined over $\QQ$ (this can be seen by writing down its Belyi polynomial $\beta(z)=(-z^2+2z)^\frac{d}{2}$), which contradicts the hypothesis $\beta\ncong\beta^\sigma$. Again we repeat the argument for $p_y$, and as surely $d>2$ we have $p_z^2\neq 1$.

For part b), we have to check that $O_\beta$ and $O_\beta^\sigma$ are not affinely equivalent. Again by Theorem \ref{satz:veechgroup} b) this is equivalent to $\beta$ and $\beta^\sigma$ not being weakly isomorphic. But indeed, the conditon $M(\beta)=M_\beta$ implies that whenever $\beta$ and $\beta^\sigma$ are weakly isomorphic for some $\sigma\in\absGal$, we have actually $\beta\cong\beta^\sigma$.

\end{proof}

Note that Example \ref{bsp:smaller-curve-moduli} shows that the condition $M(\beta)=M_\beta$ is actually necessary: the Galois orbit of dessins considered in that example contains a pair of filthy trees which are weakly isomorphic. The corresponding M-Origamis are thus distinct by Theorem \ref{satz:treefilth} b), but in fact affinely equivalent.

From Corollary \ref{kor:finite} we know that in order to act faithfully on the Teichmüller curves of M-Origamis, the absolute Galois group must act non-trivially on the Teichmüller curves of M-Origamis of genus $g$ for infinitely many $g$. In fact, one can show that it acts non-trivially for each $g\geq 4$. This is done in \cite[Proposition 4.22]{diss}.

\section{Examples}

We will now illustrate the results of the previous section by some examples. First, we will construct two non-trivial Galois orbits of origami curves. To do so, we take two orbits of dessins from \cite{cat} and feed them into the M-Origami-machine. Then, we will amend them with two examples showing that every congruence subgroup in $\SL_2(\ZZ)$ of level $2$ actually appears as the Veech group of an M-Origami.
\begin{bsp}
Consider the following Galois orbit of dessins:
\vspace{5mm}

\begin{center}
 \includegraphics[scale=0.8]{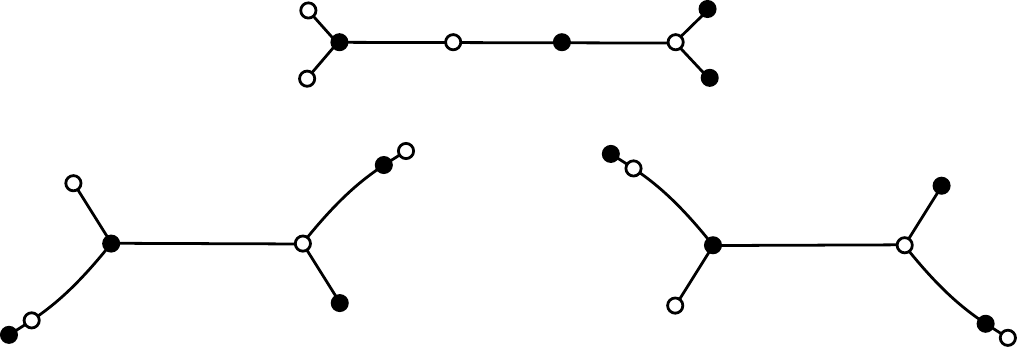}
\end{center}
\vspace{5mm}

The dessins are given by Belyi polynomials of the form
\[\beta(z)=z^3(z-a)^2\left(z^2+\left(2a-\frac{7}{2}\right)z+\frac{8}{5}a^2-\frac{28}{5}a+\frac{21}{5}\right),\]
where $a$ runs through the three complex roots of the polynomial
\[24 a^3-84 a^2 + 98 a -35,\]
one of which is real and the other two of which are complex conjugate.

We can number the edges in such a way that for all three we get $p_x=(1\,2\,3)(4\,5)$ and, from top to bottom right in the above picture, we get
\[p_y^{(1)}=(3\,4)(5\,6\,7),\,p_y^{(2)}=(2\,7)(3\,6\,4),\,p_y^{(3)}=(1\,7)(3\,4\,6).\]
First, we write down the monodromy of the corresponding M-Origamis, using Theorem \ref{satz:moncalc}: For all of them, we can choose $p_B$ to be
\begin{align*}
p_B &= (1\,3\,9\,11\,5\,7)(2\,4\,6\,8\,10\,12)(13\,15\,17\,19)(14\,16\,18\,20)\\
    &\quad\:\:(21\,23)(22\,24)(25\,27)(26\,28)
\end{align*}
and further we calculate
\begin{align*}
p_A^{(1)}&=(1\,2)(3\,4)(5\,6)(7\,8)(9\,10\,13\,14)(11\,12\,15\,16)\\
         &\quad\:\:(17\,18\,21\,22\,25\,26)(19\,20\,27\,28\,23\,24),\\
p_A^{(2)}&=(1\,2)(3\,4)(5\,6\,25\,26)(7\,8\,27\,28)(9\,10\,21\,22\,13\,14)\\
         &\quad\:\:(11\,12\,15\,16\,23\,24)(17\,18)(19\,20),\\
p_A^{(3)}&=(1\,2\,25\,26)(3\,4\,27\,28)(5\,6)(7\,8)(9\,10\,13\,14\,21\,22)\\
         &\quad\:\:(11\,12\,23\,24\,15\,16)(17\,18)(19\,20).
\end{align*}

Now let us draw the origamis—numbers in small print shall indicate the gluing. We can do that in a way that exhibits the mirror symmetry of the first one (which shows that the origami, and thus its curve, is defined over $\RR$), and the fact that the other two are mirror images of each other, i.e.\ they are interchanged by the complex conjugation.

\vspace{5mm}

\begin{center}
\begin{minipage}{10cm}
\begin{xy}
<0.6cm,0cm>:
(1,-1)*{\OriSquare{1}{}{}{2}{7}};
(2,-1)*{\OriSquare{2}{1}{}{}{}};
(1,0)*{\OriSquare{3}{}{9}{4}{}};
(2,0)*{\OriSquare{4}{3}{6}{}{}};
(5,-1)*{\OriSquare{5}{}{}{6}{}};
(6,-1)*{\OriSquare{6}{5}{}{}{4}};
(5,0)*{\OriSquare{7}{}{1}{8}{}};
(6,0)*{\OriSquare{8}{7}{10}{}{}};
(5,-3)*{\OriSquare{9}{10}{}{}{3}};
(2,-3)*{\OriSquare{10}{}{}{9}{8}};
(5,-2)*{\OriSquare{11}{12}{}{}{}};
(2,-2)*{\OriSquare{12}{}{}{11}{}};
(3,-3)*{\OriSquare{13}{}{}{}{}};
(4,-3)*{\OriSquare{14}{}{}{}{}};
(3,-2)*{\OriSquare{15}{}{17}{}{}};
(4,-2)*{\OriSquare{16}{}{18}{}{}};
(3,-5)*{\OriSquare{17}{}{}{}{15}};
(4,-5)*{\OriSquare{18}{}{}{}{16}};
(3,-4)*{\OriSquare{19}{}{}{24}{}};
(4,-4)*{\OriSquare{20}{27}{}{}{}};
(5,-5)*{\OriSquare{21}{}{23}{}{}};
(6,-5)*{\OriSquare{22}{25}{24}{}{}};
(5,-6)*{\OriSquare{23}{}{}{28}{21}};
(6,-6)*{\OriSquare{24}{19}{}{}{22}};
(1,-5)*{\OriSquare{25}{}{27}{22}{}};
(2,-5)*{\OriSquare{26}{}{28}{}{}};
(1,-6)*{\OriSquare{27}{}{}{20}{25}};
(2,-6)*{\OriSquare{28}{23}{}{}{26}};
\end{xy}
\end{minipage}
\end{center}
\vspace{5mm}

\begin{center}
\begin{minipage}{10cm}
\begin{xy}
<0.6cm,0cm>:
(1,-3)*{\OriSquare{1}{}{}{2}{7}};
(2,-3)*{\OriSquare{2}{1}{}{}{12}};
(1,-2)*{\OriSquare{3}{}{9}{4}{}};
(2,-2)*{\OriSquare{4}{3}{}{}{}};
(5,-1)*{\OriSquare{5}{6}{}{}{}};
(2,-1)*{\OriSquare{6}{}{}{5}{}};
(5,0)*{\OriSquare{7}{8}{1}{}{}};
(2,0)*{\OriSquare{8}{}{10}{7}{}};
(5,-3)*{\OriSquare{9}{}{}{14}{3}};
(6,-3)*{\OriSquare{10}{}{}{}{8}};
(5,-2)*{\OriSquare{11}{}{}{24}{}};
(6,-2)*{\OriSquare{12}{15}{2}{}{}};
(9,-3)*{\OriSquare{13}{}{}{}{19}};
(10,-3)*{\OriSquare{14}{9}{}{}{20}};
(9,-2)*{\OriSquare{15}{}{}{12}{}};
(10,-2)*{\OriSquare{16}{23}{}{}{}};
(9,-1)*{\OriSquare{17}{}{}{18}{}};
(10,-1)*{\OriSquare{18}{17}{}{}{}};
(9,-0)*{\OriSquare{19}{}{13}{20}{}};
(10,-0)*{\OriSquare{20}{19}{14}{}{}};
(7,-3)*{\OriSquare{21}{}{23}{}{}};
(8,-3)*{\OriSquare{22}{}{24}{}{}};
(7,-4)*{\OriSquare{23}{}{}{16}{21}};
(8,-4)*{\OriSquare{24}{11}{}{}{22}};
(3,-1)*{\OriSquare{25}{}{}{}{27}};
(4,-1)*{\OriSquare{26}{}{}{}{28}};
(3,0)*{\OriSquare{27}{}{25}{}{}};
(4,0)*{\OriSquare{28}{}{26}{}{}};
\end{xy}
\end{minipage}
\vspace{5mm}

\begin{minipage}{10cm}
\begin{xy}
<0.6cm,0cm>:
(4,-5)*{\OriSquare{1}{2}{}{}{7}};
(1,-5)*{\OriSquare{2}{}{}{1}{12}};
(4,-4)*{\OriSquare{3}{4}{}{}{}};
(1,-4)*{\OriSquare{4}{}{}{3}{}};
(0,-3)*{\OriSquare{5}{}{}{6}{11}};
(1,-3)*{\OriSquare{6}{5}{}{}{}};
(0,-2)*{\OriSquare{7}{}{1}{8}{}};
(1,-2)*{\OriSquare{8}{7}{10}{}{}};
(4,-3)*{\OriSquare{9}{}{}{22}{}};
(5,-3)*{\OriSquare{10}{13}{}{}{8}};
(4,-2)*{\OriSquare{11}{}{5}{16}{}};
(5,-2)*{\OriSquare{12}{}{2}{}{}};
(8,-3)*{\OriSquare{13}{}{}{10}{}};
(9,-3)*{\OriSquare{14}{21}{}{}{}};
(8,-2)*{\OriSquare{15}{}{17}{}{}};
(9,-2)*{\OriSquare{16}{11}{18}{}{}};
(8,-5)*{\OriSquare{17}{}{}{18}{15}};
(9,-5)*{\OriSquare{18}{17}{}{}{16}};
(8,-4)*{\OriSquare{19}{}{}{20}{}};
(9,-4)*{\OriSquare{20}{19}{}{}{}};
(6,-1)*{\OriSquare{21}{}{23}{14}{}};
(7,-1)*{\OriSquare{22}{9}{24}{}{}};
(6,-2)*{\OriSquare{23}{}{}{}{21}};
(7,-2)*{\OriSquare{24}{}{}{}{22}};
(2,-5)*{\OriSquare{25}{}{}{}{27}};
(3,-5)*{\OriSquare{26}{}{}{}{28}};
(2,-4)*{\OriSquare{27}{}{25}{}{}};
(3,-4)*{\OriSquare{28}{}{26}{}{}};
\end{xy}
\end{minipage}
\end{center}
\vspace{5mm}

Using Proposition \ref{prop:genus} and Remark \ref{bem:m-ori-punctures}, we see that all of these three M-Origamis of degree $28$ have genus $6$ and $18$ punctures. The interesting fact is that the three dessins admit a weak automorphism lying over $z\mapsto 1-z$, i.e.\ they stay the same after exchanging white and black vertices. By Corollary \ref{kor:sl2-action}, this means that $S$ is contained in each of their Veech groups. $T$ is contained in neither of them, because by the same corollary this cannot happen for nontrivial trees, and so all three have the Veech group generated by $\Gamma(2)$ and $S$, so their Teichmüller curves have genus $0$ with $2$ cusps, and no two of these Teichmüller curves coincide.
\end{bsp}
\begin{bsp}\label{bsp:smaller-curve-moduli}
There is a second non-trivial Galois orbit of genus $0$ dessins of degree $7$:
\vspace{5mm}

\begin{center}
 \includegraphics[scale=0.8]{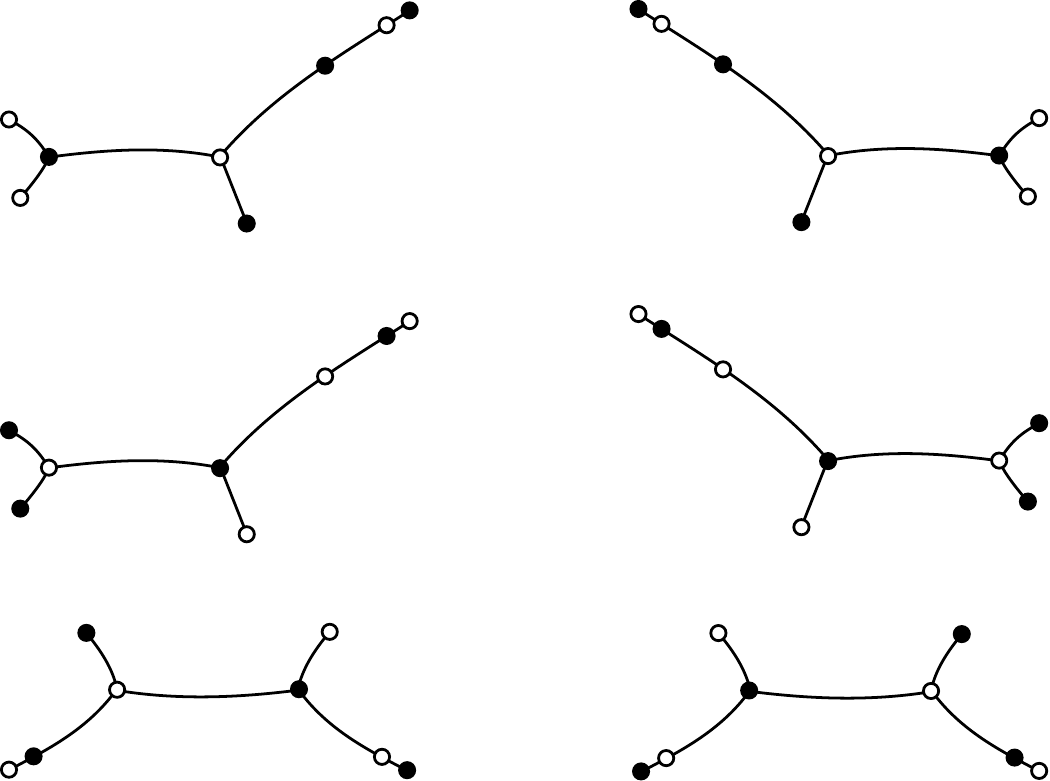}
\end{center}
\vspace{5mm}

From the catalogue we learn that their Belyi polynomials are of the form
\begin{multline*}
\beta(z)=z^3(z-a)^2\bigg(z^2+\Big(\frac{4}{3}a^5-\frac{34}{15}a^4-\frac{26}{15}a^3+\frac{7}{5}a^2+\frac{20}{3}a-\frac{28}{5}\Big)z\\
-\frac{8}{15}a^5-\frac{32}{75}a^4+\frac{172}{75}a^3+\frac{148}{75}a^2-\frac{14}{5}a - \frac{287}{75}\bigg),
\end{multline*}
where $a$ runs through the six complex roots of the following polynomial:
\[20a^6-84a^5+84a^4+56a^3-294a+245\]
As they have the same cycle structure as the three in the Galois orbit discussed in the previous example, the resulting M-Origamis are of course combinatorially equivalent to them: They also are of degree $28$ and genus $6$, and they have $18$ punctures. (We omit to draw them here.) But something is different here: the weak automorphism group of each of these dessins is trivial. To see this, note that an element of the Group $W$ stabilising a tree (that is not the dessin of $z\mapsto z^n$, or $z\mapsto (1-z)^n$) has to fix $\infty$, so it is either the identity or the element $s$ which acts by exchanging the white and black vertices. Indeed, none of these six dessins keep fixed under $s$ (but interestingly the whole orbit does). As these dessins are filthy, we conclude by Theorem \ref{satz:veechgroup} c) that all six corresponding M-Origamis have Veech group $\Gamma(2)$. Furthermore, the two columns of the picture are complex conjugate to each other, and we get the first row from the second by applying $s$---which is also the case for the two dessins from the last row. So what is the situation here? We get three Teichmüller curves, the first two containing the origamis associated to the two upper left and upper right dessins, respectively. They are interchanged by the complex conjugation. The third curve contains the two other origamis, associated to the bottom row, so this Teichmüller curve is stabilised under the action of the complex conjugation, and hence defined over $\RR$.
\end{bsp}

Let us turn away from the Galois action now and concentrate on the possible Veech groups of M-Origamis. As we have learned in Proposition \ref{prop:equivariant-action}, it is closely connected tho the weak automorphism group $W_\beta$ of the underlying dessin. The remaining possibilities (up to conjugation) not discussed in the two previous examples are $W_\beta=W\cong S_3$ and $W_\beta=\langle st\rangle\cong \ZZ/3\ZZ$. We will indeed present non-trivial examples of such dessins. It would be interesting to have a classification of dessins with these weak automorphism groups.

\begin{bsp}\label{bsp:stefan}
Here, we construct an infinite series of M-Origamis with Veech group $\SL_2(\ZZ)$. It is quite noteworthy that only the two simplest origamis in this series are characteristic. An origami $O=(f\colon X^*\to E^*)$ is called \textit{characteristic} if $f_*(\pi_1(X^*))\leq\pi_1(E^*)\cong F_2$ is a characteristic subgroup. For a detailed account of characteristic origamis see \cite{he}.

We begin with the following series of dessins that S.\ Kühnlein came up with:

Let $n\geq 2$. Consider on the set $(\ZZ/n\ZZ)^2$ the following two maps:
\[p_x\colon (k,\,l)\mapsto (k+1,\,l),\:\:p_y\colon (k,\,l)\mapsto(k,\,l+1).\]
They are clearly bijective, so we can regard them as elements of $S_{n^2}$. Note that they commute. Also, as $p_x^kp_y^l(0,\,0)=(k,\,l)$, they generate a transitive subgroup of $S_{n^2}$ and so $(p_x,\,p_y)$ defines a dessin $K_n$ of degree $n^2$. We calculate its monodromy around $\infty$ as $p_z=p_x^{-1}p_y^{-1}\colon (k,\,l)\mapsto (k-1,\,l-1)$ and define
\[c\colon (k,l)\mapsto (l,\,k),\:\:d\colon (k,\,l)\mapsto(-k,\,l-k).\]
As $c^2=d^2=\id$, both $c$ and $d$ are also bijective and thus elements of $S_{n^2}$. Furthermore we easily verify that
\[c^{-1}p_xc=p_y,\,c^{-1}p_yc=p_x,\,d^{-1}p_xd=p_z,\,d^{-1}p_yd=p_y,\]
so we find that $s\cdot K_n\cong t\cdot K_n\cong K_n$ and thus by Definition and Remark \ref{defbem:weak-action}, $W_{K_n}=W$. Before we go on, we calculate the genus of $K_n$. The permutation $p_x$ consists of $n$ cycles (of length $n$), and so do $p_y$ and $p_z$. As its degree is $n^2$, by the Euler formula we get $2-2g(K_n)=2n-n^2+n$ and thus
\[g(K_n)=\frac{n^2-3n+2}{2}.\]
Now, look at the associated M-Origami $O_{K_n}$ of degree $4n^2$. By the considerations above, we know that it has Veech group $\SL_2(\ZZ)$. But since for $n\geq 3$, we have $1\notin\{p_x^2,\,p_y^2,p_z^2\}$, the group of translations cannot act transitively on the squares of $O_{K_n}$ which therefore is not a normal and specifically not a characteristic origami. According to a remark in \cite{he}, it seems as if not many examples are known for non-characteristic origamis with full Veech group $\SL_2(\ZZ)$. Let us close the example by calculating the genus of $O_{K_n}$. We use the formula from Proposition \ref{prop:genus} and therefore we have to count the cycles of even length in $p_x,\,p_y$ and $p_z$. We have
\[g_0=g_1=g_\infty=\begin{cases}n,& n\in 2\ZZ\\0,& n\in 2\ZZ+1\end{cases}\]
and therefore
\[g(O_{K_n})=g(K_n)+n^2-\frac{1}{2}(g_0+g_1+g_\infty)=
\begin{cases}
\frac{3n^2-6n+2}{2},&n\in2\ZZ\\
\frac{3n^2-3n+2}{2},&n\in2\ZZ+1
\end{cases}.\]
\end{bsp}

\begin{bsp}
Consider the following genus $1$ dessin $\beta$:
\vspace{5mm}

\begin{center}
 \includegraphics[scale=0.8]{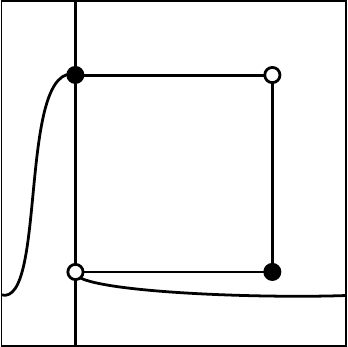}
\end{center}
\vspace{5mm}

Its monodromy is given by $p_x=(1\,3\,4\,5)(2\,6)$ and $p_y=(2\,1\,4\,5)(3\,6)$. We set $c\coloneqq (1\,2\,3)$ and easily check
\[cp_xc^{-1}=p_y,\:cp_yc^{-1}=p_z=p_x^{-1}p_y^{-1},\:cp_zc^{-1}=p_x.\]
It can be easily verified that no permutation in the centraliser of $p_z$ exchanges $p_x$ and $p_y$ by conjugation, so the weak isomporphism group $W_\beta$ is indeed generated by the element $st$ and of order $3$. So we have $\langle\Gamma(2),ST\rangle\subseteq\Gamma(O_\beta)$. Since $\beta$ is filthy, we can conclude as in the proof of Theorem \ref{satz:veechgroup} c) that $O_\beta\ncong S\cdot O_\beta$, so indeed we have $\Gamma(O_\beta)=\langle\Gamma(2),ST\rangle.$
\end{bsp}

Considering all four examples (plus the $\SL_2(\ZZ)$-orbit of the second one) we thus get:
\begin{prop}
Every congruence subgroup of $\SL_2(\ZZ)$ of level $2$ appears as the Veech group of an M-Origami.
\end{prop}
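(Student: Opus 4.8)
The plan is to reduce the statement to a short bookkeeping argument resting on the classification of congruence subgroups of level $2$ together with the examples already constructed in this section. Recall that a congruence subgroup of $\SL_2(\ZZ)$ of level $2$ is, by definition, a subgroup containing $\Gamma(2)$, and that $\Gamma(2)$ is normal with $\SL_2(\ZZ)/\Gamma(2)\cong\PSL_2(\ZZ/2\ZZ)\cong S_3$ (note $-I\in\Gamma(2)$). So these subgroups are in bijection with the subgroups of $S_3$, of which there are six: $\Gamma(2)$ itself (index $6$); the unique normal subgroup $\langle\Gamma(2),ST\rangle$ of index $2$, corresponding to $A_3\leq S_3$ since the image of $ST$ in $S_3$ has order $3$; the three pairwise conjugate subgroups of index $3$, with representative $\langle\Gamma(2),S\rangle$; and $\SL_2(\ZZ)=\langle\Gamma(2),S,T\rangle$.

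Next I would observe that it suffices to realise one subgroup from each of these four conjugacy classes. Indeed, by Theorem \ref{satz:veechgroup} b) (equivalently Corollary \ref{kor:sl2-action}) the $\SL_2(\ZZ)$-orbit of an M-Origami again consists of M-Origamis, and by Theorem \ref{satz:gabi-vg} b) the Veech group of $A\cdot O$ is $A\,\Gamma(O)\,A^{-1}$; hence as soon as a level-$2$ congruence subgroup occurs as the Veech group of an M-Origami, so does every conjugate of it (which again contains the normal subgroup $\Gamma(2)$). In particular, all three conjugate index-$3$ subgroups are realised once one of them is.

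It then remains to match each conjugacy class to an example constructed above: $\SL_2(\ZZ)$ is the Veech group of $O_{K_n}$ from Example \ref{bsp:stefan} (where $W_{K_n}=W$); the index-$2$ group $\langle\Gamma(2),ST\rangle$ is the Veech group of the M-Origami attached to the genus-$1$ dessin of the last example ($W_\beta=\langle st\rangle\cong\ZZ/3\ZZ$, $\beta$ filthy); the index-$3$ group $\langle\Gamma(2),S\rangle$ is the Veech group of the M-Origamis of the first example (there the three dessins have $W_\beta=\langle s\rangle\cong\ZZ/2\ZZ$), its conjugates appearing in the $\SL_2(\ZZ)$-orbit; and $\Gamma(2)$ is the Veech group of the M-Origamis of Example \ref{bsp:smaller-curve-moduli} ($\beta$ filthy with $W_\beta$ trivial, so Theorem \ref{satz:veechgroup} c) applies). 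Since this list exhausts the congruence subgroups of level $2$, the proposition follows. The substantive part of the argument is the construction of the examples, which is already in place; what is left — the count of subgroups of $S_3$ and the conjugacy reduction — is routine, so there is no serious obstacle beyond keeping the convention that "level $2$" also admits $\SL_2(\ZZ)$ itself.
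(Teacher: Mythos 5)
Your proposal is correct and follows essentially the same route as the paper, which likewise obtains the proposition by combining the four examples (Veech groups $\langle\Gamma(2),S\rangle$, $\Gamma(2)$, $\SL_2(\ZZ)$ and $\langle\Gamma(2),ST\rangle$) with the $\SL_2(\ZZ)$-orbit to pick up the conjugate index-$3$ subgroups; your added bookkeeping via the subgroups of $\SL_2(\ZZ)/\Gamma(2)\cong S_3$ just makes this explicit. The only small slip is the citation: the fact that the Veech group of $A\cdot O$ is $A\,\Gamma(O)\,A^{-1}$ is Proposition \ref{prop:ori-veechgroup}~b), not Theorem \ref{satz:gabi-vg}~b), while the fact that $A\cdot O_\beta$ is again an M-Origami is indeed Corollary \ref{kor:sl2-action}.
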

\printbibliography
\end{document}